\documentclass[a4paper,10pt]{article}
\usepackage{amsmath,amssymb,amsthm,authblk}

\usepackage[utf8]{inputenc}
\usepackage[T1]{fontenc}
\usepackage{fancyhdr}
\usepackage{textcomp}
\usepackage[dvipsnames]{xcolor}
\usepackage{graphicx, epstopdf}
\usepackage{braket,amsfonts}
\usepackage{dsfont}
\usepackage{mathtools}
\usepackage{mwe}
\usepackage[top=2.4cm,bottom=2.4cm,left=2.4cm,right=2.4cm]{geometry}
\usepackage{enumitem}

\usepackage{hyperref}

\hypersetup{colorlinks=true,linkcolor=red,filecolor=blue,citecolor=blue,urlcolor=teal}

\usepackage[sort,nocompress]{cite}
 
\newcommand{\uvect}{\mathbf{u}}

\newcommand{\m}{\mathbf{m}} 

\DeclareMathOperator{\tr}{\textnormal{tr}}

\newcommand\Ss{\mathbb{S}}

\newcommand{\boldphi}{\boldsymbol{\phi}}

\renewcommand{\rho}{\varrho}

\newcommand{\R}{\mathbb{R}}
\newcommand{\Z}{\mathbb{Z}}
\newcommand{\N}{\mathbb{N}}
\newcommand{\vareps}{\varepsilon}
\newcommand{\ueps}{\uvect_{\varepsilon}}
\newcommand{\tueps}{\widetilde{\uvect}_{\varepsilon}}
\newcommand{\qeps}{q_{\varepsilon}}
\newcommand{\tqeps}{\tilde{q}_{\varepsilon}}
\newcommand{\fxe}{\frac{x}{\varepsilon}}
\newcommand{\Beps}{\mathcal{B}_{\varepsilon}}
\newcommand{\tBeps}{\widetilde{\mathcal{B}}_{\varepsilon}}

\newcommand{\feps}{f_{\varepsilon}}
\newcommand{\tfeps}{\tilde{f}_{\varepsilon}}
\newcommand{\veps}{v_{\varepsilon}}
\newcommand{\weps}{w_{\varepsilon}}
\newcommand{\rhoeps}{\rho_{\varepsilon}}
\newcommand{\trhoeps}{\widetilde{\rho}_{\varepsilon}}
\newcommand{\oeps}{\Omega_{\varepsilon}}

\newcommand{\geps}{\Gamma_{\varepsilon}}
\newcommand{\phieps}{\boldphi_{\varepsilon}}
\newcommand{\psieps}{\psi_{\varepsilon}}
\newcommand{\bftx}{\left(t,x,\frac{x}{\vareps}\right)}
\newcommand{\per}{\mathrm{per}}

\renewcommand{\div}{\mathrm{div}}
\newcommand{\B}{\mathcal{B}}
\newcommand{\Reps}{R_{\varepsilon}}

\newcommand{\Myf}{\mathcal{M}_{Y_f}}

\newcommand{\teps}{\mathcal{T}_{\vareps}}
\newcommand{\foe}{\frac{1}{\varepsilon}}

\newcommand{\ratsw}[1]{\xrightharpoonup[]{2,#1}}
\newcommand{\ratss}[1]{\xrightarrow[]{2,#1}}

\DeclareMathOperator{\weak}{\textnormal{weak}}

\newtheorem{theorem}{Theorem}[section] 
\newtheorem{proposition}[theorem]{Proposition}
\newtheorem{lemma}[theorem]{Lemma} 
\newtheorem{corollary}[theorem]{Corollary} 
\newtheorem{remark}[theorem]{Remark} 
\newtheorem{definition}[theorem]{Definition}

\title{Homogenization of the compressible Navier-Stokes equations \\ via two-scale convergence in perforated domains}

\author[1]{Markus Gahn
	\thanks{E-mail Id: \href{mailto: markus.gahn@uni-a.de}{markus.gahn@uni-a.de}.}}
\author[2]{Kuntal Bhandari \thanks{E-mail Id: \href{mailto: bhandari@math.cas.cz}{bhandari@math.cas.cz}.}}
\author[2]{\v{S}\'{a}rka Ne\v{c}asov\'{a} \thanks{E-mail Id: \href{mailto: matus@math.cas.cz}{matus@math.cas.cz} (corresponding author).}}
\author[3]{Maria Neuss-Radu \thanks{E-mail Id: \href{maria.neuss-radu@math.fau.de}{maria.neuss-radu@math.fau.de}.}}

\affil[1]{{\normalsize Augsburg University, Institute of Mathematics, Universit\"{a}tsstra{\ss}e 14,
86159 Augsburg, Germany} \vspace{.1cm} }
\affil[2]{{\normalsize Institute of Mathematics of the  Czech Academy of Sciences, \v{Z}itn\'{a} 25, 11000 Praha 1, Czech Republic} \vspace{.1cm}}
\affil[3]{{\normalsize Friedrich-Alexander-Universit{\"{a}}t Erlangen-N{\"{u}}rnberg, Cauerstr. 11, 91058 Erlangen, Germany}}

\date{}

\begin{document}

\maketitle

\begin{abstract}

We study the homogenization of the compressible isentropic Navier-Stokes equations in periodically perforated domains where the size of the obstacles is of the same order as the distance between neighboring obstacles. Using the two-scale convergence method, which can be characterized via the unfolding operator, we derive the corresponding macroscopic model determined by Darcy's law. In particular, the macroscopic density satisfies the porous medium equation. The main challenge lies in identifying the pressure term in the limit. We overcome this by establishing the strong two-scale convergence of the densities, which is achieved by controlling the oscillation defect measure of the unfolded densities. A crucial contribution of our work is the development of a methodological framework applicable to more complex compressible fluid models. Furthermore, regarding conservative forces, we extend existing results from the literature to adiabatic constants $\gamma > \frac95$.
  
\end{abstract}

\vspace{.4cm}
	
\noindent {\bf Keywords:} Homogenization, compressible Navier-Stokes equations, perforated domain, two-scale convergence, unfolding operator, Darcy's law, porous medium equation.
	
\vspace{.2cm}	
\noindent {\bf 2020 Mathematics Subject Classification.} 35B27, 76M50, 76N06.

\section{Introduction}

Homogenization problems in fluid mechanics address the study of the asymptotic behavior of fluid flows in domains containing a large number of holes or obstacles. These domains are commonly referred to as perforated domains, with porous media being a typical example. The aim is to describe the effective behavior of the flow as the number of obstacles tends to infinity while their characteristic size simultaneously tends to zero, leading to the emergence of macroscopic models that capture the influence of the underlying microstructure. 

The rigorous mathematical study of the homogenization   process for stationary incompressible fluids started from the work by Tartar \cite{tartar1980incompressible}.
A systematic study of stationary Stokes and Navier-Stokes flows in perforated domains was carried out by  Allaire in \cite{Allaire-ARMA-1990-I, Allaire-ARMA-1990-II}. He considered fluid flow in domains perforated by a family of holes of varying sizes. More precisely, the holes are assumed to have diameter of order $O(\vareps^\alpha)$, where $\vareps$ denotes the typical distance between them. 
In the three-dimensional  setting, he showed that the asymptotic behavior of the fluid depends critically on the value of $\alpha$.
When, $\alpha<3$,
 the effective behavior is governed by Darcy’s law. In the critical case $\alpha=3$, the limit yields the Brinkman law. Finally, when 
$\alpha>3$ the perforations have no influence in the limit, and the homogenized system remains the same as the original Stokes or 
Navier-Stokes equations. The homogenization results have also been extended to more complex fluid models. Some examples for this would include evolutionary incompressible Navier-Stokes equations, investigated, for instance, in the works of Mikelić \cite{Mikelic-1991-ANPA}, Feireisl et al. \cite{Feireisl-Necasova-Namyeyeva},  Lu and Wang \cite{Lu-yang-JMFM}, or studies addressing more general (random) particle configurations and inhomogeneous Dirichlet boundary conditions: Giunti and H\"ofer \cite{Hofer-Giunti}, Hillairet et al. \cite{Hillairet-Sueur-Ayman}, H\"ofer and Jansen \cite{Hofer-Jonas}, very large Reynolds numbers: H\"ofer  \cite{Hofer-Nonlinearity}, and non-Newtonian fluids: Lu and Qian \cite{Lu-Qian-JDE}.

Turning to the case of compressible fluids, homogenization problems for the stationary compressible Navier-Stokes equations in three spatial dimensions were studied by Feireisl and Lu \cite{Feireisl-Lu-JMFM-3D}, and in the two-dimensional setting by Nečasová and Pan \cite{Necasova-Pan-M2AS}. For the non-stationary compressible Navier-Stokes equations, such problems are studied by Lu and Schwarzacher \cite{Lu-Schwarzacher}, Ne\v{c}asov\'{a} and Oschmann \cite{Necasova-Oschmann-CVPDE} and  Oschmann and Pokorn\'{y} \cite{Pokorny-Oschmann-JDE}, while the case of randomly distributed holes is addressed in \cite{Oschmann-BElla-ARMA} by Bella and Oschmann.  Furthermore, homogenization problems for heat-conducting compressible fluids, namely the full 
Navier-Stokes-Fourier system, have been analyzed in the works of Lu and Pokorn\'{y} \cite{Lu-Pokorny-JDE}, Oschmann \cite{Oschman-JMFM} and, Pokorn\'{y} and  Skříšovský \cite{Pokorny-Emil-JEPE}.
 In all these works on compressible fluids, the analysis is carried out in the regime of tiny holes, for which the limiting equations remain unchanged.

In contrast, for the case of large perforations or obstacles  whose size is proportional to their mutual distance, the first homogenization result was achieved by Masmoudi \cite{masmoudi2002homogenization}. In this regime, the limiting equations differ from the original ones, with Darcy’s law emerging as the effective model and the Darcy pressure being described by a porous medium equation. This result was later extended to the Navier-Stokes-Fourier system by Feireisl et al. \cite{feireisl2010homogenization}. 
Very recently, a quantitative homogenization of the compressible Navier-Stokes equations was studied in  \cite{hofer2025quantitative} by H\"ofer et al, considering a domain in the same setting as in \cite{masmoudi2002homogenization}. 

The present paper is also devoted to studying  the homogenization of a viscous compressible isentropic flow, described by the compressible Navier-Stokes equations, in a  periodically perforated domain, where the  diameter
of the obstacles is of the same order as the distance between neighboring obstacles. Our approach is based on the so-called   \textit{two-scale convergence} method  which was initially introduced by Nguetseng \cite{Nguetseng} and later  developed by Allaire \cite{Allaire_TwoScaleKonvergenz}. 
This method provides a rigorous framework for justifying homogenization results, as it captures both macroscopic and microscopic scales simultaneously. It has been successfully applied in numerous studies on homogenization.
A characterization of the two-scale convergence can be given via the unfolding operator,   first introduced by Vogt \cite{VogtHomogenization} and later further investigated by Arbogast et al. \cite{ArbogastDouglasHornung} and Bourgeat et al. \cite{BourgeatLuckhausMikelic} (using the name dilation operator). In more detail, this operator was later studied under the name unfolding operator for example in \cite{CioranescuDamlamianDonatoGrisoZakiUnfolding,Cioranescu_Unfolding2} by Cioranescu and coauthors.  We also refer to the monograph \cite{CioranescuGrisoDamlamian2018} (also by  Cioranescu et al.) for a detailed overview on this topic. More precisely, the unfolding operator maps functions defined on a perforated ($\vareps$-dependent) domain to functions defined on a fixed domain, and depending on a macroscopic and a microscopic variable. The dependence on two variables, where the unfolded sequence is a piecewise constant function with respect to the macroscopic variable, is the price to pay when dealing with the unfolded sequence. The primary advantage of the unfolding operator in its suitability for dealing with nonlinear problems, such as in our case the compressible Navier-Stokes equations, where the strong two-scale convergence of the density is required. This convergence is equivalent to the strong $L^p$-convergence of the corresponding unfolded sequence.

To the best of our knowledge, the homogenization of the compressible Navier-Stokes equations via two-scale convergence method (or equivalently via the unfolding method) has not yet been addressed in the literature. In this work, we employ this method to derive the corresponding macroscopic model, where the microscopic domain is assumed to be periodically perforated (and the size of the obstacles is of the same order as  their mutual distance). 
The contribution of this paper is twofold: First, we provide a general framework based on two-scale convergence for the homogenization of the compressible Navier-Stokes equations. This approach is designed to accommodate more complex settings, such as coupling with auxiliary processes or more complex geometries. To this end, we offer a rigorous, detailed proof of each step in the homogenization process. Second, we extend existing results from the literature \cite{masmoudi2002homogenization, feireisl2010homogenization} to adiabatic constants $\gamma > \frac{9}{5}$. We note, however, that obtaining $\vareps$-uniform a priori bounds on the fluid velocity requires restricting the momentum equation to conservative forces. For more general forces, we further restrict our analysis to the case $\gamma \geq 2$, consistent with \cite{feireisl2010homogenization}.

We start our homogenization process with uniform a priori estimates, which are based on the energy inequality for a solution of the compressible Navier-Stokes equation. From these estimates, weak compactness results in the two-scale sense for the fluid velocity, the fluid pressure and the density are obtained directly from two-scale compactness results. These convergence results allow to pass to the limit in the linear terms in our system. The crucial point now is the identification of the adiabatic pressure term in the limit, for which we need strong two-scale convergence of the density or equivalently  strong $L^p$-convergence for the associated unfolded sequence. For this, we control the so-called oscillation defect measure for the unfolded density. The oscillation defect measure was used in \cite{feireisl2001compactness} by Feireisl (see also the paper by Jiang and Zhang \cite{jiang2001spherically}) to show strong compactness for solutions of the isentropic compressible 
Navier-Stokes equations, and in \cite{feireisl2001existence} by Feireisl et al.  to establish the existence of weak solutions for this  system in the case $\gamma >\frac32$. In the context of homogenization, it was used in \cite{feireisl2010homogenization} to the density extended by its local mean in every micro-cell. Here, we estimate the oscillation defect measure of the unfolded density, for which we in particular use an improved pressure estimate. Again, we present the proof in great detail, to make this method applicable to more complex problems. Based on this strong two-scale convergence for the density to a macroscopic density $\rho_0$, we are able to pass to the limit $\vareps \to 0$ in the micro-model. We obtain on the set $\{\rho_0>0\}$ an incompressible Stokes-cell problem for the limit fluid velocity. Finally, this allows to derive via the continuity equation  a porous medium equation for the macroscopic density $\rho_0$.

\vspace{.3cm}
\noindent 
{\bf Notations.} Let us introduce some basic notations   which will be used throughout this paper. 
For $m\in \N$, $p\in [1,\infty]$ and $U\subset \R^m$, we denote the usual Lebesgue- and Sobolev-spaces by $L^p(U)$ and $W^{1,p}(U)$, respectively.  For $p=2$ we shortly write $H^1(U) := W^{1,2}(U)$.  For Sobolev functions with zero traces on $\partial U$ (for a Lipschitz-domain $U$), we use the notation $W^{1,p}_0(U)$ and $H_0^1(U)$ for $p=2$. If $U$ is bounded, we define 
\begin{align*}
    L_0^2(U):= \left\{ \phi \in L^2(U) \, : \, \int_{U} \phi dx = 0 \right\}.
\end{align*}
For a Banach space $X$, we denote the cartesian product by $X^n$ for $n\in \N$, and for its norm we usually skip the exponent. For example, we write $\|\cdot\|_{L^2(U)} := \|\cdot\|_{L^2(U)^n}$. Further, we denote the usual Bochner-spaces by $L^p((0,T),X)$, and by $W^{1,p}((0,T),X)$ the space of functions in $L^p((0,T),X)$ with generalized derivatives in $L^p((0,T),X)$. Then, $W^{1,p}_0((0,T),X)$ is the space of functions having zero traces in $t=0$ and $t = T$. Again, we use for $p=2$ the notations $H^1((0,T),X)$ and $H^1_0((0,T),X)$.  Further, the space $C^0([0,T], X_{\mathrm{weak}})$ denotes the set of all continuous functions from $[0,T]\to X$ with respect to the weak topology in $X$.

For $Y= (0,1)^n$ and $p\in [1,\infty]$, we denote by $W^{1,p}_{\per}(Y)$ the space of $Y$-periodic Sobolev functions, defined as the closure of $Y$-periodic functions from $C^{\infty}(\R^n) $ with respect to the norm in $W^{1,p}(Y)$. For $p=2$, we write $H_{\per}^1(Y)$.

\vspace{.3cm}
\noindent
{\bf Paper organization.}  In Section \ref{sec:micro_model_main_result} the microscopic model and the main result are formulated. In Section \ref{sec:apriori_estimates}, we obtain some important a priori estimates which are relevant for this work. In particular, we obtain an  improved pressure estimate using the  uniform estimates of the Bogovskii operator for perforated domains, and a suitable  pressure decomposition which is crucial  for the two-scale convergence of the pressure term $p(\rhoeps)$. In Section \ref{sec:compactness}, we introduce the two-scale convergence method, provide the weak two-scale convergence results for the microscopic solutions, and finally prove the strong two-scale convergence for the microscopic densities. Thereafter, in Section \ref{sec:derivation_macro_model}, we derive  the  macroscopic model based the two-scale convergence results. Finally, in Appendix \ref{sec:appendix}, we collect several auxiliary results including some uniform estimates of Bogovskii operator with respect to the perforation parameter.

\section{The microscopic model and the main result}
\label{sec:micro_model_main_result}

In this section we introduce the microscopic geometry, formulate the microscopic equation and give the definition of a weak solution. Further, we formulate the main result including the macroscopic model.

\subsection{Microscopic model}

We consider a periodically perforated domain $\oeps \subset \Omega \subset \R^3$ with $\Omega = (a,b)$ and $a,b\in \Z^3$ such that $a_i < b_i $ for $i=1,2,3$ (basically $a=(a_1,a_2,a_3)$ and $b=(b_1,b_2,b_3)$).  Further, let $0 <\vareps \ll 1$ with $\vareps^{-1} \in \N$. Let $Y_s \subset Y:= (0,1)^3$ strictly included. We define $\Gamma:= \partial Y_s$  and $Y_f := Y \setminus \overline{Y_s}$. Now, we define the perforated domain $\oeps$ and the surface $\geps$ by
\begin{align*}
\oeps &:= \mathrm{int} \left\{\bigcup_{k\in K_{\vareps}} \vareps (\overline{Y_f} + k) \right\} = \Omega \setminus \bigcup_{k \in \Z^3} \vareps (\overline{Y_s} + k),
\\
\geps &:= \bigcup_{k\in K_{\vareps}} \vareps ( \Gamma + k) = \partial \oeps \setminus \partial \Omega,
\end{align*}
where the set $K_{\vareps} \subset \Z^3$ is defined by
\begin{align*}
K_{\vareps}:= \left\{ k \in \Z^3 \, : \, \vareps(Y + k) \subset \Omega \right\}.
\end{align*}
Now, we consider the following compressible Navier-Stokes equations: 
\begin{subequations}\label{micro_model}
\begin{align}
\partial_t \rhoeps + \div(\rhoeps \ueps) &= 0 &\mbox{ in }& (0,T)\times \oeps,
\\
\label{eq:momentum_equation_micro_model}
\vareps^{\lambda}\left( \partial_t (\rhoeps \ueps )  + \div(\rhoeps \ueps \otimes \ueps) \right)- \vareps^2 \div(\Ss(\nabla \ueps)) + \nabla p(\rhoeps) &= {\rhoeps \nabla F} &\mbox{ in }& (0,T)\times \oeps,
\\
\ueps &= 0 &\mbox{ on }& (0,T)\times \partial\oeps,
\\
\rhoeps(0) &=\rho_{\vareps,0} &\mbox{ in }& \oeps,
\\
(\rhoeps \ueps)(0) &= \m_{\vareps,0} &\mbox{ in }&\oeps,
\end{align}
\end{subequations}
with $\lambda > 1 + \frac{3}{\gamma}$, and $F$ is some suitable potential which is smooth enough.  
Here, we define for $s \geq 0$ the pressure term  as 
\begin{align*}
p(s):= a s^{\gamma}
\end{align*}
with $a>0$ and $\gamma >\frac95$, and for a matrix $A\in \R^{3\times 3}$ we define $\Ss$ via
\begin{align*}
\Ss(A):= \mu \left( A + A^\top - \frac23 \tr(A) \mathbb I_3 \right) + \eta \tr (A) \mathbb I_3
\end{align*}
with viscosity coefficients  $\mu >0$ and $\eta \geq 0$.

\begin{remark}
The specific choice of the force term $\nabla F$ in the momentum equation $\eqref{eq:momentum_equation_micro_model}$ via a potential $F$ is necessary to obtain uniform energy estimates with respect to $\vareps$, see Proposition \ref{prop:apriori_basic}. For $\gamma \geq 2$ we can replace $\nabla F$ with a general force $\mathbf{f}\in L^{\infty}((0,T)\times \Omega)^3$, see Remark \ref{rem:general_force} for more details.
\end{remark}

\begin{definition}[Weak solutions]\label{Def:weak_sol}
We say that the pair $(\rhoeps ,\ueps)$ is a  finite energy weak solution of the system $\eqref{micro_model}$ if for given  data
\begin{align}\label{Assump-data}
 \rho_{\vareps,0} \geq 0, \ \int_{\Omega_\vareps} \rho^\gamma_{\vareps,0} \, dx  \leq  c , \ \    
\int_{\Omega_\vareps} \vareps^{\lambda} \frac{|\mathbf m_{\vareps, 0}|^2}{\rho_{\vareps,0}} \, dx \leq c ,    \  \mathbf m_{\vareps,0}=0 \ \text{if} \ \rho_{\vareps,0}=0  , \ \ F\in W^{1,\infty}((0,T)\times \Omega_\vareps) , 
\end{align}
we have the following: 
\begin{itemize}
\item It holds that
\begin{align*}
 &\rhoeps \geq 0 \, \text{ a.e. in } \, (0,T)\times \oeps, \ \   \rhoeps\in C^0([0,T], L^\gamma_{\weak}(\oeps)), \\
&\ueps\in L^2((0,T), W^{1,2}_0(\oeps)), \  \   \rhoeps\ueps\in C^0([0,T],  L^{\frac{2\gamma}{\gamma+1}}_{\weak}(\oeps ) ) .
\end{align*}

\item For any $0 \le \tau \le T$ and any $\psieps \in C_0^{\infty}([0,T)\times \overline{\oeps})$ it holds that
\begin{align}\label{eq:weak_mass_balance}
\int_0^{\tau} \int_{\oeps} \rhoeps \partial_t \psieps + \rhoeps \ueps \cdot \nabla \psieps dx dt = \int_{\oeps} \rhoeps(\tau) \psieps(\tau) dx - \int_{\oeps} \rho_{\vareps,0} \psieps(0) dx.
\end{align}

\item For every $0 \le \tau \le T$ and $\phieps \in C_0^{\infty}([0,T)\times \oeps)^3$,   it holds that
\begin{align}\label{eq:weak_impulse}
\int_0^{\tau} \int_{\oeps} \Big[\vareps^{\lambda} \rhoeps \ueps \cdot \partial_t \phieps + \vareps^{\lambda}\rhoeps \ueps \otimes \ueps : \nabla \phieps - \vareps^2 \mathbb{S}(\nabla \ueps) : \nabla \phieps + p(\rhoeps) \div \phieps \notag \\
+ \rhoeps \nabla F \cdot \phieps \Big] dx dt 
= \int_{\oeps} \vareps^{\lambda}(\rhoeps \ueps)(\tau) \phieps(\tau) dx - \int_{\oeps} \vareps^{\lambda} \m_{\vareps,0} \phieps(0) dx.
\end{align}

\item The following  energy inequality holds
\begin{align}\label{eq:energy-ineq}
&\int_{\oeps}  \left( \frac12 \vareps^\lambda  \rhoeps |\ueps|^2 + \frac{p(\rhoeps)}{\gamma - 1}  \right)(\tau) dx  + \int_0^\tau \int_{\oeps} \vareps^2  \Ss(\nabla \ueps) : \nabla \ueps dx dt  
\notag \\
&\le \int_{\oeps} \left( \vareps^{\lambda}\frac{ |\m_{\vareps,0}|^2}{2 \rho_{\vareps,0}} + \frac{p(\rho_{\vareps,0})}{\gamma -1} \right)dx + \int_0^\tau \int_{\oeps} \rhoeps \nabla F \cdot \ueps dx dt 
\end{align}
for a.a.  $0 \le \tau \le T$. 

\item Finally, the zero extension $(\trhoeps,\tueps)$ satisfies 
\begin{align}\label{eq:renormalized_continuity_equation}
\partial_t b(\trhoeps) + \div(b(\trhoeps) \tueps) + (\trhoeps b'(\trhoeps) - b(\trhoeps)) \div \tueps &= 0 
 \ \text{ in } \ \mathcal{D}'((0,T)\times \R^3), 
\end{align}
for any $b \in C^0([0,\infty)) \cap C^1((0,\infty))$ with the growth condition $|b'(z)z|\le C (z^{\omega} + z^{\frac{\gamma}{2}})$ for $z>0$ and $\omega \in (0, \frac{\gamma}{2})$.
\end{itemize}
\end{definition}

\subsection{Main result}

We are now in the position to state the main result of this paper. 

\begin{theorem}
Let $\gamma>\frac{9}{5}$ and $\lambda > 1 + \frac{3}{\gamma}$. Furthermore, let  $(\rhoeps, \ueps)$ be a weak global-in-time  solution to the microscopic model \eqref{micro_model}. Then, the sequence $(\rhoeps, \ueps)_{\vareps}$ converges in the two-scale sense to a limit function $(\rho_0,\uvect_0)$ with $\rho_0\in L^\infty((0,T),L^\gamma(\Omega \times Y_f))\cap L^{\gamma+\nu}((0,T)\times \Omega \times Y_f)$ (with $\nu=\frac{2}{3}\gamma-1$) and $\uvect_0\in L^2((0,T)\times \Omega, H^1(Y))^3$ with $\uvect_0=0$ in $Y_s$, and there exists $p_1\in L^2((0,T),L^q(\Omega,L^2(Y_f)))$ with $q=\min\{2,\gamma\}$ such that:  $(\uvect_0, p_1)$ satisfies   \begin{align*} 
    -2\mu \div_y (D_y( \uvect_0)) + \nabla_y p_1 &= \rho_0\nabla F -  \nabla 
    p_0 &\mbox{ in }& \{\rho_0 > 0\}  \times Y_f,
\\
\div_y( \uvect_0) &= 0 &\mbox{ in }&  \{\rho_0 > 0\}  \times Y_f,
\\
\uvect_0 &= 0 &\mbox{ on }& \{\rho_0 > 0\}  \times \Gamma,
\\
(\uvect_0,p_1) \mbox{ are } Y\mbox{-periodic} ,
\end{align*}
with $p_0=a\rho_0^\gamma$, and $\rho_0$ satisfies 
\begin{align}
\begin{aligned}\label{eq:porous_media_rho0}
|Y_f|\partial_t \rho_0 + \div_x \left(K \rho_0^2 \nabla F - a\rho_0\nabla \rho_0^{\gamma}\right) = 0 & \qquad  \mbox{in } (0,T)\times \Omega, \\
\rho_0(0) = |Y_f| \rho_{0,0} & \qquad \mbox{in } \Omega. 
\end{aligned}
\end{align}
\end{theorem}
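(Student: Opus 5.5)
The proof follows the structure announced in the introduction: uniform a priori estimates, weak two-scale compactness, strong two-scale convergence of the densities, and passage to the limit in the weak formulation.

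\textbf{Step 1: a priori estimates.} From the energy inequality \eqref{eq:energy-ineq} with the conservative force $\nabla F$, we write $\int\rhoeps\nabla F\cdot\ueps = \int\rhoeps\ueps\cdot\nabla F$. Using the continuity equation, this equals $\frac{d}{dt}\int\rhoeps F$ minus $\int\rhoeps\partial_t F$. This yields uniform bounds on $\sup_t\|\rhoeps\|_{L^\gamma}$, on $\vareps^{\lambda}\rhoeps|\ueps|^2$ in $L^\infty L^1$, and on $\vareps\|\nabla\ueps\|_{L^2}$. The Poincaré inequality in $\oeps$, $\|\ueps\|_{L^2}\le C\vareps\|\nabla\ueps\|_{L^2}$, then gives $\|\ueps\|_{L^2}+\vareps\|\nabla\ueps\|_{L^2}\le C$.

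Next we derive an improved pressure estimate. We test the momentum equation with $\Beps[\rhoeps^{\nu}-\text{mean}]$, where $\Beps$ is the Bogovskii operator on $\oeps$ with $\vareps$-uniform bounds $\|\nabla\Beps f\|_{L^r}\le C\vareps^{-1}\|f\|_{L^r}$ (Appendix). The restriction $\lambda>1+3/\gamma$ is used here to absorb the convective and time-derivative terms, which carry the factor $\vareps^\lambda$. This gives $\rhoeps\in L^{\gamma+\nu}$ uniformly, with $\nu=\frac23\gamma-1$, and $\gamma>9/5$ makes the exponents close. Finally, we decompose $p(\rhoeps)$ via the extension of the pressure (à la Tartar/Lipton–Avellaneda) into a part that is bounded in $L^2L^q$ and a part of order $\vareps$.

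\textbf{Step 2: weak two-scale limits and linear passage to the limit.} Two-scale compactness gives the following limits:
\begin{itemize}
\item $\ueps\ratsw{2}\uvect_0$ and $\vareps\nabla\ueps\ratsw{2}\nabla_y\uvect_0$, with $\uvect_0=0$ in $Y_s$;
\item $\rhoeps\to\rho_0(t,x,y)$ weakly two-scale in $L^{\gamma+\nu}$;
\item $p(\rhoeps)\to\overline{p}$ weakly two-scale.
\end{itemize}
Testing the continuity equation with $\psi(t,x)+\vareps\psi_1(t,x,x/\vareps)$ gives $\div_y(\rho_0\uvect_0)=0$ and $\nabla_y\rho_0=0$. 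For the latter, we test the momentum equation with $\vareps\phi(t,x,x/\vareps)$; since the viscous and inertial terms vanish, we obtain $\nabla_y\overline{p}=0$ in $Y_f$. Oscillating test functions with $\div_y\phi=0$ then give the cell Stokes system with a macroscopic pressure gradient $\nabla_x\overline p$ and a corrector $p_1$ recovered by de Rham. The inertial terms vanish because of the factor $\vareps^\lambda$ together with the $L^{\gamma+\nu}$ and kinetic energy bounds.

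\textbf{Step 3 (main obstacle): strong two-scale convergence of $\rhoeps$.} It remains to identify $\overline p=a\rho_0^\gamma$. For this we prove $\teps(\rhoeps)\to\rho_0$ strongly in $L^1$, which is equivalent to strong two-scale convergence. Following Feireisl, we work with truncations $T_k$ and the oscillation defect measure $\limsup\|T_k(\teps\rhoeps)-T_k(\rho_0)\|_{L^{\gamma+1}}$. We first establish an effective viscous flux identity for the unfolded quantities. For this we test the momentum equation with $\Beps$-type (or $\nabla\Delta^{-1}$ localized in cells) functions of $T_k(\rhoeps)$, and we use that the velocity is of order $\vareps$ in gradient scaling so that the pressure dominates. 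This yields the weak-limit identity
\[
\overline{p(\rho)T_k(\rho)}-\overline p\,\overline{T_k(\rho)}=\mu'\left(\overline{T_k(\rho)\div\uvect}-\overline{T_k(\rho)}\div\uvect\right),
\]
and the right-hand side vanishes in the limit since $\vareps\nabla\ueps$ is bounded while the identity is scaled by $\vareps$.

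With this identity, we apply the renormalized continuity equation \eqref{eq:renormalized_continuity_equation} with $b=L_k(z)=z\int_1^z T_k(s)/s^2\,ds$, both for $\trhoeps$ and for the limit. The limit satisfies the renormalized equation thanks to the oscillation bound together with $\gamma+\nu>2$-type integrability. This gives $\frac{d}{dt}\int(\overline{L_k(\rho)}-L_k(\rho_0))\le0$ up to terms that vanish as $k\to\infty$. Starting from the initial data, which we assume converge strongly after unfolding, we obtain $\overline{\rho\log\rho}=\rho_0\log\rho_0$ and hence strong convergence. I expect the hardest part to be controlling the commutator and boundary terms that come from unfolding being piecewise constant in $x$; the Bogovskii estimates and the improved integrability are needed exactly there.

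\textbf{Step 4: porous medium equation.} With $\overline p=a\rho_0^\gamma=p_0$, we solve the cell problem on $\{\rho_0>0\}$ by linearity: $\uvect_0=K(\rho_0\nabla F-\nabla p_0)$ after averaging, with $K$ the permeability tensor, taken scalar by the statement's convention. Averaging the continuity equation tested with $\psi(t,x)$ gives $|Y_f|\partial_t\rho_0+\div_x(\rho_0\int_{Y_f}\uvect_0)=0$. Substituting yields \eqref{eq:porous_media_rho0}, where $\rho_0\nabla\rho_0^\gamma$ is made meaningful from the bound on $p_1$ and the flux. On $\{\rho_0=0\}$ the flux $\rho_0\uvect_0$ vanishes, so that set contributes nothing. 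The initial condition follows from the weak continuity in time of $\rhoeps$.
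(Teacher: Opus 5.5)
There is a genuine gap in Step~3, which is where the whole difficulty lies.

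Your reason why the right-hand side of the effective viscous flux identity vanishes is the factor $\vareps^2$ in front of the viscous term. That reasoning only works for a whole-space test function such as $\nabla\Delta^{-1}T_k(\rhoeps)$. Such a function does not vanish on $\geps$, so it is not admissible in \eqref{eq:weak_impulse}. With an admissible macroscopic test function $\Beps\big(\widetilde{T_k(\rhoeps)}\big)$ you only have $\vareps\|\nabla\Beps f\|_{L^2}\le C(\|f\|_{H^1(\oeps)'}+\vareps\|f\|_{L^2})$, so
\[
\vareps^2\int_0^T\int_{\oeps}\mathbb{S}(\nabla\ueps):\nabla\Beps\big(\widetilde{T_k(\rhoeps)}\big)\,dx\,dt
\]
is bounded but of order one. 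The same holds for the force term. This is exactly the Darcy balance between $\nabla p(\rhoeps)$ and $\vareps^2\div\mathbb{S}(\nabla\ueps)$. The term does not reduce to $\vareps^2\div\ueps\,T_k(\rhoeps)$, and it is a product of two only weakly convergent sequences, so no identity of the form you write follows.

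If you instead localize the inverse divergence in the $\vareps$-cells, the viscous term does become $O(\vareps)$. But then you at best control the oscillations of $\teps\rhoeps$ inside the cells. You do not control the macroscopic oscillations in $(t,x)$ of the cell averages $\Myf(\teps\rhoeps)$, and those are the real issue.

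The missing idea is the one in Proposition~\ref{prop:improved_pressure_estimate_Bogovskii} and Corollary~\ref{cor:decomposition_pressure}:
\begin{enumerate}
\item By duality, the order-one terms define a function $\qeps$ bounded in $L^2((0,T),H^1(\oeps))+L^\infty((0,T),W^{1,\gamma}(\oeps))$, up to $O(\vareps)$ parts. So the pressure is compact in space on the macroscale.
\item For each fixed $k$, $\int_0^T\int_{\oeps}(p(\rhoeps)-\qeps)\widetilde{T_k(\rhoeps)}\,dx\,dt\to0$. Here the time derivative of $T_k(\rhoeps)$ is handled via the renormalized equation and $\|\Beps(\div g_\vareps)\|_{L^r}\le C\|g_\vareps\|_{L^r}$ for $g_\vareps\cdot\nu=0$, and this is where the restriction on $\lambda$ enters.
\item This spatial compactness is paired with the time compactness $\chi_{\oeps}\rhoeps\to\Myf(\rho_0)$ in $L^s((0,T),W^{1,\gamma}(\Omega)')$, which comes from the continuity equation.
\item It is also paired with $\sup_\vareps\|T_k(\rhoeps)-\rhoeps\|\le h(k)$, $h(k)\to0$, which comes from the $L^{\gamma+\nu}$ bound.
\end{enumerate}
Your Step~1 mentions a pressure decomposition, but what matters is a uniform bound on the macroscopic gradient of the (extended) pressure, not an $L^2L^q$ bound. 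You also never use that decomposition in Step~3.

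Once $\limsup_{\vareps\to0}\big(\int_0^T\int_{\oeps}p(\rhoeps)T_k(\rhoeps)\,dx\,dt-\int_0^T\int_\Omega p_0\,\Myf(\overline{T_k(\rho)})\,dx\,dt\big)\le h(k)$ is known, strong convergence follows directly. The ingredients are the inequality $a|T_k(\teps\rhoeps)-T_k(\rho_0)|^{\gamma+1}\le(p(\teps\rhoeps)-p(\rho_0))(T_k(\teps\rhoeps)-T_k(\rho_0))$, convexity of $p$, concavity of $T_k$, and $y$-independence of $p_0$. Your test with $\vareps\phi(t,x,x/\vareps)$ in Step~2 proves $\nabla_y\overline p=0$ correctly, and more simply than the paper does.

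So the $L_k$-renormalization in the second half of your Step~3 is unnecessary, and as written it cannot be carried out. Before strong convergence there is no limit continuity equation to renormalize: $\rhoeps\ueps$ is a product of two weakly converging sequences, and $\ueps$ is not bounded in $L^2H^1$. It would also need strong convergence of the unfolded initial densities, which the theorem does not assume.

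For the same reason, the two claims at the start of your Step~2 must come after Step~3. First, $\div_y(\rho_0\uvect_0)=0$ requires strong convergence of $\rhoeps$ to pass to the limit in $\rhoeps\ueps$. Second, $\nabla_y\rho_0=0$ follows from $\nabla_y\overline p=0$ only once $\overline p=a\rho_0^\gamma$ is established.
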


In summary, the macroscopic behavior of the system is completely described by the porous medium equation $\eqref{eq:porous_media_rho0}$ for the macroscopic density $\rho_0$. The fluid velocity $\uvect_0$, which approximates the microscopic fluid velocity $\ueps$ in the two-scale sense, as well as the pressure $p_1$, can be obtained through $\rho_0$, $F$ and suitable cell problems, see $\eqref{eq:representation_u0}$ and $\eqref{eq:representation_p1}$. In particular, the Darcy velocity $ \bar{\uvect}_0(t,x):=\int_{Y_f} \uvect_0(t,x,y) dy $ is given by 
\begin{align*}
    \bar{\uvect}(t,x)  = K\left[\rho_0 \nabla F - \nabla_x p_0 \right](t,x)
\end{align*}
with the permeability tensor $K\in \R^{3\times 3}$ defined in $\eqref{def:permeability_tensor}$.

\section{A priori estimates}
\label{sec:apriori_estimates}
Here we derive uniform a priori estimates for a weak solution $(\rhoeps,\ueps)$ of $\eqref{micro_model}$ (see also \cite{feireisl2010homogenization,hofer2025quantitative,masmoudi2002homogenization} for similar estimates). We start with the basic energy inequality:

\begin{proposition}\label{prop:apriori_basic}
It holds that
\begin{align*}
\vareps^{\lambda}\left\|\rhoeps |\ueps|^2\right\|_{L^{\infty}((0,T), L^1(\oeps))} + \|\ueps\|_{L^2((0,T)\times \oeps)} + \vareps \|\nabla \ueps \|_{L^2((0,T)\times \oeps)} + \|\rhoeps\|_{L^{\infty}((0,T), L^{\gamma}(\oeps))} \le C
\end{align*}
for a constant $C>0$ independent of $\vareps$.

\end{proposition}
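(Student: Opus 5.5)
The plan is to start from the energy inequality \eqref{eq:energy-ineq} and control its right-hand side with three tools: the Korn-type coercivity of $\Ss$, the uniform Poincaré inequality in the perforated domain, and the structure of the conservative force $\nabla F$.

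\textbf{Coercivity and Poincaré.} Since $\ueps\in W^{1,2}_0(\oeps)$, extension by zero and integration by parts give $\int_{\oeps}\Ss(\nabla\ueps):\nabla\ueps\,dx \geq \mu\|\nabla\ueps\|^2_{L^2(\oeps)}$. Indeed,
\[
\int (\nabla \ueps+\nabla\ueps^\top):\nabla\ueps = \|\nabla\ueps\|^2+\|\div\ueps\|^2 ,
\]
and the remaining trace terms, $-\frac23\mu|\div\ueps|^2+\eta|\div\ueps|^2$, are absorbed. Next, every cell $\vareps(Y_f+k)$ touches the solid part $\vareps\Gamma$, where $\ueps$ vanishes. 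Scaling the Poincaré inequality on $Y_f$ with zero trace on $\Gamma$ and summing over $k\in K_\vareps$ therefore gives
\[
\|\ueps\|_{L^2(\oeps)}\le C\vareps\|\nabla\ueps\|_{L^2(\oeps)} .
\]
Consequently the dissipation term controls $\mu\vareps^2\|\nabla\ueps\|^2 \gtrsim \|\ueps\|^2_{L^2}+\vareps^2\|\nabla\ueps\|^2$.

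\textbf{The force term.} This is the main obstacle. A naive Hölder bound gives $|\int\rhoeps\nabla F\cdot\ueps| \le C\|\rhoeps\|_{L^{2}}\|\ueps\|_{L^2}$, which needs $\gamma\ge2$; for $\gamma>\frac95$ we have to exploit that the force is a gradient. If $F$ were time-independent, we would test the mass equation \eqref{eq:weak_mass_balance} with $\psieps=F$ (admissible by density, since $F\in W^{1,\infty}$ and the test functions need not vanish on $\partial\oeps$). This yields
\[
\int_0^\tau\!\!\int_{\oeps}\rhoeps\ueps\cdot\nabla F = \int_{\oeps}\rhoeps(\tau)F - \int_{\oeps}\rho_{\vareps,0}F(0) - \int_0^\tau\!\!\int_{\oeps}\rhoeps\partial_tF .
\]
By mass conservation, $\|\rhoeps(t)\|_{L^1}=\|\rho_{\vareps,0}\|_{L^1}\le C\|\rho_{\vareps,0}\|_{L^\gamma}\le C$, so every term is bounded by $C\|F\|_{W^{1,\infty}}(1+T)$. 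For time-dependent $F$ the same argument works, with the $\partial_tF$ term handled by the same $L^1$ bound.

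\textbf{Conclusion.} The right-hand side of \eqref{eq:energy-ineq} is then bounded uniformly by \eqref{Assump-data}. Taking the supremum over $\tau$ gives the bounds on $\vareps^\lambda\|\rhoeps|\ueps|^2\|_{L^\infty L^1}$ and on $\|\rhoeps\|^\gamma_{L^\infty L^\gamma}$, since $p(\rhoeps)=a\rhoeps^\gamma$. Integrating the dissipation up to $\tau=T$, together with the coercivity and Poincaré step, bounds $\vareps\|\nabla\ueps\|_{L^2}$ and hence $\|\ueps\|_{L^2}$.
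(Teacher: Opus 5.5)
Your proposal is correct and follows essentially the paper's proof. The paper likewise combines the energy inequality \eqref{eq:energy-ineq}, the data bounds \eqref{Assump-data}, Korn's inequality and (implicitly) the uniform Poincaré inequality of Lemma \ref{lem:Poincare_inequality}; your identity $\int_{\oeps}\Ss(\nabla\ueps):\nabla\ueps\,dx=\mu\|\nabla\ueps\|^2_{L^2(\oeps)}+(\frac{\mu}{3}+\eta)\|\div\ueps\|^2_{L^2(\oeps)}$ is exactly the Korn step for $W^{1,2}_0$ fields. It also handles the force term the same way, using the continuity equation to trade $\rhoeps\nabla F\cdot\ueps$ for endpoint terms and $\rhoeps\partial_t F$, controlled by mass conservation and $F\in W^{1,\infty}$; your use of the weak formulation \eqref{eq:weak_mass_balance} is the rigorous version of the paper's formal integration by parts.
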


\begin{proof}
The proof follows from the energy inequality \eqref{eq:energy-ineq} and the choices of given data \eqref{Assump-data} together with the Korn inequality. Since this procedure is quite standard, we just  estimate the source term on the right-hand side of \eqref{eq:energy-ineq}, to emphasize the specific choice of the source term via a potential. 
Indeed, we have 
\begin{align*}
\int_0^\tau \int_{\oeps} \rho_\vareps
\nabla F \cdot \uvect_\vareps dx dt &=
- \int_0^\tau \int_{\oeps} F \div (\rhoeps \uvect_\vareps) dx dt = 
 \int_0^\tau \int_{\oeps} F \partial_t \rhoeps  dx dt \\
 & = - \int_0^\tau \int_{\oeps} \rho_\vareps \partial_t F dxdt + \Big[\int_{\oeps} \rho_\vareps F dx  \Big]_{t=0}^\tau  \ ,
\end{align*}
and thus 
\begin{align*}
\bigg|  \int_0^\tau \int_{\oeps} \rho_\vareps
\nabla F \cdot \uvect_\vareps dx dt  \bigg|  \leq C ,
\end{align*}
using the mass conservation law and the fact that $F\in W^{1,\infty}((0,T)\times \Omega_\vareps)$.  
\end{proof}

\begin{remark}\label{rem:general_force} 
If we replace $\nabla F$ by a general force $\mathbf{f} \in L^{\infty}((0,T)\times \Omega)^3$, we obtain (see also \cite{hofer2025quantitative}) with the Korn inequality
\begin{align}
\int_{\oeps} \rhoeps \mathbf{f} \cdot \uvect_{\vareps} dx \le C \| \rhoeps\|_{L^2(\oeps)}^2 + \frac12 \int_{\oeps} \vareps^2 \mathbb{S}(\nabla \uvect_{\vareps}) : \nabla \uvect_{\vareps} dx.
\end{align}
Hence, for $\gamma \geq 2$ the above term can be controlled via the Gr\"onwall inequality.
\end{remark}

For the strong (two-scale) convergence of the density we derive an improved estimate of the pressure. Such an idea was already used in \cite{masmoudi2002homogenization} (see also \cite{feireisl2010homogenization}). More precisely, in \cite{masmoudi2002homogenization} the restriction operator, first introduced by Tartar in  \cite{tartar1980incompressible}, was used to extend the pressure to the whole domain. Our ideas are very similar, but we will not construct an extension (this can be done, but is not necessary for the proof of the main result). In particular, we use the Bogovskii operator instead of the restriction operator (of course, these concepts are somehow equivalent).
We decompose the pressure $p(\rhoeps)$ in a remaining part of order $1$ (not vanishing for $\vareps\to 0$), and something tending to $0$ in a suitable sense for $\vareps \to 0$. For this, our estimates are similar to the estimates in \cite{hofer2025quantitative}. However, in contrast to 
\cite[Lemma 2.4]{hofer2025quantitative} we formulate in Proposition \ref{prop:improved_pressure_estimate_Bogovskii} the estimate in such a way that the right-hand side includes the Bogovskii operator (not only the test-function for the pressure). The advantage is to use the properties of the Bogovskii operator and special choices of test functions. More precisely, terms including time-derivatives lead to  terms of the form $\Beps(\div g_{\vareps})$ with $g_{\vareps}\cdot \nu = 0$ on $\geps$. Such terms can be controlled in a suitable way by using the estimates for the Bogovskii operator, see Section \ref{sec:Bogovskii_operator} in the appendix.
\\

Before we formulate the main result of this section in Proposition \ref{prop:improved_pressure_estimate_Bogovskii}, we introduce the following functional setting  (see also  \cite[Remark 3.3]{hofer2025quantitative}). For Banach spaces $X_{\vareps}^1$ and $X_{\vareps}^2$ embedded in an ambient topological vector space, we use the notation $\qeps \in X_{\vareps}^1 + \vareps X_{\vareps}^2$ if there exist $q_{\vareps}^1 \in X_{\vareps}^1$ and $q_{\vareps}^2 \in X_{\vareps}^2$ such that $\qeps = q_{\vareps}^1 + q_{\vareps}^2$. Further, we say that $\qeps $ is bounded in  $X_{\vareps}^1 + \vareps X_{\vareps}^2$, if 
\begin{align*}
    \|\qeps^1\|_{X_{\vareps}^1} \le C, \qquad \|\qeps^2\|_{X_{\vareps}^2} \le C \vareps
\end{align*}
for a constant $C>0$ independent of $\vareps$. This notation can be generalized in an obvious way to an arbitrary finite number of Banach spaces. 
\\

Now, we introduce a well-known duality concept. For the sake of completeness, we describe this procedure in a detailed way. For Banach spaces $X$ and $Y$ continuously embedded into an ambient Hausdorff space we have $(X\cap Y)' = X' + Y'$. More precisely, for $z' \in (X\cap Y)'$ with 
\begin{align*}
    \langle z', z\rangle \le C_X \|z\|_X + C_Y \|z\|_Y \qquad \mbox{ for all } z \in X\cap Y,
\end{align*}
there exist $x' \in X'$ and $y' \in Y'$, such that for all $z \in X\cap Y$ it holds that
\begin{align*}
    \langle z',z\rangle = \langle x', z\rangle  + \langle y' ,y\rangle,
    \\
    \|x'\|_X \le C_X, \qquad \|y'\|_Y \le C_Y , 
\end{align*}
see also \cite[Lemma 3.5]{hofer2025quantitative}.

In the following, we have to deal particularly with functions spaces of the form $X = L^s((0,T),B)$ with $s \in [1,\infty]$ and a Banach space $B$. Hence, we summarize some standard representations for functionals in the space $X'$. We consider the case $B:= W^{1,p}(\oeps)'$ for $p \in (1,\infty)$, hence the space $W^{1,p}(\oeps)$ (and its dual) is reflexive and therefore, we have for $s \in [1,\infty)$ and its dual exponent $s'$
\begin{align*}
L^s((0,T),W^{1,p}(\oeps)')' \cong L^{s'}((0,T),W^{1,p}(\oeps)^{\prime\prime}) \cong L^{s'}((0,T),W^{1,p}(\oeps)).
\end{align*}
More precisely, let us denote by $J$ the isometry between $W^{1,p}(\oeps)$ and $W^{1,p}(\oeps)^{\prime\prime}$. Then, for $G_{\vareps} \in L^s((0,T),W^{1,p}(\oeps)')'$ we define $g_{\vareps} := J^{-1} G_{\vareps} \in L^{s'}((0,T),W^{1,p}(\oeps))$. Now, for $\psieps \in L^s((0,T),L^q(\oeps))$ with $q= \frac{pn}{p(n +1 ) - n}$ for $p<n$ and $q>1$ for $p=n$ and $q=1$ for $p>n$ (this is valid for arbitrary dimension $n\in \N$), we use the canonical embedding of $L^s((0,T),L^q(\oeps))  $ into $L^s((0,T),W^{1,p}(\oeps)^{\prime} )$ 
\begin{align*}
\langle  G_{\vareps} , \psieps \rangle_{L^{s'}((0,T),W^{1,p}(\oeps)^{\prime\prime}), L^s((0,T),W^{1,p}(\oeps){\prime})} &= \langle \psieps ,g_{\vareps} \rangle_{L^s((0,T),W^{1,p}(\oeps){\prime}), L^{s'}((0,T),W^{1,p}(\oeps))}
\\
&= \int_0^T \int_{\oeps} g_{\vareps} \psieps dx dt.
\end{align*}

\vspace{.3cm}

Now, we are able to formulate the following improved pressure estimate:
\begin{proposition}\label{prop:improved_pressure_estimate_Bogovskii}
There exists $q_{\vareps} \in L^1((0,T)\times \oeps)$ and bounded in 
\begin{align*}
 L^2((0,T),H^1(\oeps)) + L^{\infty}((0,T),W^{1,\gamma}(\oeps)) + \vareps\left( L^2((0,T)\times \oeps) + L^{\infty}((0,T),L^{\gamma}(\oeps)) \right)
\end{align*}
such that for all
\begin{align*}
\psieps \in W^{1,1}((0,T),L^{\frac{2\gamma}{\gamma - 1}}(\oeps)) \cap L^{\infty}((0,T),L^{\frac{s'\gamma p}{(p-2)\gamma - p}}(\oeps))
\end{align*}
having mean-value zero 
for $p \in \left( \frac{2\gamma}{\gamma - 1},\min\{6,\frac{4\gamma}{(\gamma-1)(2-s)}\}\right]$ with $s \in (1,2]$ arbitrary  and $\theta =\frac{3p-6}{2p} $ it holds that 
\begin{align}\label{ineq:improved_pressure_Bogovskii}
&\bigg| \int_0^T \int_{\oeps} (p(\rhoeps) - q_{\vareps} ) \psieps dx dt \bigg|
    \le  C \bigg\{\vareps^{\frac{\lambda}{2}} \|\Beps(\partial_t \psieps)\|_{L^1((0,T),L^{\frac{2\gamma}{\gamma -1}}(\oeps))}  \notag \\ 
  & \qquad \qquad   +  \vareps^{\frac{\lambda}{s} - \frac{2\theta}{s} -1} \vareps \|\nabla \Beps (\psieps)\|_{L^{\infty}((0,T),L^{\frac{s'\gamma p}{(p-2)\gamma - p}}(\oeps))}
    + \vareps^{\frac{\lambda}{2}} \|\Beps(\psieps)\|_{W^{1,1}((0,T),L^{\frac{2\gamma}{\gamma -1}}(\oeps))} \bigg\} , 
\end{align}
where $\mathcal B_{\vareps}$ is the Bogovskii operator as introduced in Appendix \ref{sec:Bogovskii_operator}.
\end{proposition}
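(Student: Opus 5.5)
We test the weak momentum equation \eqref{eq:weak_impulse} with $\phieps := \Beps(\psieps)$, where $\Beps$ is the Bogovskii operator on $\oeps$ from Appendix \ref{sec:Bogovskii_operator}. Since $\psieps$ has mean value zero, $\div \Beps(\psieps) = \psieps$ and $\Beps(\psieps)=0$ on $\partial\oeps$. Using that $\Beps$ is linear and commutes with $\partial_t$, so that $\partial_t\Beps(\psieps) = \Beps(\partial_t\psieps)$, we obtain after a density argument
\begin{align*}
\int_0^T\!\!\int_{\oeps} p(\rhoeps)\psieps \,dx\,dt
&= -\int_0^T\!\!\int_{\oeps}\Big[\vareps^{\lambda}\rhoeps\ueps\cdot \Beps(\partial_t\psieps) + \vareps^{\lambda}\rhoeps\ueps\otimes\ueps:\nabla\Beps(\psieps) \\
&\qquad - \vareps^2\Ss(\nabla\ueps):\nabla\Beps(\psieps) + \rhoeps\nabla F\cdot\Beps(\psieps)\Big]dx\,dt + \text{(boundary terms in time)}.
\end{align*}
The remaining task is to sort the terms on the right-hand side into two groups. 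Terms that act on $\psieps$ as a functional which is bounded, uniformly in $\vareps$, in the duals of the spaces listed for $q_\vareps$ go into $q_\vareps$. Terms that are small in the norms on the right-hand side of \eqref{ineq:improved_pressure_Bogovskii} stay in the estimate.

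\textbf{Viscous and force terms.} By Proposition \ref{prop:apriori_basic}, $\vareps\nabla\ueps$ is bounded in $L^2$. Hence
$$\psieps\mapsto \vareps^2\int\Ss(\nabla\ueps):\nabla\Beps(\psieps)$$
is bounded by $C\,\vareps\|\nabla\Beps\psieps\|_{L^2}$. The uniform Bogovskii estimate in perforated domains, $\vareps\|\nabla\Beps f\|_{L^r}+\|\Beps f\|_{L^r}\le C\|f\|_{L^r}$, turns this into a bound by $C\|\psieps\|_{L^2}$. By the duality $(X\cap Y)'=X'+Y'$ and the representation of functionals on $L^s((0,T),W^{1,p}(\oeps)')$ recalled above, this term is represented by an element of $L^2((0,T),H^1(\oeps))$. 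Similarly, $\rhoeps\nabla F\cdot\Beps(\psieps)$ is controlled by $\|\rhoeps\|_{L^\infty L^\gamma}\|\Beps\psieps\|_{L^1L^{\gamma'}}$. It gives the part in $L^\infty((0,T),W^{1,\gamma}(\oeps))$. The $\vareps$-parts $\vareps(L^2+L^\infty L^\gamma)$ come from the lower-order pieces of the Bogovskii bound, which are measured in $L^r$ rather than in $W^{1,r}$-duals. This decomposition is what produces $q_\vareps$, and I expect it to require care about which dual representation is used.

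\textbf{Time-derivative and boundary terms.} The momentum is split as $\sqrt{\rhoeps}\cdot\sqrt{\rhoeps}\ueps$. The factor $\vareps^{\lambda/2}\sqrt{\rhoeps}\ueps$ is bounded in $L^\infty L^2$ by the energy, and $\sqrt{\rhoeps}$ is bounded in $L^\infty L^{2\gamma}$. Therefore $\vareps^\lambda\rhoeps\ueps$ is $\vareps^{\lambda/2}$ times a function bounded in $L^\infty L^{2\gamma/(\gamma+1)}$. This gives the terms $\vareps^{\lambda/2}\|\Beps(\partial_t\psieps)\|_{L^1L^{2\gamma/(\gamma-1)}}$ and, for the values at $t=0,T$, $\vareps^{\lambda/2}\|\Beps\psieps\|_{W^{1,1}L^{2\gamma/(\gamma-1)}}$. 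For the initial momentum, assumption \eqref{Assump-data} is used in the same way.

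\textbf{Convective term (main obstacle).} We interpolate $\ueps$ between $L^2$ and $L^6$. Here $\|\ueps\|_{L^2}\le C$, and Poincaré in the perforated domain together with Sobolev gives $\|\ueps\|_{L^6}\lesssim\|\nabla\ueps\|_{L^2}\le C\vareps^{-1}$, so $\|\ueps\|_{L^2L^p}\le C\vareps^{-\theta}$ with $\theta=\frac{3p-6}{2p}$. We then write $\rhoeps|\ueps|^2 = (\rhoeps|\ueps|^2)^{1/s}\,\rhoeps^{1-1/s}|\ueps|^{2-2/s}$. The first factor is bounded in $L^\infty L^s$ by $\vareps^{-\lambda/s}$ from the energy. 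The second factor is estimated with $\rhoeps\in L^\infty L^\gamma$ and $\ueps\in L^2L^p$. Hölder in $x$ produces exactly the dual exponent $\frac{s'\gamma p}{(p-2)\gamma-p}$, and the constraints on $p$ ensure that the exponents are admissible, with integrability in time $\le 2/(2-2/s)$. Collecting the powers yields
$$\vareps^{\lambda-\lambda/s}\cdot\vareps^{-\theta(2-2/s)}\cdot\vareps^{-1}\;\vareps\|\nabla\Beps\psieps\|.$$
The powers still have to be reconciled with the exact exponent $\frac{\lambda}{s}-\frac{2\theta}{s}-1$ claimed in the statement, possibly by reversing the roles of $s$ and $s'$. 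I expect this bookkeeping to be the delicate step. I would redo the Hölder split, choosing which factor carries which power, until the exponent matches the one stated.
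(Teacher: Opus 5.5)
Your overall route is the paper's. You test \eqref{eq:weak_impulse} (with $\tau=T$) by $\Beps(\psieps)$ and bound the time-derivative and endpoint terms via $\sqrt{\rhoeps}\cdot\sqrt{\rhoeps}\ueps$ and $W^{1,1}\hookrightarrow C^0$ in time. You then put the viscous and force terms into $q_\vareps$ via $(X\cap Y)'=X'+Y'$ and keep the convective term. However, two steps, as written, do not prove the statement.

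\textbf{Convective term.} Your split $\rhoeps|\ueps|^2=(\rhoeps|\ueps|^2)^{1/s}\rhoeps^{1/s'}|\ueps|^{2/s'}$ produces $\vareps^{\lambda-\lambda/s}\vareps^{-2\theta/s'}=\vareps^{(\lambda-2\theta)/s'}$. Since $s'\ge 2\ge s$, this is strictly weaker than the claimed $\vareps^{(\lambda-2\theta)/s}$. The difference is not cosmetic. Because $2\theta>\frac{3}{\gamma}$, the condition $\frac{\lambda-2\theta}{s'}-1>0$ would force $\lambda>2+\frac{3}{\gamma}$, so under the hypothesis $\lambda>1+\frac{3}{\gamma}$ your bound need not vanish.

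The correct choice is the reversal you suspected: $\rhoeps|\ueps|^2=(\rhoeps|\ueps|^2)^{1/s'}\rhoeps^{1/s}|\ueps|^{2/s}$. Use H\"older with $\frac{1}{s'}+\frac{1}{\gamma s}+\frac{2}{ps}+\frac{1}{r}=1$ in space, and $\frac1s+\frac1{s'}=1$ in time with the first two factors in $L^\infty$. With norms taken over $(0,T)\times\oeps$, this gives
\[
\vareps^{\lambda}\big\|\rhoeps|\ueps|^2\big\|_{L^\infty L^1}^{1/s'}\|\rhoeps\|_{L^\infty L^\gamma}^{1/s}\|\ueps\|_{L^2L^p}^{2/s}\|\nabla\Beps(\psieps)\|_{L^{s'}L^r}\le C\vareps^{\frac{\lambda}{s}-\frac{2\theta}{s}}\|\nabla\Beps(\psieps)\|_{L^{s'}L^r},
\]
with $r=\frac{s\gamma p}{(p-2)\gamma-p}$. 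The spatial exponent now carries $s$, not $s'$; it was your split that produced $s'$ there, which is presumably why it looked consistent with the statement. You reach the norm in the statement because $r\le\frac{s'\gamma p}{(p-2)\gamma-p}$, $|\oeps|\le|\Omega|$, and $L^\infty\subset L^{s'}$ in time.

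\textbf{Viscous and force terms.} The bound $\vareps\|\nabla\Beps f\|_{L^r}+\|\Beps f\|_{L^r}\le C\|f\|_{L^r}$ you invoke only shows that these functionals are bounded on $L^2((0,T)\times\oeps)$ and $L^1((0,T),L^{\gamma'}(\oeps))$. Their representatives are then merely $O(1)$ in $L^2$ and $L^\infty L^\gamma$. That is not the space claimed for $q_\vareps$, and it loses the gradient bound that is the point of the proposition: it is what later makes $p_0$ independent of $y$ and drives the strong convergence of $\rhoeps$. You allude to the right mechanism in your last sentences, but the estimate you actually write down discards it.

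You need the refined bound of Lemma \ref{lem:Bogovskii_Beps}, $\vareps\|\nabla\Beps f\|_{L^r}\le C(\|f\|_{W^{1,r'}(\oeps)'}+\vareps\|f\|_{L^r})$. With $r=2$ it bounds the viscous term by $C(\|\psieps\|_{L^2((0,T),H^1(\oeps)')}+\vareps\|\psieps\|_{L^2((0,T)\times\oeps)})$. For the force term, first apply the Poincar\'e inequality of Lemma \ref{lem:Poincare_inequality}, $\|\Beps(\psieps)\|_{L^{\gamma'}}\le C\vareps\|\nabla\Beps(\psieps)\|_{L^{\gamma'}}$. Then the refined bound with $r=\gamma'$ gives $C(\|\psieps\|_{L^1((0,T),W^{1,\gamma}(\oeps)')}+\vareps\|\psieps\|_{L^1((0,T),L^{\gamma'}(\oeps))})$.

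Only with these two-term bounds does the duality splitting, together with reflexivity of $W^{1,r}(\oeps)$, yield parts bounded in $L^2((0,T),H^1(\oeps))$ and $L^\infty((0,T),W^{1,\gamma}(\oeps))$. The remaining parts are then of size $\vareps$ in $L^2((0,T)\times\oeps)$ and $L^\infty((0,T),L^\gamma(\oeps))$.
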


\begin{proof}
Let $\psieps$ be as in the statement. Due to the mean-value zero condition we have with the Bogovskii operator $\Beps$ (see Appendix  \ref{sec:Bogovskii_operator}) that $\div \Beps \psieps = \psieps$ leading to 
\begin{align*}
\int_0^{T} \int_{\oeps} p(\rhoeps) \psieps dx dt =& -\int_0^{T} \int_{\oeps} \vareps^{\lambda} \rhoeps \ueps \cdot \Beps(\partial_t \psieps) dx dt - \int_0^T \int_{\oeps} \vareps^{\lambda} \rhoeps \ueps \otimes \ueps : \nabla \Beps (\psieps) dx dt
\\
&+ \int_0^{T} \int_{\oeps} \vareps^2 \mathbb{S}(\nabla \ueps) : \nabla \Beps(\psieps) dx dt - \int_0^{T} \int_{\oeps}\rhoeps \nabla F \cdot \Beps (\psieps) dx dt 
\\
&+ \int_{\oeps}  \Big( \vareps^{\lambda} (\rhoeps \ueps)(T) \Beps(\psieps)(T)  - \vareps^{\lambda} \m_{\vareps,0} \Beps(\psieps)(0) \Big) dx
\\
=&: \sum_{i=1}^5 \langle Q_{\vareps,i} ,\psieps\rangle_{Y'_{\vareps,i},Y_{\vareps,i}} =: \sum_{i=1}^5 I_{\vareps}^i.
\end{align*}
with $Q_{\vareps,i} \in Y_{\vareps,i}^{\prime}$ and  the function spaces $Y_{\vareps,i}$ defined by 
\begin{align*}
    Y_{\vareps,1} &:= W^{1,1}((0,T),L^{\frac{2\gamma}{\gamma -1}}(\oeps))   ,  
    \\
    Y_{\vareps,2} &:= L^{s'}((0,T), L^{\frac{s\gamma p}{(p-2)\gamma -p}}(\oeps)),
    \\
    Y_{\vareps,3} &:= L^2((0,T)\times \oeps),
    \\
    Y_{\vareps,4} &:= L^1((0,T),L^{\frac{\gamma}{\gamma -1}}(\oeps)),
    \\
    Y_{\vareps,5} &:= W^{1,1}((0,T),L^{\frac{2\gamma}{\gamma -1}}(\oeps)).
\end{align*}
That the operators $Q_{\vareps,i}$ are in fact bounded functionals on the respective spaces $Y_{\vareps,i}$ is obtained from the estimates below. We will see that  $Q_{\vareps,3}$ and $Q_{\vareps,4}$ play a special role, because their norm is of order $1$, while the other norms scale with an order of $\vareps^{\kappa}$ with $\kappa >0$. Let us estimate the terms $I_{\vareps}^i$ separately. Here, we show two types of estimates. First, we show a bound with respect to the Bogovskii operator and then a bound with respect to $\psieps$ with respect to suitable norms.  

For $I_{\vareps}^1$, we have with the a priori estimates from Proposition \ref{prop:apriori_basic} and Corollary \ref{cor:Bogovskii}. Indeed, we compute 
\begin{align*}
|I_{\vareps}^1| &\le \vareps^{\lambda}\|\sqrt{\rhoeps}\|_{L^{\infty}((0,T),L^{2\gamma}(\oeps))} \|\sqrt{\rhoeps} \ueps\|_{L^{\infty}((0,T),L^2(\oeps))} \|\Beps(\partial_t \psieps)\|_{L^1((0,T),L^{\frac{2\gamma}{\gamma -1}}(\oeps))} 
\\
& \leq C \vareps^{\frac{\lambda}{2}} \|\partial_t \psi_\vareps\|_{L^1((0,T); L^{\frac{2\gamma}{\gamma-1}}(\Omega_\vareps))} .
\end{align*}
The last inequality shows in particular that $Q_{\vareps,1}$ is an element of the dual space of $W^{1,1}((0,T),L^{\frac{2\gamma}{\gamma -1}} (\oeps))$.

For the second term  $I_{\vareps}^2$, we use the H\"older inequality with  $ \frac{1}{s'} + \frac{1}{\gamma s} + \frac{2}{p s} + \frac{1}{r} = 1$  
for $p \in \left( \frac{2\gamma}{\gamma - 1} , 6\right]$ (in particular $p>2$) and $r = \frac{s\gamma p}{(p-2)\gamma - p} \in (2,\infty)$ in the spatial variable and $\frac{1}{\infty} + \frac{1}{\infty} + \frac{1}{s} + \frac{1}{s'} = 1$ to obtain 
\begin{align*}
|I_{\vareps}^2| &\le \vareps^\lambda \left\| \rhoeps |\ueps|^2 \right\|_{L^\infty((0,T),L^1(\oeps))}^{\frac{1}{s'}} \left\| \rhoeps \right\|_{L^\infty((0,T),L^\gamma(\oeps))}^{\frac{1}{s}} \left\| \ueps \right\|_{L^2((0,T),L^p(\oeps))}^{\frac{2}{s}} \|\nabla \Beps (\psieps) \|_{L^{s'}((0,T),L^r(\oeps))}
\\
&\le C \vareps^{\lambda - \frac{\lambda}{s'}} \left\| \ueps \right\|_{L^2((0,T),L^p(\oeps))}^{\frac{2}{s}} \|\nabla \Beps (\psieps) \|_{L^{s'}((0,T),L^r(\oeps))}.
\end{align*}
For the term including $\ueps$ we use the Gagliardo-Nirenberg inequality to get with $\theta =\frac{3p-6}{2p} $ and the a priori bound for $\ueps $ from Proposition \ref{prop:apriori_basic} (and the Poincar\'e inequality in Lemma \ref{lem:Poincare_inequality} in the appendix) that 
\begin{align}\label{ineq:Gagliardo_Nirenberg_ueps}
\|\ueps\|_{L^2((0,T),L^p(\oeps))}^2 \le C \int_0^T \left(\|\nabla \ueps\|^{2\theta}_{L^2(\oeps)} \|\ueps\|^{2(1-\theta)}_{L^2(\oeps)} + \|\ueps\|_{L^2(\oeps)}^2\right) dt \le C \left(\vareps^{-2\theta} + 1\right).
\end{align}
Hence, we get 
\begin{align*}
|I_{\vareps}^2| &\le  C \vareps^{\frac{\lambda}{s} - \frac{2\theta}{s} - 1} \vareps \|\nabla \Beps(\psieps)\|_{L^{s'}((0,T),L^r(\oeps))} \notag \\
    &\le C \vareps^{\frac{\lambda}{s} - \frac{2\theta}{s} - 1} \left( \|\psieps\|_{L^{s'}((0,T),W^{1,r'}(\oeps)')} + \vareps \|\psieps\|_{L^{s'}((0,T),L^r(\oeps))} \right).
\end{align*}
From the above inequality,  observe that we  need $\frac{\lambda}{s} - \frac{2\theta}{s} - 1 >0$ (if we want the right-hand side to vanish for $\vareps \to 0$). Since  $p>\frac{2\gamma}{\gamma -1}$, we obtain the condition $\lambda > 1 + \frac{3}{\gamma}$ for $s$ close to $1$ and $p $ close to $\frac{2\gamma}{\gamma - 1}$.

For $I_{\vareps}^3$ we obtain, again by using Proposition \ref{prop:apriori_basic}, that 
\begin{align}
\begin{aligned}\label{ineq:I_eps_3}
|I_{\vareps}^3| &\le C \vareps^2 \|\nabla \ueps \|_{L^2((0,T)\times \oeps)} \|\nabla \Beps (\psieps)\|_{L^2((0,T)\times \oeps)} 
\\
&\le C \vareps \|\nabla \Beps(\psieps) \|_{L^2((0,T)\times \oeps)} 
\\
&\le C \left(\|\psieps \|_{L^2((0,T),H^1(\oeps)')} + \vareps \|\psieps \|_{L^2((0,T) \times \oeps)}\right).
\end{aligned}
\end{align}
We see that the first term on the right-hand side in the last line has no scaling factor of the form $\vareps^{\varkappa}$ with $\varkappa >0$. Hence, we expect that this term has a contribution for $\vareps \to 0$ in the limit. Therefore, we will discuss below the functional $Q_{\vareps,3}$ in more detail and decompose it in a suitable way.

Next, we estimate $I_{\vareps}^4$. We have
\begin{align*}
|I_{\vareps}^4| &\le \|\nabla F\|_{L^{\infty}((0,T)\times \oeps)} \|\rhoeps\|_{L^{\infty}((0,T),L^{\gamma }(\oeps))} \|\Beps (\psieps)\|_{L^1((0,T),L^{\frac{\gamma }{\gamma  -1}}(\oeps))}
\\
&\le C\vareps \|\nabla \Beps(\psieps)\|_{L^1((0,T), L^{\frac{\gamma }{\gamma  -1}}(\oeps))}
\\
&\le C \left( \|\psieps\|_{L^1((0,T),W^{1,\gamma}(\oeps)')} + \vareps \|\psieps\|_{L^1((0,T),L^{\frac{\gamma }{\gamma  -1}}(\oeps))} \right), 
\end{align*}
by using the Poincar\'{e} inequality from Lemma \ref{lem:Poincare_inequality} and the estimate of 
Bogovskii operator given by Lemma \ref{lem:Bogovskii_Beps}. 
Again, as in the previous case $i=3$, we have a term of order $1$, and we will also decompose $Q_{\vareps,4}$ below.

Finally, we calculate the term  $I_{\vareps}^5$.  First, observe that  under the assumption on the initial data in Definition \ref{Def:weak_sol}, 
\begin{align*}
\|\m_{\vareps,0}\|_{L^{\frac{2\gamma}{\gamma +1}}(\oeps)} \le C\vareps^{-\frac{\lambda}{2}} . 
\end{align*}
This, together with the H\"older inequality for $\frac{1}{2\gamma} + \frac12 + \frac{1}{s} = 1$ with $s = \frac{2\gamma}{\gamma -1}$, we compute 
\begin{align*}
|I_{\vareps}^5| \le C \vareps^{\lambda} \left( \|\rhoeps\|_{L^{\infty}((0,T),L^{\gamma}(\oeps))}^{\frac12} \|\rhoeps |\ueps|^2\|_{L^{\infty}((0,T),L^1(\oeps))}^{\frac12} + \vareps^{-\frac{\lambda}{2}} \right) \|\Beps (\psieps)\|_{L^{\infty}((0,T),L^{\frac{2\gamma}{\gamma -1}}(\oeps))}.
\end{align*}
Using the continuity of the embedding $W^{1,1}((0,T),L^{\frac{2\gamma}{\gamma -1}}(\oeps))$ into $C^0([0,T],L^{\frac{2\gamma}{\gamma -1}}(\oeps))$,  we obtain with the 
a priori estimates from Proposition \ref{prop:apriori_basic}
\begin{align}\label{ineq:_I_eps_5}
|I_{\vareps}^5| &\le C \vareps^{\frac{\lambda}{2}} \left( \|\Beps(\psieps)\|_{L^1((0,T),L^{\frac{2\gamma}{\gamma -1}}(\oeps))} + \| \Beps (\partial_t \psieps)\|_{L^1((0,T),L^{\frac{2\gamma}{\gamma -1}}(\oeps))} \right)
\notag \\
&\le C \vareps^{\frac{\lambda}{2}} \left( \|\psieps\|_{W^{1,1}((0,T),W^{1,\frac{2\gamma}{\gamma + 1}}(\oeps)')} + \vareps \|\psieps \|_{W^{1,1}((0,T),L^{\frac{2\gamma}{\gamma -1}}(\oeps))}\right).
\end{align}

Next, we decompose the functionals $Q_{\vareps,3}$ and $Q_{\vareps,4}$.  From inequality $\eqref{ineq:I_eps_3}$ we obtain
\begin{align*}
|\langle Q_{\vareps,3} , \psieps \rangle_{Y_{\vareps,3}}| \le \left(\|\psieps \|_{L^2((0,T),H^1(\oeps)')} + \vareps \|\psieps \|_{L^2((0,T) \times \oeps)}\right)
\end{align*}
for all $\psieps \in Y_{\vareps,3} = L^2((0,T)\times \oeps)$. Hence, there exist 
 $q_{\vareps,3}^1 \in L^2((0,T),H^1(\oeps))$ and $q_{\vareps,3}^2 \in L^2((0,T)\times \oeps)$ with 
\begin{align*}
\|q_{\vareps,3}^1\|_{L^2((0,T),H^1(\oeps))} + \vareps^{-1} \|q_{\vareps,3}^2\|_{L^2((0,T)\times \oeps)} \le C,
\end{align*}
such that 
\begin{align*}
\langle Q_{\vareps,3} , \psieps \rangle_{Y_{\vareps,3}} = \int_0^T \int_{\oeps } q_{\vareps,3} \psieps dx dt,
\end{align*}
where $q_{\vareps,3} := q_{\vareps,3}^1 + q_{\vareps,3}^2$. 

In the same way we can treat $Q_{\vareps,4}$ and obtain $q_{\vareps,4}^1 \in L^{\infty}((0,T),W^{1,\gamma}(\oeps))$, $q_{\vareps,4}^2 \in L^{\infty}((0,T),L^{\gamma}(\oeps))$ with
\begin{align*}
    \|q_{\vareps,4}^1 \|_{L^{\infty}((0,T),W^{1,\gamma}(\oeps))} + \vareps^{-1} \|q_{\vareps,4}^2 \|_{L^{\infty}((0,T),L^{\gamma}(\oeps))} \le C,
\end{align*}
such that for all $\psieps \in L^1((0,T),L^{\frac{\gamma}{\gamma - 1}}(\oeps))$
\begin{align*}
\langle Q_{\vareps,4} , \psieps \rangle_{Y_{\vareps,4}} = \int_0^T \int_{\oeps } q_{\vareps,4} \psieps dx dt,
\end{align*}
where $q_{\vareps,4}:= q_{\vareps,4}^1 + q_{\vareps,4}^2$. 
Altogether, we obtain with $q_{\vareps} := q_{\vareps,3} + q_{\vareps,4}$ 
\begin{align*}
    \bigg| \int_0^T \int_{\oeps}& (p(\rhoeps) - q_{\vareps} ) \psieps dx dt \bigg| \le |I_{\vareps}^1 + I_{\vareps}^2 + I_{\vareps}^5|
    \\
    \le& C \bigg\{\vareps^{\frac{\lambda}{2}} \|\Beps(\partial_t \psieps)\|_{L^1((0,T),L^{\frac{2\gamma}{\gamma -1}}(\oeps))}   +  \vareps^{\lambda - 2\theta -1} \vareps \|\nabla \Beps (\psieps)\|_{L^{\infty}((0,T),L^{\frac{\gamma p}{(p-2)\gamma - p}}(\oeps))}
    \\
    &+ \vareps^{\frac{\lambda}{2}} \|\Beps(\psieps)\|_{W^{1,1}((0,T),L^{\frac{2\gamma}{\gamma -1}}(\oeps))} \bigg\}.
\end{align*}
\end{proof}

\begin{remark}\label{rem:improved_pressure_estimate}\
\begin{enumerate}
[label = (\roman*)]
\item In the estimate above we see that we need 
\begin{align*}
\psieps \in & W^{1,1}((0,T),L^{\frac{2\gamma}{\gamma - 1}}(\oeps)) \cap L^{\gamma}((0,T),L^{\frac{\gamma p}{(p-2)\gamma - p}}(\oeps)) \cap L^2((0,T),L^2(\oeps)) 
\\
&\cap L^1((0,T),L^{\frac{\gamma }{\gamma  -1}}(\oeps))  .
\end{align*}
However, the choice in the Proposition is enough since we have for $p  \in \left( \frac{2\gamma}{\gamma - 1},\min\{6,\frac{4\gamma}{(\gamma-1)(2 - s)}\}\right]$
\begin{align*}
 \frac{s\gamma p }{(p-2)\gamma - p} \geq \max\left\{2 , \frac{\gamma }{\gamma  -1}\right\}.
\end{align*}

\item The function $\theta(p) := \theta = \frac{3p -6}{2p} $ is increasing and positive for $p>2$ (we have $p> \frac{2\gamma}{\gamma -1}$). Hence, we have $\theta > \frac{3}{2\gamma}$. Further, from the assumption $\lambda > 1 + \frac{3}{\gamma}$ we get $\frac{\lambda}{s} - \frac{2\theta}{s} - 1>0$ if we choose  $s$ close to $1$ and $p $ close to $\frac{2\gamma}{\gamma - 1}$.

\item In \cite{hofer2025quantitative} the term $I_{\vareps}^2$ was estimated for $s = 1$. This leads to a bound for the test-function including the Bogovskii operator in $L^{\infty}$ in time, which causes difficulties for the strong two-scale convergence of the density. Therefore, we somehow interpolated between the bound for the density and the kinetic energy. 

\item The result can be modified to functions without mean value zero. However, for our purpose this is not necessary and we omit it.

\item In \cite[Lemma 3.2]{hofer2025quantitative} a decomposition of the pressure $p(\rhoeps)$ is derived. The estimates in the proof of Proposition \ref{prop:improved_pressure_estimate_Bogovskii} allow a similar decomposition. For our proof of the strong (two-scale) convergence of the density it is more appropriate to work with the stronger estimate $\eqref{ineq:improved_pressure_Bogovskii}$, where we can use the properties of the Bogovskii operator $\Beps$.

\end{enumerate}

\end{remark}

Let us finally formulate in the following Corollary the precise decomposition of $\qeps$ from Proposition \ref{prop:improved_pressure_estimate_Bogovskii}. This quantity will be crucial for the compactness of $p(\rhoeps)$, since their two-scale limits coincide.
\begin{corollary}\label{cor:decomposition_pressure}
The function $q_{\vareps} $ from Proposition \ref{prop:improved_pressure_estimate_Bogovskii} can be decomposed into
\begin{align*}
    \qeps  = \sum_{i=1}^4 q_{\vareps}^i \in  L^2((0,T),H^1(\oeps)) + L^{\infty}((0,T),W^{1,\gamma}(\oeps)) + \vareps\left( L^2((0,T)\times \oeps) + L^{\infty}((0,T),L^{\gamma}(\oeps)) \right),
\end{align*}
such that
\begin{align*}
\|\qeps^1\|_{L^2((0,T),H^1(\oeps))} + \|\qeps^2\|_{L^{\infty}((0,T),W^{1,\gamma}(\oeps))} + \vareps^{-1} \|\qeps^3\|_{L^2((0,T)\times \oeps)} + \vareps^{-1} \|\qeps^4\|_{L^{\infty}((0,T),L^{\gamma}(\oeps))} \le C
\end{align*}
for a constant $C>0$ independent of $\vareps$.
\end{corollary}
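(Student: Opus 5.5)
The corollary is essentially a restatement of what the proof of Proposition \ref{prop:improved_pressure_estimate_Bogovskii} already produces, so the plan is to read off the decomposition from that proof. There we set $q_{\vareps} := q_{\vareps,3} + q_{\vareps,4}$, with $q_{\vareps,3} = q_{\vareps,3}^1 + q_{\vareps,3}^2$ and $q_{\vareps,4} = q_{\vareps,4}^1 + q_{\vareps,4}^2$. I will define
\begin{align*}
q_{\vareps}^1 := q_{\vareps,3}^1, \qquad q_{\vareps}^2 := q_{\vareps,4}^1, \qquad q_{\vareps}^3 := q_{\vareps,3}^2, \qquad q_{\vareps}^4 := q_{\vareps,4}^2 .
\end{align*}
The claimed bounds are then exactly the two estimates obtained for $Q_{\vareps,3}$ and $Q_{\vareps,4}$.

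The step that deserves care is the duality argument producing these pieces, which the proof only sketches; I would spell it out as follows. For $Q_{\vareps,3}$, inequality \eqref{ineq:I_eps_3} bounds $\langle Q_{\vareps,3},\psieps\rangle$ by $C\|\psieps\|_{X}+C\vareps\|\psieps\|_{Z}$ with $X = L^2((0,T),H^1(\oeps)')$ and $Z = L^2((0,T)\times\oeps)$. Since $L^2((0,T)\times\oeps)$ embeds into $X$, we have $X\cap Z = Z$ with equivalent norms. Hence $Q_{\vareps,3}$ extends, via the stated fact $(X\cap Z)' = X' + Z'$, to a sum $x'+z'$ with $\|x'\|_{X'}\le C$ and $\|z'\|_{Z'}\le C\vareps$.

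Using reflexivity and the identification $L^2((0,T),H^1(\oeps)')' \cong L^2((0,T),H^1(\oeps))$ described before the proposition, $x'$ is represented by some $q_{\vareps,3}^1\in L^2((0,T),H^1(\oeps))$, and $z'$ by some $q_{\vareps,3}^2\in L^2((0,T)\times\oeps)$. In both cases the pairing is the integral $\int_0^T\int_{\oeps} q\,\psieps\,dx\,dt$. The identical argument for $Q_{\vareps,4}$ uses $X = L^1((0,T),W^{1,\gamma}(\oeps)')$ and $Z = L^1((0,T),L^{\frac{\gamma}{\gamma-1}}(\oeps))$. Their duals are $L^\infty((0,T),W^{1,\gamma}(\oeps))$ and $L^\infty((0,T),L^\gamma(\oeps))$, which is legitimate since $W^{1,\gamma}(\oeps)$ is reflexive and therefore has the Radon–Nikodym property.

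Finally, each piece lies in $L^1((0,T)\times\oeps)$ because $\oeps$ is bounded. This gives $q_{\vareps}\in L^1$ together with the stated uniform bound. The constants are independent of $\vareps$ because they come only from Proposition \ref{prop:apriori_basic} and the $\vareps$-uniform Bogovskii and Poincaré estimates (Lemma \ref{lem:Bogovskii_Beps}, Lemma \ref{lem:Poincare_inequality}).

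The main, mild, obstacle is making sure the splitting constants in the duality lemma carry over without $\vareps$-dependence. The lemma gives $\|x'\|\le C_X$ and $\|y'\|\le C_Y$ directly, with $C_Y = C\vareps$, so the factor $\vareps^{-1}$ in the claimed bound for $q_{\vareps}^3$ and $q_{\vareps}^4$ follows immediately.
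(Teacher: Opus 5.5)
Your proposal is correct and follows the paper. The corollary is read off from the proof of Proposition \ref{prop:improved_pressure_estimate_Bogovskii}, with $q_{\vareps}^1, q_{\vareps}^2, q_{\vareps}^3, q_{\vareps}^4$ being exactly the pieces $q_{\vareps,3}^1, q_{\vareps,4}^1, q_{\vareps,3}^2, q_{\vareps,4}^2$. These come from splitting $Q_{\vareps,3}$ and $Q_{\vareps,4}$ via $(X\cap Y)' = X' + Y'$ (with $C_Y = C\vareps$) and the reflexivity-based identification of the duals, which you spell out in more detail than the paper does.
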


As in the proof of Proposition \ref{prop:improved_pressure_estimate_Bogovskii}, we obtain the following estimate for the full pressure for more specific test-functions.
\begin{lemma}\label{lem:refined_pressure_estimate}
For every $\psieps$ with 
\begin{align*}
\psieps \in H^1((0,T),L^{\frac{6\gamma}{5\gamma - 6}}(\oeps)) \cap L^2((0,T),L^{\frac{3\gamma}{2\gamma -3}}(\oeps))
\end{align*}
having mean value zero and $\psi \in \mathcal{D}(0,T)$ it holds that
\begin{align*}
\int_0^T \psi(t) \int_{\oeps} p(\rhoeps) \psieps dx dt \le& C \bigg( \vareps^{\lambda - 1} \|\psi\|_{L^{\infty}(0,T)} \| \Beps(\partial_t \psieps)\|_{L^2((0,T),L^{\frac{6\gamma}{5\gamma - 6}}(\oeps))} 
\\
&+ \vareps^{\lambda - 2} \|\psi\|_{L^{\infty}(0,T)}  \|\nabla \Beps (\psieps)\|_{L^{\infty}((0,T),L^{\frac{3\gamma}{2\gamma -3}}(\oeps))}
\\
&+\vareps \|\psi\|_{L^{\infty}(0,T)} \|\nabla \Beps(\psieps) \|_{L^2((0,T)\times \oeps)} + \|\psi\|_{L^{\infty}(0,T)} \|\Beps (\psieps)\|_{L^1((0,T),L^{\frac{\gamma }{\gamma  -1}}(\oeps))}
\\
&+ \vareps^{\frac{\lambda}{2}} \|\psi'\|_{L^1(0,T)} \|\Beps(\psieps)\|_{L^{\infty}((0,T),L^{\frac{2\gamma}{\gamma -1}}(\oeps))} 
\bigg).
\end{align*}
\end{lemma}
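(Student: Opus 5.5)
The plan is to repeat the proof of Proposition \ref{prop:improved_pressure_estimate_Bogovskii}, now testing the momentum equation \eqref{eq:weak_impulse} with $\phieps := \psi(t)\,\Beps(\psieps)$. Since $\psi \in \mathcal{D}(0,T)$, this test function vanishes near $t=0$ and $t=T$, so the boundary terms (the analogue of $I_\vareps^5$) disappear. Using $\div \Beps(\psieps) = \psieps$, which holds by the mean-value zero condition, and $\partial_t \Beps(\psieps) = \Beps(\partial_t \psieps)$, which holds by linearity and time-independence of $\Beps$, we obtain
\begin{align*}
\int_0^T \psi \int_{\oeps} p(\rhoeps)\psieps \,dx\,dt =& -\int_0^T\!\!\int_{\oeps} \vareps^\lambda \rhoeps\ueps\cdot\big(\psi'\Beps(\psieps) + \psi\Beps(\partial_t\psieps)\big) - \int_0^T\!\!\int_{\oeps}\psi\,\vareps^\lambda\rhoeps\ueps\otimes\ueps:\nabla\Beps(\psieps)
\\
&+\int_0^T\!\!\int_{\oeps}\psi\,\vareps^2\Ss(\nabla\ueps):\nabla\Beps(\psieps) - \int_0^T\!\!\int_{\oeps}\psi\,\rhoeps\nabla F\cdot\Beps(\psieps).
\end{align*}
Strictly speaking, $\phieps$ is not smooth, so the identity is first derived for smooth test functions and then extended by density to the stated regularity.

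The five terms are then estimated one at a time, pulling out $\|\psi\|_{L^\infty}$ or $\|\psi'\|_{L^1}$ as needed.

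1. The $\psi'$ term is handled exactly as $I_\vareps^5$. By Hölder with exponents $\frac{1}{2\gamma}+\frac12+\frac{\gamma-1}{2\gamma}=1$ and the bounds $\sqrt{\rhoeps}\in L^\infty L^{2\gamma}$ and $\vareps^{\lambda/2}\sqrt{\rhoeps}\ueps\in L^\infty L^2$ from Proposition \ref{prop:apriori_basic}, it is bounded by $\vareps^{\lambda/2}\|\psi'\|_{L^1}\|\Beps(\psieps)\|_{L^\infty L^{\frac{2\gamma}{\gamma-1}}}$.

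2. The $\Beps(\partial_t\psieps)$ term needs a different pairing from $I_\vareps^1$ in order to reach the exponent $\vareps^{\lambda-1}$. I would write $\rhoeps\ueps$ with $\rhoeps\in L^\infty L^\gamma$ and $\ueps\in L^2L^6$. By Sobolev/Poincaré (Lemma \ref{lem:Poincare_inequality}) and Proposition \ref{prop:apriori_basic}, $\|\ueps\|_{L^2L^6}\le C\|\nabla\ueps\|_{L^2L^2}\le C\vareps^{-1}$. Then $\frac1\gamma+\frac16+\frac{5\gamma-6}{6\gamma}=1$, so this term is bounded by $C\vareps^{\lambda-1}\|\psi\|_\infty\|\Beps(\partial_t\psieps)\|_{L^2L^{\frac{6\gamma}{5\gamma-6}}}$.

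3. For the convective term I would use $\rhoeps\in L^\infty L^\gamma$ and $|\ueps|^2\in L^1L^3$ with $\|\ueps\|^2_{L^2L^6}\le C\vareps^{-2}$. The spatial exponents satisfy $\frac1\gamma+\frac13+\frac{2\gamma-3}{3\gamma}=1$, and the time pairing is $L^\infty\cdot L^1\cdot L^\infty$. This gives $C\vareps^{\lambda-2}\|\psi\|_\infty\|\nabla\Beps(\psieps)\|_{L^\infty L^{\frac{3\gamma}{2\gamma-3}}}$.

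4. The viscous term is treated exactly as $I_\vareps^3$, using $\vareps\|\nabla\ueps\|_{L^2}\le C$, which gives $C\vareps\|\psi\|_\infty\|\nabla\Beps(\psieps)\|_{L^2L^2}$.

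5. The force term is treated as $I_\vareps^4$ using $\rhoeps\in L^\infty L^\gamma$, giving $C\|\psi\|_\infty\|\Beps(\psieps)\|_{L^1L^{\frac{\gamma}{\gamma-1}}}$.

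I expect the only delicate point to be the justification step: checking that $\psi\Beps(\psieps)$ is an admissible test function in \eqref{eq:weak_impulse} under the stated regularity. This needs the mapping properties of $\Beps$ from Appendix \ref{sec:Bogovskii_operator}, with $\Beps(\psieps)$ lying in $W^{1,q}_0$ for the relevant $q$. It also needs the time-integrability implied by $\psieps \in H^1((0,T),L^{\frac{6\gamma}{5\gamma-6}}) \cap L^2 L^{\frac{3\gamma}{2\gamma-3}}$, so that every pairing above is finite and density arguments apply.
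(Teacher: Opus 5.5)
Your proposal is correct and follows the paper's proof essentially verbatim. You test \eqref{eq:weak_impulse} with $\psi\,\Beps(\psieps)$ and bound the same five terms with the same H\"older pairings: $\rhoeps\in L^\infty L^\gamma$ against $\ueps\in L^2L^6$, with $\|\ueps\|_{L^2L^6}\le C\vareps^{-1}$, for the time-derivative and convective terms, and the $\sqrt{\rhoeps}$, $\sqrt{\rhoeps}\,\ueps$ splitting for the $\psi'$ term. Your minus sign on the $\psi'$ term is actually the right one (the paper writes a plus), though this is immaterial since only absolute values are estimated.
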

\begin{proof}
We follow the arguments in the proof of Proposition \ref{prop:improved_pressure_estimate_Bogovskii}, but use slightly different norms and test-functions with compact support in $(0,T)$. 
We have
\begin{align*}
&\int_0^{T} \psi(t) \int_{\oeps} p(\rhoeps) \psieps dx dt \\
=& -\int_0^{T} \psi(t)\int_{\oeps} \vareps^{\lambda} \rhoeps \ueps \cdot \Beps(\partial_t \psieps) dx dt - \int_0^T \psi(t) \int_{\oeps} \vareps^{\lambda} \rhoeps \ueps \otimes \ueps : \nabla \Beps (\psieps) dx dt
\\
&+ \int_0^{T} \psi(t)\int_{\oeps} \vareps^2 \mathbb{S}(\nabla \ueps) : \nabla \Beps(\psieps) dx dt - \int_0^{T} \psi(t) \int_{\oeps}\rhoeps \nabla F \cdot \Beps (\psieps) dx dt 
\\
&+ \int_0^T \psi'(t) \int_{\oeps} \vareps^{\lambda} \rhoeps \ueps \cdot \Beps(\psieps)dx dt 
\\
 =&: \sum_{i=1}^5 I_{\vareps}^i.
\end{align*}
The main difference, compared to the proof of Proposition \ref{prop:improved_pressure_estimate_Bogovskii}, is the last term $I_{\vareps}^5$. The other terms include just the additional factor $\psi(t)$ (we can consider $\psi \psieps$ instead of $\psieps$ in the previous proof). Now, we estimate $I_{\vareps}^1$ in the following way:
\begin{align*}
|I_{\vareps}^1 | &\le \vareps^{\lambda}\|\psi\|_{L^{\infty}(0,T)} \|\rhoeps\|_{L^{\infty}((0,T),L^{\gamma}(\oeps)} \|\ueps\|_{L^2((0,T),L^6(\oeps))} \| \Beps(\partial_t \psieps)\|_{L^2((0,T),L^{\frac{6\gamma}{5\gamma - 6}}(\oeps))} 
\\
&\le C\vareps^{\lambda - 1} \|\psi\|_{L^{\infty}(0,T)} \| \Beps(\partial_t \psieps)\|_{L^2((0,T),L^{\frac{6\gamma}{5\gamma - 6}}(\oeps))}.
\end{align*}
The term $I_{\vareps}^2$ is  estimated by
\begin{align*}
|I_{\vareps}^2| \le& \vareps^{\lambda}  \|\psi\|_{L^{\infty}(0,T)}\|\rhoeps\|_{L^{\infty}((0,T),L^{\gamma}(\oeps))} \|\ueps\|^2_{L^2((0,T),L^6(\oeps))} \|\nabla \Beps(\psieps)\|_{L^{\infty}((0,T),L^{\frac{3\gamma}{2\gamma-3}}(\oeps))}
\\
\le& C \vareps^{\lambda - 2} \|\psi\|_{L^{\infty}(0,T)} \|\nabla \Beps(\psieps)\|_{L^{\infty}((0,T),L^{\frac{3\gamma}{2\gamma-3}}(\oeps))}.
\end{align*}
The terms $I_{\vareps}^3$ and $I_{\vareps}^4$ can be treated as in the proof of Proposition \ref{prop:improved_pressure_estimate_Bogovskii}. Finally, for $I_{\vareps}^5$ we have (similar to $I_{\vareps}^1$ in the proof of Proposition \ref{prop:improved_pressure_estimate_Bogovskii})
\begin{align*}
|I_{\vareps}^5| &\le  \vareps^{\lambda}\|\psi'\|_{L^1(0,T)}\|\sqrt{\rhoeps}\|_{L^{\infty}((0,T),L^{2\gamma}(\oeps))} \|\sqrt{\rhoeps} \ueps\|_{L^{\infty}((0,T),L^2(\oeps))} \|\Beps(\psieps)\|_{L^{\infty}((0,T),L^{\frac{2\gamma}{\gamma -1}}(\oeps))} 
\\
&\le C \vareps^{\frac{\lambda}{2}} \|\psi'\|_{L^1(0,T)} \|\Beps(\psieps)\|_{L^{\infty}((0,T),L^{\frac{2\gamma}{\gamma -1}}(\oeps))} .
\end{align*}
This gives the desired result.
\end{proof}

As a consequence, we can derive the following  uniform refined pressure estimate:
\begin{proposition}\label{prop:apriori_pressure_refined}
There exists $0<\nu$ such that 
\begin{align*}
\int_0^T \int_{\oeps} \rhoeps^{\gamma + \nu} dx dt \le C
\end{align*}
for a constant $C>0$ independent of $\vareps$. In particular, it holds that
\begin{align*}
\|p(\rhoeps)\|_{L^{\frac{\gamma + \nu}{\gamma}}((0,T)\times \oeps)} \le C . 
\end{align*}
In particular, for $\gamma\le 3$ we can choose $\nu = \frac23 \gamma - 1$.
\end{proposition}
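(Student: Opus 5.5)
Following the standard improved-integrability argument for compressible Navier--Stokes, we test the pressure with $\psieps = \psi(t)\big(\rhoeps^{\nu} - \langle \rhoeps^{\nu}\rangle_{\oeps}\big)$, where $\langle\cdot\rangle_{\oeps}$ denotes the spatial mean over $\oeps$ and $\psi\in\mathcal D(0,T)$ with $0\le\psi\le 1$. We use Lemma \ref{lem:refined_pressure_estimate}, with the time derivative handled through the renormalized continuity equation \eqref{eq:renormalized_continuity_equation}. To justify the test, we replace $\rhoeps^\nu$ by a truncation $T_k(\rhoeps)^\nu$ (or mollify in time), which satisfies the growth condition required for $b$. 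We then let $k\to\infty$ at the end by monotone convergence.

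The left-hand side is
\[
\int_0^T\psi\int_{\oeps}\rhoeps^{\gamma+\nu}\,dx\,dt-\int_0^T\psi\,\langle\rhoeps^\nu\rangle\int_{\oeps}p(\rhoeps)\,dx\,dt .
\]
The second term is bounded by Proposition \ref{prop:apriori_basic} as soon as $\nu\le\gamma$. It remains to bound the five terms on the right of Lemma \ref{lem:refined_pressure_estimate}, using the Bogovskii estimates of the appendix: $\vareps\|\nabla\Beps f\|_{L^r}+\|\Beps f\|_{L^r}\le C\|f\|_{L^r}$, with the $\vareps$-scaled gradient.

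\emph{Non-time terms.} The viscous term satisfies
\[
\vareps\|\nabla\Beps\psieps\|_{L^2}\le C\|\rhoeps^\nu\|_{L^2_{t,x}},
\]
which is bounded if $2\nu\le\gamma$. The force term needs $\|\rhoeps^\nu\|_{L^{\gamma/(\gamma-1)}}$, which is bounded if $\nu\le\gamma-1$. The convective term carries the factor $\vareps^{\lambda-2}\cdot\vareps^{-1}\|\rhoeps^\nu\|_{L^\infty_tL^{3\gamma/(2\gamma-3)}}$. This requires $\nu\cdot\frac{3\gamma}{2\gamma-3}\le\gamma$, i.e.\ $\nu\le\frac{2\gamma-3}{3}=\frac23\gamma-1$, which is exactly the claimed exponent. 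The prefactor $\vareps^{\lambda-3}$ is harmless when $\lambda\ge3$, which is implied by $\lambda>1+3/\gamma$ for $\gamma\le 3/2$ but not in general. So here I would instead use the interpolated version of $I^2$ from Proposition \ref{prop:improved_pressure_estimate_Bogovskii}, with exponent $\vareps^{\lambda/s-2\theta/s-1}\ge$ bounded, together with the $L^{s'}$-in-time norm. The $\psi'$ term is bounded by $\vareps^{\lambda/2-1}\|\rhoeps^\nu\|_{L^\infty L^{2\gamma/(\gamma-1)}}$, which is fine for $\nu\le\frac{\gamma-1}{2}$, and in general by reusing the $\vareps^\lambda\sqrt{\rhoeps}\sqrt{\rhoeps}\ueps$ splitting.

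\emph{Time-derivative term (main obstacle).} By \eqref{eq:renormalized_continuity_equation},
\[
\partial_t\rhoeps^\nu=-\div(\rhoeps^\nu\ueps)-(\nu-1)\rhoeps^\nu\div\ueps .
\]
The divergence part is tangential on $\geps$, since $\ueps=0$ there. Hence $\Beps(\div(\rhoeps^\nu\ueps))$ is estimated in $L^r$ by $\|\rhoeps^\nu\ueps\|_{L^r}$ with no loss of $\vareps^{-1}$; this is precisely the advantage of keeping $\Beps$ in the estimates. Combined with $\|\ueps\|_{L^2L^6}\le C\vareps^{-1}$, the product with $\vareps^\lambda\rhoeps\ueps$ gives at worst $\vareps^{\lambda-2}$. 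The remaining part $\rhoeps^\nu\div\ueps$ gives $\|\Beps(\cdot)\|_{L^r}\le C\|\rhoeps^\nu\div\ueps\|_{L^r}\le C\vareps^{-1}\|\rhoeps^\nu\|$, again with overall power $\vareps^{\lambda-2}\ge$ bounded. The mean-value correction is handled likewise, and the exponent bookkeeping is again limited by $\nu\le\frac23\gamma-1$.

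Collecting the bounds and letting $\psi\uparrow 1$ gives the integrability claim. The statement for $p(\rhoeps)$ then follows directly.
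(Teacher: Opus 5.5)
Your strategy is the paper's own: the test function $\rhoeps^\nu$ minus its mean in Lemma \ref{lem:refined_pressure_estimate}, $\partial_t\psieps$ via the renormalized equation, and the Bogovskii bounds. You correctly notice that the convective term only gives $\vareps^{\lambda-3}$, which is not uniform for $1+\frac3\gamma<\lambda<3$; the paper's computation itself ends with $C(1+\vareps^{\lambda-3})$. However, neither of your two devices removes this factor.

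The decisive error is in the time-derivative term. For the part $\rhoeps^\nu\div\ueps$ you use $\|\Beps f\|_{L^r}\le C\|f\|_{L^r}$, where $r$ is dual to $\vareps^\lambda\rhoeps\ueps\in L^\infty_tL^\gamma_x\cdot L^2_tL^p_x$. For any $p\le 6$ this means $\frac1r=1-\frac1\gamma-\frac1p\le\frac56-\frac1\gamma<\frac12$ when $\gamma<3$. Since $\div\ueps$ is only $L^2$ in space, $\rhoeps^\nu\div\ueps\notin L^r$. For $\nu=\frac23\gamma-1$ what you actually have is $\|\rhoeps^\nu\div\ueps\|_{L^2_tL^{6\gamma/(7\gamma-6)}_x}\le C\vareps^{-1}$. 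Reaching $L^{6\gamma/(5\gamma-6)}$ then requires Sobolev plus the lossy gradient bound of Corollary \ref{cor:Bogovskii}: $\|\Beps f\|_{L^{6\gamma/(5\gamma-6)}}\le C\|\nabla\Beps f\|_{L^{6\gamma/(7\gamma-6)}}\le C\vareps^{-1}\|f\|_{L^{6\gamma/(7\gamma-6)}}$. The loss-free bound holds only for $\Beps(\div g)$ with $g$ tangential on $\partial\oeps$, i.e.\ for the transport part. So this term is of order $\vareps^{\lambda-3}$, exactly the paper's bound, not $\vareps^{\lambda-2}$. More generally, any choice of $p$ combined with Lebesgue interpolation for $\Beps$ gives the power $\lambda-1-\frac{3(\nu+1)}{\gamma}$ for this term.

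Your remedy for the convective term fails for a related reason. The interpolated bound of Proposition \ref{prop:improved_pressure_estimate_Bogovskii} needs $\rhoeps^\nu\in L^{s'}_tL^r_x$ with $r=\frac{s\gamma p}{(p-2)\gamma-p}$, i.e.\ $\nu\le\frac{(p-2)\gamma-p}{sp}$. This reaches $\frac23\gamma-1$ only for $s=1$, $p=6$, where the prefactor is again $\vareps^{\lambda-3}$. So this route, yours or the paper's, yields $\nu=\frac23\gamma-1$ only when $\lambda\ge 3$.

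For $1+\frac3\gamma<\lambda<3$, take $s=1$, $p=\frac{6}{4-\lambda}$ in the convective term. Together with the power $\lambda-1-\frac{3(\nu+1)}{\gamma}$ for the time-derivative term, this gives a uniform bound for $\nu=\frac{\gamma(\lambda-1)}{3}-1$, which is positive exactly because $\lambda>1+\frac3\gamma$. This settles the existence of some $\nu>0$, but not the value $\frac23\gamma-1$. That value would need $\lambda\ge3$ or a genuinely better estimate for $\Beps(\rhoeps^\nu\div\ueps)$.
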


\begin{proof}
As  test-functions in Lemma \ref{lem:refined_pressure_estimate} we choose
\begin{align*}
\psieps(t,x) := \left(\rhoeps^{\nu} - \frac{1}{|\oeps|} \int_{\oeps} \rhoeps^{\nu} dx \right)
\end{align*}
and $\psi \in \mathcal{D}(0,T)$.
We may proceed exactly as in the proof of Proposition \ref{prop:strong_compactness_rhoeps} later, where we will use inequality \eqref{ineq:aux_error_pressure_strong_conv_density} instead. Since the argument is entirely analogous, and in fact even simpler than the one given for Proposition \ref{prop:strong_compactness_rhoeps}, we omit the full proof here and will present the detailed steps later.  Additional terms here are (for the sake of simplicity we give here a formal argument and ignore the regularization in time)
\begin{align*}
\vareps \|\nabla &\Beps(\psieps) \|_{L^2((0,T)\times \oeps)} +  \|\Beps (\psieps)\|_{L^1((0,T),L^{\frac{\gamma }{\gamma  -1}}(\oeps))}
\\
&\le C\left(\|\rhoeps^{\nu}\|_{L^2((0,T)\times \oeps)} + \|\rhoeps^{\nu}\|_{L^1((0,T),L^{\frac{\gamma}{\gamma -1}}(\oeps))}\ \right) \le C\
\end{align*}
for $\nu \le \gamma -1 $ and $\nu \le \frac{\gamma}{2}$. Further, we easily obtain (for $\nu \le \gamma$)
\begin{align*}
   \left| \int_0^T \psi \int_{\oeps} p(\rhoeps) \int_{\oeps} \rhoeps^{\nu} dz dx \right| \le  C \|\psi\|_{L^{\infty}(0,T)}.
\end{align*}
Next, we have  
\begin{align*}
\vareps^{\lambda -2} \|\nabla \Beps(\psieps)\|_{L^{\infty}((0,T),L^{\frac{3\gamma}{2\gamma -3}}(\oeps))} \le C \vareps^{\lambda -3} \|\rhoeps^{\nu}\|_{L^{\infty}((0,T),L^{\frac{3\gamma}{2\gamma -3}}(\oeps))} \le C \vareps^{\lambda -3}
\end{align*}
for $\nu \le \frac23 \gamma -1$. Further, we have
\begin{align*}
\vareps^{\frac{\lambda}{2}} \|\Beps(\psieps)\|_{L^{\infty}((0,T),L^{\frac{2\gamma}{\gamma -1}}(\oeps))} \le C \vareps^{\frac{\lambda}{2}} \|\rhoeps^{\nu}\|_{L^{\infty}((0,T),L^{\frac{2\gamma}{\gamma -1}}(\oeps))} \le C
\end{align*}
for $\nu \le \frac12\gamma - \frac12$. Finally, we have  formally (again neglecting the regularization in time) 
\begin{align*}
    \partial_t \psieps = -\div(\rhoeps^{\nu} \ueps) + (1-\nu) \Big[\rhoeps^{\nu} \div \ueps - \frac{1}{|\oeps|} \int_{\oeps} \rhoeps^{\nu} \div \ueps dx \Big] 
\end{align*}
and therefore  with Corollary \ref{cor:Bogovskii}
\begin{align*}
&\vareps^{\lambda -1} \|\Beps (\partial_t \psieps)\|_{L^2((0,T),L^{\frac{6\gamma}{5\gamma - 6}}(\oeps))} \\
\le & C \vareps^{\lambda -1} \left( \|\Beps(\div(\rhoeps^{\nu}\ueps ))\| _{L^2((0,T),L^{\frac{6\gamma}{5\gamma - 6}}(\oeps))} + \bigg\|\Beps\Big[\rhoeps^{\nu} \div \ueps -\frac{1}{|\oeps|} \int_{\oeps} \rhoeps^{\nu} \div \ueps dx \Big]    \bigg\|_{L^2((0,T),L^{\frac{6\gamma}{5\gamma - 6}}(\oeps))} \right)
\\
\le & C \vareps^{\lambda -1} \left(\|\rhoeps^{\nu}\ueps \| _{L^2((0,T),L^{\frac{6\gamma}{5\gamma - 6}}(\oeps))} + \vareps^{-1} \|\rhoeps^{\nu} \div \ueps \|_{L^2((0,T),L^{\frac{6\gamma}{7\gamma -6}}(\oeps))} \right)
\\
\le & C \vareps^{\lambda -1} \|\rhoeps^{\nu}\|_{L^{\infty}((0,T),L^{\frac{3\gamma}{2\gamma -3}}(\oeps))} \left( \|\ueps\|_{L^2((0,T),L^6(\oeps))} + \vareps^{-1} \|\div \ueps\|_{L^2((0,T),L^2(\oeps))} \right)
\\
\le & C \vareps^{\lambda -3} \|\rhoeps^{\nu}\|_{L^{\infty}((0,T),L^{\frac{3\gamma}{2\gamma -3}}(\oeps))} \le C\vareps^{\lambda -3}
\end{align*}
for $\nu \le \frac23 \gamma -1$.

Altogether, we obtain (after the regularization process and $\psi \to 1$)
\begin{align*}
\int_0^T \int_{\oeps} \rhoeps^{\gamma + \nu} dx dt \le& C \left( 1  +  \vareps^{\lambda -3}\right).
\end{align*}
This gives the desired result.
\end{proof}

\section{Compactness results}
\label{sec:compactness}

In this section, we derive the compactness results for the microscopic solution, which are necessary to pass to the limit $\vareps \to 0$ and obtain the macroscopic limit problem. As the underlying topology we use the two-scale convergence, see \cite{Allaire_TwoScaleKonvergenz,Nguetseng}, respectively the equivalent definition via the weak/strong convergence of the associated unfolded sequence, see \cite{CioranescuGrisoDamlamian2018}.

\subsection{The two-scale convergence}\label{Sec-Two-scale} 

We start with the definition of the two-scale convergence of a sequence, which was first introduced (in the stationary case) in \cite{Nguetseng} and later further developed in \cite{Allaire_TwoScaleKonvergenz}. We also refer to the overview \cite{LukkassenNguetsengWallTSKonvergenz}.

\begin{definition}
Let $p,s \in [1,\infty)$. We say a sequence $\weps \in L^s((0,T),L^p( \Omega))$ converges (weakly) in the two-scale sense to a limit function $w_0 \in L^s((0,T), L^p( \Omega \times Y))$ if for all $\phi \in L^{s'}((0,T), L^{p'}( \Omega , C_{\per}^0(Y) ) )$ it holds that 
\begin{align*}
\lim_{\vareps \to 0} \int_0^T \int_{\Omega} \weps (t,x) \phi\bftx dx dt = \int_0^T \int_{\Omega} \int_Y w_0(t,x,y) \phi(t,x,y) dy dx dt.
\end{align*}
We write $\weps \ratsw{s,p} w_0$, and $\weps \ratsw{p} w_0$ for $p=s$. If additionally it holds that
\begin{align*}
\lim_{\vareps\to 0} \|\weps\|_{L^s((0,T), L^p( \Omega))} = \|w_0\|_{L^s((0,T), L^p( \Omega \times Y))}
\end{align*}
we say the sequence converges strongly in the two-scale sense and write $\weps \ratss{s,p} w_0$, respectively $\weps \ratss{p} w_0$ for $p=s$.
\end{definition}

We have the following important two-scale compactness results (see \cite{Allaire_TwoScaleKonvergenz,LukkassenNguetsengWallTSKonvergenz}):
\begin{lemma}\label{lem:two_scale_compactness_basic} Let $p \in (1,\infty)$.
\begin{enumerate}
[label = (\roman*)]
\item Every bounded sequence in $L^p((0,T)\times \Omega)$ has a (weakly) two-scale convergent subsequence.

\item Let $\weps \in L^p((0,T),W^{1,p}(\Omega))$ with 
\begin{align*}
\|\weps\|_{L^p((0,T)\times \Omega)} + \vareps \|\nabla \weps \|_{L^p((0,T)\times \Omega)} \le C
\end{align*}
for a constant $C>0$ independent of $\vareps$. Then there exists $w_0 \in L^p((0,T)\times \Omega,W^{1,p}_{\per}(Y))$ such that up to a subsequence
\begin{align*}
\weps \ratsw{p} w_0,\qquad \vareps \nabla \weps \ratsw{p} \nabla_y w_0.
\end{align*}
\end{enumerate}
\end{lemma}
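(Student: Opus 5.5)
Both parts are classical two-scale compactness results (Nguetseng, Allaire), carried over to the time-dependent setting. The cleanest route is through the unfolding operator $\teps$, since two-scale convergence of $\weps$ is equivalent to weak convergence of $\teps \weps$ in $L^p((0,T)\times\Omega\times Y)$. Here $\teps\weps(t,x,y) := \weps(t,\vareps[x/\vareps] + \vareps y)$. Because $\vareps^{-1}\in\N$ and $a,b\in\Z^3$, the domain $\Omega$ is an exact union of $\vareps$-cells. Hence $\teps$ is an isometry up to the factor $|Y|=1$, with $\|\teps \weps\|_{L^p((0,T)\times\Omega\times Y)} = \|\weps\|_{L^p((0,T)\times \Omega)}$, and no boundary layer appears.

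\textbf{Part (i).} A bounded sequence $\weps$ gives a bounded sequence $\teps\weps$ in the reflexive space $L^p((0,T)\times\Omega\times Y)$, $1<p<\infty$. So a subsequence converges weakly to some $w_0$. For an admissible test function $\phi\in L^{p'}((0,T)\times\Omega, C^0_{\per}(Y))$, the unfolding integration identity gives
\[
\int_0^T\!\!\int_\Omega \weps(t,x)\phi\bftx\,dx\,dt = \int_0^T\!\!\int_\Omega\!\int_Y \teps\weps \,\teps\bigl(\phi(\cdot,\cdot,\tfrac{\cdot}{\vareps})\bigr)\,dy\,dx\,dt,
\]
and $\teps(\phi(\cdot,\cdot,\cdot/\vareps))(t,x,y)=\phi(t,\vareps[x/\vareps]+\vareps y,y)$. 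This last function converges strongly to $\phi$ in $L^{p'}$. One sees this by density of continuous functions in $L^{p'}((0,T)\times\Omega,C^0_{\per}(Y))$, together with the uniform bound $\|\teps\phi(\cdot,\cdot/\vareps)\|_{L^{p'}}\le\|\phi\|_{L^{p'}(C^0)}$. Weak times strong convergence then gives the two-scale limit. (The definition in the paper uses $L^s(L^p)$ with $s=p$ here.)

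\textbf{Part (ii).} By (i), the bounded sequences $\weps$ and $\vareps\nabla\weps$ have, along a subsequence, two-scale limits $w_0$ and $\xi_0\in L^p((0,T)\times\Omega\times Y)^3$. Equivalently, $\teps\weps\rightharpoonup w_0$ and $\teps(\vareps\nabla\weps)=\nabla_y\teps\weps\rightharpoonup\xi_0$. Consequently $\teps\weps$ is bounded in $L^p((0,T)\times\Omega,W^{1,p}(Y))$, so $w_0\in L^p((0,T)\times\Omega,W^{1,p}(Y))$ and $\nabla_y w_0=\xi_0$.

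The step I expect to be the main obstacle is proving $Y$-periodicity of $w_0$. For this I would test with $\vareps\nabla\weps\cdot\Phi(t,x,x/\vareps)$, where $\Phi\in C_0^\infty((0,T)\times\Omega,C^\infty_{\per}(Y))^3$, and integrate by parts:
\[
\int \vareps\nabla\weps\cdot\Phi\,dx\,dt = -\int \weps\bigl(\vareps\div_x\Phi + \div_y\Phi\bigr)\bftx\,dx\,dt .
\]
In the limit this yields $\int\xi_0\cdot\Phi = -\int w_0\div_y\Phi$ for all periodic $\Phi$. Combined with $\xi_0=\nabla_y w_0$, integrating by parts in $y$ makes the boundary terms $\int_{\partial Y} w_0\,\Phi\cdot n\,d\sigma$ vanish for all periodic $\Phi$. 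This forces equal traces of $w_0$ on opposite faces, hence $w_0\in W^{1,p}_{\per}(Y)$. Equivalently, periodicity can be checked directly with unfolding: $\teps\weps(x,y+e_i)-\teps\weps(x+\vareps e_i,y)=0$, and the shift in $x$ is handled by testing against smooth functions in $x$.
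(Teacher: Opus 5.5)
Your proof is correct, but it follows a different route from the one the paper relies on. The paper gives no proof of this lemma; it cites the classical results of Allaire and Lukkassen--Nguetseng--Wall, which argue by duality.

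\begin{itemize}
\item Classically, (i) goes through the bound $|\int \weps\, \phi(t,x,\tfrac{x}{\vareps})| \le C\|\phi\|_{L^{p'}((0,T)\times\Omega,C^0_{\per}(Y))}$. One then uses weak-$\ast$ compactness in the dual of this separable space. Finally, the limit functional is identified as an $L^p((0,T)\times\Omega\times Y)$ function via $\|\phi(\cdot,\cdot,\tfrac{\cdot}{\vareps})\|_{L^{p'}} \to \|\phi\|_{L^{p'}}$.
\item Classically, (ii) uses the same integration by parts against $\Phi(t,x,\tfrac{x}{\vareps})$ that you use.
\end{itemize}

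Your unfolding argument replaces the Banach--Alaoglu/Riesz step with two simpler ingredients: plain weak compactness in the reflexive space $L^p((0,T)\times\Omega\times Y)$, and strong convergence of the unfolded test functions. It also gives $w_0\in L^p((0,T)\times\Omega,W^{1,p}(Y))$ with $\nabla_y w_0=\xi_0$ for free, from $\nabla_y\teps\weps=\vareps\teps\nabla\weps$. As a bonus, it proves one direction of the two-scale/unfolding equivalence that the paper states later.

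The price is twofold. First, you use essentially that $\Omega$ is an exact union of $\vareps$-cells. This holds here because $a,b\in\Z^3$ and $\vareps^{-1}\in\N$, whereas the duality proof works on any bounded $\Omega$. Second, unfolded functions are not periodic in $y$, so you have to recover periodicity separately.

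You can shorten that last step. The identity $\int\xi_0\cdot\Phi=-\int w_0\,\div_y\Phi$ for all $Y$-periodic $\Phi$ already says that $\xi_0(t,x,\cdot)$ is the distributional gradient of $w_0(t,x,\cdot)$ on the torus. That gives $w_0\in W^{1,p}_{\per}(Y)$ directly, without any trace argument. If you keep the trace route, add the standard remark that closes it. A $W^{1,p}(Y)$ function with matching traces on opposite faces has a periodic extension in $W^{1,p}_{\mathrm{loc}}(\R^3)$. Mollifying that extension shows the function lies in the closure of smooth periodic functions, which is how the paper defines $W^{1,p}_{\per}(Y)$.
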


It is well-known, that a product between a weakly and a strongly two-scale convergent sequence converges in the distributional sense to the product of the limit functions.

\subsection{The unfolding operator}\label{sec:unfolding}

We give the definition of the unfolding operator and summarize some important properties. For details, we refer to  \cite{CioranescuGrisoDamlamian2018}.
For $p \in [1,\infty]$ we define the unfolding operator
\begin{align*}
\teps: L^p((0,T)\times \Omega ) \rightarrow L^p((0,T)\times \Omega \times Y), \qquad \teps \weps(t,x,y):= \weps\left( t, \vareps \left[\fxe \right] + \vareps y \right).
\end{align*}
In the same way, we define the unfolding operator (with the same notation) as an operator  $\teps: L^p((0,T)\times \oeps ) \rightarrow L^p((0,T)\times \Omega \times Y_f)$.
We have the following important properties of the unfolding operator, see \cite{CioranescuGrisoDamlamian2018}:
\begin{lemma} Let $p \in [1,\infty)$.
\begin{enumerate}
[label = (\roman*)]
\item For every $\weps \in L^p((0,T)\times \Omega)$ it holds that 
\begin{align*}
\int_0^T \int_{\Omega} \int_{Y} \teps \weps dy dx dt = \int_0^T \int_{\Omega} \weps dx dt.
\end{align*}
In particular, it holds that
\begin{align*}
\| \teps \weps \|_{L^p((0,T)\times \Omega \times Y)} = \|\weps \|_{L^p((0,T)\times \Omega)}.
\end{align*}

\item For $\weps \in L^p((0,T),W^{1,p}(\Omega))$ we have $\teps \weps \in L^p((0,T)\times \Omega,W^{1,p}(Y))$ with $\nabla_y \teps \weps = \vareps \teps \nabla \weps$.
\end{enumerate}
The results are also valid if we replace $\Omega$ with $\oeps$ and $Y$ with $Y_f$.
\end{lemma}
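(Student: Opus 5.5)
The two identities are standard properties of the unfolding operator; I would prove them by decomposing $\Omega$ into the $\vareps$-cells $\vareps(Y+k)$, $k\in K_{\vareps}$, and changing variables in each cell. Since $\Omega=(a,b)$ with $a,b\in\Z^3$ and $\vareps^{-1}\in\N$, the cells $\vareps(Y+k)$ with $k\in K_\vareps$ exactly tile $\Omega$ up to a null set. So there is no boundary layer and no remainder term.

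For (i), fix $t$ and a cell index $k$. For $x\in\vareps(Y+k)$ we have $[x/\vareps]=k$, so $\teps\weps(t,x,y)=\weps(t,\vareps k+\vareps y)$ does not depend on $x$ within the cell. Hence
\begin{align*}
\int_{\vareps(Y+k)}\int_Y \teps\weps\,dy\,dx = \vareps^3\int_Y \weps(t,\vareps k+\vareps y)\,dy = \int_{\vareps(Y+k)}\weps(t,z)\,dz,
\end{align*}
using the substitution $z=\vareps(k+y)$ with $dz=\vareps^3dy$. Summing over $k\in K_\vareps$ and integrating in time gives the integral identity. Measurability of $\teps\weps$ is clear, since on each cell it is a composition with an affine map. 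The norm identity follows by applying the integral identity to $|\weps|^p\in L^1$ and noting that $\teps|\weps|^p=|\teps\weps|^p$, because $\teps$ commutes with pointwise operations.

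For (ii), on each cell $\teps\weps(t,x,\cdot)$ is $y\mapsto\weps(t,\vareps k+\vareps y)$, the composition of a $W^{1,p}$ function with an affine dilation. The chain rule for Sobolev functions under affine maps then gives $\nabla_y\teps\weps=\vareps(\nabla\weps)(t,\vareps k+\vareps y)=\vareps\teps\nabla\weps$. Applying (i) to $\nabla\weps$ shows $\vareps\teps\nabla\weps\in L^p((0,T)\times\Omega\times Y)$. The only mild point is that the resulting object lies in the Bochner space $L^p((0,T)\times\Omega,W^{1,p}(Y))$, and not merely fibrewise in $W^{1,p}(Y)$. I would argue this by density: for smooth $\weps$ it is immediate, and the linear isometry from (i), applied to $\weps$ and to $\nabla \weps$, extends it to all of $L^p((0,T),W^{1,p}(\Omega))$.

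For $\oeps$ and $Y_f$, the same cellwise argument applies. Here $\oeps\cap\vareps(Y+k)=\vareps(Y_f+k)$ for every $k\in K_\vareps$, up to the null set $\geps$, so the change of variables maps $\vareps(Y_f+k)$ onto $Y_f$.

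No step is a real obstacle. The only place requiring care is confirming that the integer-corner assumption on $\Omega$ and $\vareps^{-1}\in\N$ leave no partial cells, which is exactly what makes (i) an equality rather than an inequality.
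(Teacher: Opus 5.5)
Your proposal is correct. The paper gives no proof of its own here; it defers to \cite{CioranescuGrisoDamlamian2018}, and your cellwise change of variables $z=\vareps(k+y)$, together with the observation that $a,b\in\Z^3$ and $\vareps^{-1}\in\N$ make the cells $\vareps(Y+k)$, $k\in K_\vareps$, tile $\Omega$ exactly (so (i) holds without the usual boundary-layer remainder), is precisely the standard argument behind that citation.
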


The following results gives the crucial relation between the two-scale convergence and the unfolding operator, see \cite{BourgeatLuckhausMikelic} and \cite{CioranescuGrisoDamlamian2018} for arbitrary $p \in (1,\infty)$ 
\begin{lemma}
Let $p\in (1,\infty)$. A sequence $\weps \in L^p((0,T)\times \Omega)$ converges weakly (strongly) in the two-scale sense to a limit function $w_0 \in L^p((0,T)\times \Omega \times Y)$ if and only if the associated unfolded sequence $\teps \weps$ converges weakly (strongly) to $w_0$ in $L^p((0,T)\times \Omega \times Y)$.

\end{lemma}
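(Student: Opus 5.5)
The statement is the equivalence between two-scale convergence of $\weps$ and weak (strong) $L^p$ convergence of the unfolded sequence $\teps\weps$. I would prove the weak case by duality, using the integration identity of the preceding lemma, and then reduce the strong case to an equality of norms.

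\emph{Key identity.} Fix an admissible test function $\phi\in L^{p'}((0,T)\times\Omega, C^0_{\per}(Y))$. By $Y$-periodicity,
\[
\teps\bigl[\phi(t,\cdot,\tfrac{\cdot}{\vareps})\bigr](t,x,y)=\phi\bigl(t,\vareps[\tfrac{x}{\vareps}]+\vareps y,\,y\bigr).
\]
The preceding lemma (part (i)) then gives
\[
\int_0^T\!\!\int_\Omega \weps\,\phi\bigl(t,x,\tfrac{x}{\vareps}\bigr)\,dx\,dt=\int_0^T\!\!\int_\Omega\!\int_Y \teps\weps\;\teps\phi\,dy\,dx\,dt .
\]
Here $\teps\phi(t,x,y):=\phi(t,\vareps[\frac{x}{\vareps}]+\vareps y,y)$. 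Since $\Omega$ is a union of $\vareps$-cells, no boundary layer appears.

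\emph{Strong convergence of the unfolded test function.} I would show $\teps\phi\to\phi$ strongly in $L^{p'}((0,T)\times\Omega\times Y)$. For $\phi\in C^0([0,T]\times\overline\Omega\times\overline Y)$, periodic in $y$, this follows from uniform continuity, because $|\vareps[\frac{x}{\vareps}]+\vareps y-x|\le\sqrt3\,\vareps$. For general $\phi$ I would approximate in $L^{p'}(\Omega,C^0_{\per}(Y))$ and use
\[
\|\teps\phi\|_{L^{p'}}\le\|\phi\|_{L^{p'}((0,T)\times\Omega,C^0(Y))}.
\]
This bound holds because $\sup_y|\teps\phi|$ is the unfolding of the function $x\mapsto\sup_y|\phi(t,x,y)|$, averaged over the cell. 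This is the step requiring the most care.

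\emph{Weak case.} Suppose $\teps\weps\rightharpoonup w_0$ in $L^p$. Weak-times-strong convergence in the identity gives $\weps\ratsw{p}w_0$. Conversely, suppose $\weps\ratsw{p}w_0$. Two-scale convergent sequences are bounded by the Banach–Steinhaus theorem, so $\|\teps\weps\|_{L^p}=\|\weps\|_{L^p}\le C$. Every subsequence of $\teps\weps$ then has a weakly convergent subsequence with some limit $v$. Testing with smooth $\phi$ and using the identity shows $\int v\phi=\int w_0\phi$ for a dense class of $\phi$. Hence $v=w_0$, and the whole sequence converges weakly.

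\emph{Strong case.} The unfolding operator is an isometry, so $\|\teps\weps\|_{L^p(\Omega\times Y)}=\|\weps\|_{L^p(\Omega)}$. Strong two-scale convergence is weak two-scale convergence plus convergence of norms. By the weak case, this is the same as weak $L^p$ convergence of $\teps\weps$ together with convergence of its norms. Since $p\in(1,\infty)$, $L^p$ is uniformly convex, and the Radon–Riesz property turns weak convergence plus norm convergence into strong convergence. Strong convergence trivially gives both, which completes the equivalence.

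The same argument works on $\oeps$ and $Y_f$ by zero extension.
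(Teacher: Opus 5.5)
Your argument is correct and is essentially the standard proof in \cite{BourgeatLuckhausMikelic,CioranescuGrisoDamlamian2018}; the paper gives no proof of its own and simply refers to these works. That proof uses the unfolding identity for oscillating test functions, strong $L^{p'}$ convergence of the unfolded test function, boundedness, reflexivity and uniqueness of limits for the converse, and the isometry of $\mathcal{T}_\varepsilon$ together with the Radon--Riesz property of $L^p$ for the strong case. One justification should be restated: the bound $\|\mathcal{T}_\varepsilon\phi\|_{L^{p'}((0,T)\times\Omega\times Y)}\le\|\phi\|_{L^{p'}((0,T)\times\Omega,C^0(Y))}$ follows from the pointwise domination $|\mathcal{T}_\varepsilon\phi(t,x,y)|\le\mathcal{T}_\varepsilon\Phi(t,x,y)$, with $\Phi(t,z):=\sup_{y}|\phi(t,z,y)|$, combined with the isometry of $\mathcal{T}_\varepsilon$, and not from a statement about $\sup_y|\mathcal{T}_\varepsilon\phi|$, which in general is not controlled by $\|\Phi\|_{L^{p'}}$.
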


Finally, in a slight abuse of notation compared to the standard unfolding literature, we define the mean of a function $\phi \in L^1(Y_f)$ by
\begin{align}\label{def:Myf}
    \Myf(\phi) := \int_{Y_f} \phi dy.
\end{align}

\subsection{Weak two-scale compactness for the microscopic solutions}\label{sec:weak-two-scale}

First, we show some basic weak compactness results for the microscopic quantities $\ueps$, $\rhoeps$ and $p(\rhoeps)$, which follow directly from the uniform a priori bounds from Section \ref{sec:apriori_estimates}.  For the improved regularity of the limit function $p_0$ we use the improved pressure estimate from Proposition \ref{prop:improved_pressure_estimate_Bogovskii} and the  decomposition result in Corollary \ref{cor:decomposition_pressure}.

\begin{proposition}\label{prop:weak_compactness}
There exist $\rho_0 \in L^{\infty}((0,T),L^{\gamma}(\Omega \times Y_f)) \cap L^{\gamma + \nu}((0,T)\times \Omega \times Y_f)$ (with $\nu = \frac23 \gamma -1$ from Proposition \ref{prop:apriori_pressure_refined}), $\uvect_0 \in L^2((0,T)\times \Omega, H_{\per}^1(Y))^3$ with $\uvect_0 = 0$ in $Y_s$, and $p_0 \in L^{\frac{\gamma + \nu}{\gamma}}((0,T)\times \Omega \times Y_f)$ such that up to a subsequence (the function $\ueps$ is extended by zero to the whole domain $\Omega$), we have 
\begin{align*}
\ueps &\ratsw{2} \uvect_0,
\\
\vareps \nabla \ueps &\ratsw{2} \nabla_y \uvect_0,
\\
\chi_{\oeps}\rhoeps &\ratsw{\gamma + \nu} \chi_{Y_f} \rho_0,
\\
\chi_{\oeps} p(\rhoeps) &\ratsw{\frac{\gamma + \nu}{\gamma}} \chi_{Y_f} p_0 .
\end{align*}
Further, we have
\begin{align}\label{weak-two-scale-q_eps}
\chi_{\oeps} q_\vareps   \ratsw{\frac{\gamma + \nu}{\gamma}} \chi_{Y_f} \left( p_0  - \frac{1}{|\Omega|} \int_{\Omega} p_0 dx \right)
\end{align}
and $p_0$ is independent of $y$  and fulfills
$p_0 \in L^2((0,T),H^1(\Omega)) + L^{\infty}((0,T),W^{1,\gamma}(\Omega))$.
\end{proposition}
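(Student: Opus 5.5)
The weak convergences follow from the uniform bounds by Lemma \ref{lem:two_scale_compactness_basic}; the real work is the identification and regularity of $p_0$, which comes from the pressure decomposition.

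\emph{Basic convergences.} For $\ueps$ (extended by zero), Proposition \ref{prop:apriori_basic} gives $\|\ueps\|_{L^2} + \vareps\|\nabla\ueps\|_{L^2}\le C$. Since $\ueps$ vanishes on $\geps$, the extension lies in $L^2((0,T),H^1_0(\Omega))$. Lemma \ref{lem:two_scale_compactness_basic}(ii) then yields $\uvect_0 \in L^2((0,T)\times\Omega,H^1_{\per}(Y))^3$ with $\ueps \ratsw{2}\uvect_0$ and $\vareps\nabla\ueps\ratsw{2}\nabla_y\uvect_0$. To see that $\uvect_0=0$ in $Y_s$, test with $\phi$ supported in $Y_s$: since $\chi_{\oeps}\ueps = \ueps$, unfolding gives $\teps\ueps = 0$ on $\Omega\times Y_s$.

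For $\chi_{\oeps}\rhoeps$ and $\chi_{\oeps}p(\rhoeps)$, Proposition \ref{prop:apriori_pressure_refined} bounds them in $L^{\gamma+\nu}$ and $L^{(\gamma+\nu)/\gamma}$, so Lemma \ref{lem:two_scale_compactness_basic}(i) gives two-scale limits. These limits vanish on $Y_s$, because the unfolded functions are zero there; I write them as $\chi_{Y_f}\rho_0$ and $\chi_{Y_f}p_0$. The additional bound $\rho_0\in L^\infty((0,T),L^\gamma)$ follows from weak lower semicontinuity applied to the unfolded sequences, bounded in $L^\infty(L^\gamma)$. More precisely, I test against $\psi(t)\phi(x,y)$ and use the duality characterization of the $L^\infty(L^\gamma)$ norm.

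\emph{The pressure $\qeps$.} Take the decomposition $\qeps=\sum_i\qeps^i$ from Corollary \ref{cor:decomposition_pressure}. The terms $\qeps^3$ and $\qeps^4$ are $O(\vareps)$ in $L^2$ and $L^\infty(L^\gamma)$ respectively, so they two-scale converge to $0$.

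For $\qeps^1$ and $\qeps^2$, which are bounded in $L^2(H^1(\oeps))$ and $L^\infty(W^{1,\gamma}(\oeps))$, I need compactness results on the perforated domain. I will use an extension operator $E_\vareps$ for connected perforated domains (\cite{CioranescuGrisoDamlamian2018}, Acerbi et al.), which is uniformly bounded on $W^{1,p}$ and preserves the gradient bound. This gives limits $q^1\in L^2(H^1(\Omega))$ and $q^2\in L^\infty(W^{1,\gamma}(\Omega))$, both independent of $y$, with $\chi_{\oeps}\qeps^i \ratsw{} \chi_{Y_f}q^i$. The standard two-scale result for sequences bounded in $W^{1,p}$ without the factor $\vareps$ gives $y$-independence.

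A technical point is that an $L^\infty$-in-time bound only gives weak-$*$ convergence, so I work in $L^r$ with $r<\infty$. If the geometry does not guarantee extension operators, I would instead use the unfolded gradient: $\nabla_y\teps\qeps^i=\vareps\teps\nabla\qeps^i\to0$, so the limit of $\teps\qeps^i$ is $y$-independent on the connected set $Y_f$. Its $x$-regularity would then follow from difference quotients, which is more delicate. I expect this step to be the main obstacle.

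\emph{Identifying $p_0$.} Take $\psieps(t,x) = \psi(t,x) - \frac{1}{|\oeps|}\int_{\oeps}\psi\,dx$ with $\psi$ smooth and independent of $y$. By Corollary \ref{cor:Bogovskii}, the Bogovskii terms on the right-hand side of \eqref{ineq:improved_pressure_Bogovskii} are bounded by norms of $\psieps$ and $\partial_t\psieps$, multiplied by positive powers of $\vareps$ (here $\lambda>1+3/\gamma$ and Remark \ref{rem:improved_pressure_estimate}(ii) are used). Hence $\int (p(\rhoeps)-\qeps)\psieps\to0$.

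Pass to the limit using $|\oeps|\to|Y_f||\Omega|$, and then vary $\psi$ among functions of $(t,x)$. This gives $\int_{Y_f}p_0\,dy - \frac{1}{|\Omega|}\int_\Omega\int_{Y_f}p_0 = |Y_f|(q^1+q^2) + c(t)$.

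To get $y$-independence of $p_0$ itself, choose $\psi(t,x,x/\vareps)$ with $\int_{Y_f}\psi\,dy=0$. The $H^1(\oeps)'$-type norms of such functions are $O(\vareps)$, so Corollary \ref{cor:Bogovskii} again makes the right-hand side tend to zero. The $\qeps$ contribution also vanishes, because its limit is $y$-independent while the test function has zero $y$-mean. Therefore $\int p_0\psi=0$ for all such $\psi$, so $p_0$ is independent of $y$.

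Consequently $p_0 - \frac{1}{|\Omega|}\int_\Omega p_0$ equals $q^1+q^2$ up to normalizing the constant in the $\qeps$ decomposition, which is possible since $\psieps$ has mean zero. This proves \eqref{weak-two-scale-q_eps} and shows $p_0\in L^2(H^1)+L^\infty(W^{1,\gamma})$; the constant $\frac{1}{|\Omega|}\int p_0\in L^{(\gamma+\nu)/\gamma}(0,T)$ can be absorbed.
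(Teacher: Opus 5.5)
Your proof is correct and follows the paper's route. You use the decomposition of $\qeps$ from Corollary~\ref{cor:decomposition_pressure}, extend $\qeps^1,\qeps^2$ to $y$-independent limits, and test \eqref{ineq:improved_pressure_Bogovskii} with mean-corrected oscillating functions. Their Bogovskii norms are $O(1)$ by Corollary~\ref{cor:Bogovskii}, so the prefactors $\vareps^{\lambda/2}$ and $\vareps^{\frac{\lambda}{s}-\frac{2\theta}{s}-1}$ alone make the right-hand side vanish. This is why the paper treats a general $\psi(t,x,\fxe)$ with zero mean over $\Omega\times Y_f$ in one step, and why your $O(\vareps)$ bound on dual norms is not needed.

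One small correction concerns the time-dependent constant $\frac{1}{|\Omega|}\int_\Omega p_0\,dx$. It can be absorbed into $L^{\infty}((0,T),W^{1,\gamma}(\Omega))$ because it lies in $L^\infty(0,T)$, which follows from $\sup_t\int_{\oeps}p(\rhoeps)\,dx\le C$ by Proposition~\ref{prop:apriori_basic}. Membership in $L^{\frac{\gamma+\nu}{\gamma}}(0,T)$, as you state, would not suffice, since $\frac{\gamma+\nu}{\gamma}<2$.
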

\begin{proof}
The convergence results for $\ueps$, $\vareps \nabla \ueps$, $\rhoeps$ and $p(\rhoeps)$ with $p_0 \in L^{\frac{\gamma + \nu}{\gamma}}((0,T)\times \Omega \times Y_f)$ are
a direct consequence of the a priori estimates in Proposition \ref{prop:apriori_basic} and \ref{prop:apriori_pressure_refined} together with the two-scale compactness results in Lemma \ref{lem:two_scale_compactness_basic}.
For the convergence of $\qeps$  we first use  the decomposition of $\qeps $ from Corollary \ref{cor:decomposition_pressure}, we immediately obtain
\begin{align*}
    \chi_{\oeps} \qeps^3 &\rightarrow 0 &\mbox{ in }& L^2((0,T)\times \Omega),
    \\
    \chi_{\oeps}\qeps^4 &\rightarrow 0 &\mbox{ in }& L^{\infty}((0,T),L^{\gamma}(\Omega)).
\end{align*}
Let us have a closer look at the critical term $\qeps^1$ and $\qeps^2$. Using well-known extension operators (see e.g., \cite{Acerbi1992}) preserving the uniform bounds of $\qeps^1$ and $\qeps^2$, we obtain bounded  functions $\tqeps^1 \in L^2((0,T),H^1(\Omega))$ and $\tqeps^2 \in L^{\infty}((0,T),L^{\gamma}(\Omega))$, and limit functions $q_0^1\in L^2((0,T),H^1(\Omega))$ and $q_0^2 \in L^{\infty}((0,T),L^{\gamma}(\Omega))$, such that up to a subsequence
\begin{align*}
    \tqeps^1 \ratsw{2} q_0^1, \qquad \tqeps^2 \ratsw{s,\gamma} q_0^2
\end{align*}
for every $s \in [1,\infty)$. Now, we choose $\psi \in C_0^{\infty}((0,T)\times \Omega \times Y_f)$ with mean-value zero with respect to $\Omega \times Y_f$ and define $\psieps (t,x):= \psi\bftx $  and 
\begin{align*}
\widetilde{\psieps}:= \psieps - \frac{1}{|\oeps|} \int_{\oeps} \psieps dx.
\end{align*}
We emphasize that we have (for every $t \in (0,T)$)
\begin{align*}
    \frac{1}{|\oeps|} \int_{\oeps} \psieps dx  \rightarrow  \frac{1}{|\Omega| | Y_f|} \int_{\Omega} \int_{Y_f} \psi dy dx = 0.
\end{align*}
Hence, from the compactness results of $p(\rhoeps)$ and $\qeps$ (the decomposition), we obtain
\begin{align*}
    \lim_{\vareps \to 0} \int_0^T \int_{\oeps} (p(\rhoeps) - \qeps) \widetilde{\psieps} dx dt  =\lim_{\vareps \to 0} \int_0^T \int_{\oeps} (p(\rhoeps) - \qeps) \psieps dx dt = \int_0^T \int_{\Omega} \int_{Y_f} (p_0 - q_0^1 - q_0^2) \psi dy dx dt.
\end{align*}
Further, using the properties of the Bogovskii-operator from Corollary \ref{cor:Bogovskii}, we obtain (for arbitrary $s,q \in [1,\infty)$)
\begin{align*}
 \|\Beps(\widetilde{\psieps})\| _{W^{1,s}((0,T),L^q(\oeps))} + \vareps \|\nabla \Beps(\widetilde{\psieps})\|_{L^{s}((0,T),L^q(\oeps))} \le C,
\end{align*}
with a constant $C>0$ independent of $\vareps$ (but of course depending on $\psi$). Hence, we obtain from inequality $\eqref{ineq:improved_pressure_Bogovskii}$ in Proposition \ref{prop:improved_pressure_estimate_Bogovskii} with a constant $\varkappa >0$ (see also Remark \ref{rem:improved_pressure_estimate})
\begin{align}\label{limit-p(rho)-q}
\left| \int_0^T \int_{\oeps} (p(\rhoeps) - \qeps) \widetilde{\psieps} dx dt \right| \le C\vareps^{\varkappa} \rightarrow 0
\end{align}
for $\vareps \to 0$, which implies $p_0 = q_0^1 + q_0^2$ up to a constant and, in particular, $p_0$ is independent of $y$.
\end{proof}

\subsection{Strong two-scale convergence}\label{sec:strong two-scale}
In the previous section we derived several weak two-scale limits and checked various properties of the limit functions. Of course, we expect $p_0 = a \rho_0^{\gamma}$ (hence $\rho_0$ has to be independent of $y$), but this result is not covered by the weak compactness results. Further, at the moment we are not able to pass to the limit in the weak formulations  $\eqref{eq:weak_mass_balance}$ and  $\eqref{eq:weak_impulse}$, since we need strong two-scale convergence of $\rhoeps$ to pass to the limit in the terms including products of the form $\rhoeps \ueps$. The main result of this section is the following Proposition which gives the strong two-scale compactness for the density $\rhoeps$ and consequently for  the pressure $p(\rhoeps)$.

\begin{proposition}\label{prop:strong_compactness_rhoeps} 
The limit density $\rho_0$ is independent of the microscopic variable $y$, i.e., we have $\rho_0(t,x,y) = \rho_0(t,x)$, and up to a subsequence it holds for $s \in [1 , \gamma + \nu)$, with $\nu = \frac23 \gamma -1$ for $\gamma \le 3$,
\begin{align*}
\teps \rhoeps \rightarrow \rho_0 \quad\mbox{strongly in } L^s((0,T)\times \Omega \times Y_f).
\end{align*}
In other words $\chi_{\oeps}\rhoeps \ratss{s}  \chi_{Y_f}\rho_0$. In particular, we obtain
\begin{align*}
\teps p(\rhoeps) \rightarrow p_0 = a \rho_0^{\gamma} \quad\mbox{strongly in } L^q((0,T)\times \Omega \times Y_f)
\end{align*}
for some $q >1$. In other words, $\chi_{\oeps} p(\rhoeps) \ratss{q} \chi_{Y_f} p_0 = a \rho_0^{\gamma}$. Moreover, we have $p_0 \in L^2((0,T),H^1(\Omega)) + L^{\infty}((0,T),W^{1,\gamma}(\Omega))$.

\end{proposition}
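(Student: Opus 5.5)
We follow the Feireisl--Lions strategy for the effective viscous flux, applied to the unfolded density and using the improved pressure estimate in Proposition \ref{prop:improved_pressure_estimate_Bogovskii} instead of the usual effective viscous flux identity.

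\textbf{Step 1: weak limits of convex functions.} Up to a subsequence we have, for a convex renormalization such as $b(z)=z\log z$ (or $T_k$-truncations as in \cite{feireisl2001compactness}),
\begin{align*}
\teps \rhoeps \rightharpoonup \rho_0, \qquad \teps \big(\rhoeps\log\rhoeps\big) \rightharpoonup \overline{\rho_0\log\rho_0}, \qquad \teps p(\rhoeps) \rightharpoonup p_0, \qquad \teps\big(p(\rhoeps)\rhoeps\big)\rightharpoonup \overline{p(\rho)\rho},
\end{align*}
weakly in the appropriate $L^r((0,T)\times\Omega\times Y_f)$. For the last product we use the integrability from Proposition \ref{prop:apriori_pressure_refined} and $\nu\le \gamma$ exponents, truncating if necessary. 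The goal is to show that the oscillation defect $\overline{\rho\log\rho}-\rho_0\log\rho_0$ vanishes. Strict convexity then gives strong convergence of $\teps\rhoeps$ in $L^1$, hence in $L^s$ for $s<\gamma+\nu$ by interpolation with the uniform $L^{\gamma+\nu}$ bound. Strong convergence of the pressure in $L^q$ for some $q>1$ follows in the same way, and $p_0=a\rho_0^\gamma$ together with the $y$-independence and regularity of $p_0$ from Proposition \ref{prop:weak_compactness} yields that $\rho_0$ is independent of $y$.

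\textbf{Step 2: key inequality $\overline{p(\rho)\rho}\le p_0\,\rho_0$ (in integrated form).} We test the improved pressure estimate \eqref{ineq:improved_pressure_Bogovskii} with $\psieps=\phi(t)\big(\rhoeps-\frac{1}{|\oeps|}\int_{\oeps}\rhoeps\big)$, regularized in time by mollification. By Corollary \ref{cor:Bogovskii} and $\partial_t\rhoeps=-\div(\rhoeps\ueps)$ with $\ueps\cdot\nu=0$, the term $\Beps(\partial_t\psieps)$ is of the form $\Beps(\div g_\vareps)$, which is bounded by $\|\rhoeps\ueps\|$ without loss of $\vareps^{-1}$. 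All right-hand side terms therefore carry a positive power $\vareps^{\varkappa}$; this is where $\lambda>1+3/\gamma$, the choice of $s,p$ close to $1,\frac{2\gamma}{\gamma-1}$, and $\gamma>9/5$ (which gives $\gamma+\nu\ge 2\gamma\cdot$ the needed dual exponents) enter. We obtain
\[
\int_0^T\!\phi\int_{\oeps}p(\rhoeps)\rhoeps \;=\;\int_0^T\!\phi\int_{\oeps}\qeps\rhoeps + \text{(mean terms)} + o(1).
\]
In the $\qeps\rhoeps$ term, $\qeps^3,\qeps^4\to0$ strongly. The parts $\qeps^1,\qeps^2$ are bounded in spaces with gradient, so their extensions converge strongly in $L^r_{loc}$ in $x$ after compactness in time. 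Time compactness comes from the equation for $\qeps$, or more simply by working with unfolding: $\teps\qeps^{1,2}-\tqeps^{1,2}\to0$ in the $y$-direction since $\nabla_y\teps\qeps^i=\vareps\teps\nabla\qeps^i$. Because $\rhoeps$ has weak time compactness from the continuity equation, the Aubin--Lions type argument gives
\[
\int\phi\,\overline{p(\rho)\rho}=\int\phi\,p_0\,\rho_0,
\]
with the mean terms identified in the same way. I expect this identification, in particular the time compactness needed to pass to the limit in $\qeps\rhoeps$, to be the main obstacle.

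\textbf{Step 3: the transport argument.} From the renormalized equation \eqref{eq:renormalized_continuity_equation} with $b(z)=z\log z$ and zero extension, integrated over $\Omega$, we get
\[
\frac{d}{dt}\int\rhoeps\log\rhoeps=-\int\rhoeps\div\ueps .
\]
Since $\ueps$ is not compact, we instead combine the weak limit as follows. Testing the momentum equation with $\Beps$-type functions shows $\vareps^{-1}$-scaled control, namely $\rhoeps\div\ueps$ and the pressure–density correlation. More directly, in the Darcy regime the limit of $\int\rhoeps\div\ueps$ is expressible via Step 2. Monotonicity of $p$ gives $\overline{p(\rho)\rho}\ge p_0\rho_0$ pointwise, with equality forcing $\overline{p(\rho)\rho}=\overline{p(\rho)}\rho_0$. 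Combined with Step 2 this yields equality. The standard Minty/monotonicity trick, $\int(\rhoeps^\gamma-\rho_0^\gamma)(\rhoeps-\rho_0)\to0$ in unfolded variables, then gives strong convergence of $\teps\rhoeps$ directly, without needing the log-defect. This is the route I would try first: prove Step 2 on $\Omega\times Y_f$ and conclude by strict monotonicity of $z\mapsto z^\gamma$.
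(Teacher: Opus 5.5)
\textbf{Verdict: there is a genuine gap.} Your skeleton is the paper's: replace $p(\rhoeps)$ by the macroscopically compact $\qeps$ via \eqref{ineq:improved_pressure_Bogovskii}, pair $\qeps$ with the density using time compactness from the continuity equation, and conclude by monotonicity of $z\mapsto z^\gamma$ in the unfolded variables. But you run it with the \emph{untruncated} density, and that breaks down precisely in the range $\gamma\le 3$ covered by the statement.

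\textbf{Where Steps 1 and 2 fail.} For $\gamma\le 3$ you only control $\rhoeps$ in $L^{\gamma+\nu}$ with $\gamma+\nu=\frac53\gamma-1\le\gamma+1$. So for $\gamma<3$ no uniform $L^1$ bound on $p(\rhoeps)\rhoeps=a\rhoeps^{\gamma+1}$ is available, and at $\gamma=3$ there is no equi-integrability. Consequently the weak limit $\overline{p(\rho)\rho}$ in Step 1 need not exist, and neither need the limit of $\int(\rhoeps^\gamma-\rho_0^\gamma)(\rhoeps-\rho_0)$ in your Minty argument.

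Likewise, $\psieps=\phi\,(\rhoeps-\frac{1}{|\oeps|}\int_{\oeps}\rhoeps)$ is not admissible in \eqref{ineq:improved_pressure_Bogovskii}, for two reasons.
First, since $\div\Beps\psieps=\psieps$, the second term on the right requires $\rhoeps$ itself in $L^{s'}$ in time and $L^{\frac{s\gamma p}{(p-2)\gamma-p}}$ in space. Both exponents become arbitrarily large in the regime $s\downarrow 1$, $p\downarrow\frac{2\gamma}{\gamma-1}$, which is forced when $\lambda$ is close to $1+\frac3\gamma$.
Second, your divergence-form treatment of the first term needs $\rhoeps\ueps\in L^1((0,T),L^{\frac{2\gamma}{\gamma-1}}(\oeps))$. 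For $\gamma=2$ this is $L^1(L^4)$, whereas $L^\infty(L^2)\times L^2(L^6)$ only gives $L^2(L^{3/2})$.
Compare Proposition \ref{prop:apriori_pressure_refined}, where $\rhoeps^{\nu}$ is an admissible test function only for $\nu\le\frac23\gamma-1<1$. So the assertion in Step 2 that every term on the right carries a factor $\vareps^{\varkappa}$ cannot be justified, and the divergence structure of $\partial_t\rhoeps$ does not rescue it.

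\textbf{The missing ingredient: truncation.} The paper's proof is built around the cut-off $T_k$.
It uses the elementary inequality $a|T_k(z)-T_k(w)|^{\gamma+1}\le(p(z)-p(w))(T_k(z)-T_k(w))$ with $z=\teps\rhoeps$ and $w=\rho_0$.
The cross terms have a nonpositive limit by convexity of $p$ and concavity of $T_k$.
The estimate \eqref{ineq:improved_pressure_Bogovskii} is applied to the time-mollified $\widetilde{[T_k(\rhoeps)]_\delta}$. For this function every norm on the right is bounded by $C_k$, so the error is $C_k\vareps^{\varkappa}$ for fixed $k$.

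The price is that $\partial_t T_k(\rhoeps)$ is no longer a divergence. The renormalized equation \eqref{eq:renormalized_continuity_equation} produces the term $(\rhoeps T_k'(\rhoeps)-T_k(\rhoeps))\div\ueps$. This costs $\vareps^{-1}$ through $\|\div\ueps\|_{L^2}$, plus a further loss from the last estimate of Corollary \ref{cor:Bogovskii}. Check the resulting exponent carefully:
with the factor $\vareps^{\lambda/2}$ of the first term in \eqref{ineq:improved_pressure_Bogovskii} you get $\vareps^{\frac\lambda2-\frac3{2\gamma}-1}$, which vanishes only if $\lambda>2+\frac3\gamma$;
estimating this contribution instead with $\|\ueps\|_{L^2(L^6)}\lesssim\vareps^{-1}$, as in Lemma \ref{lem:refined_pressure_estimate}, gives $\vareps^{\lambda-1-\frac3\gamma}$, which vanishes under the standing assumption $\lambda>1+\frac3\gamma$.

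\textbf{The pairing with $\qeps$.} The term $\int\qeps T_k(\rhoeps)$ is handled as you anticipated, except that there is no equation for $\qeps$ supplying time compactness. Instead one proceeds as follows.
Replace $T_k(\rhoeps)$ by $\rhoeps$, up to an error $h(k)\to0$ that is uniform in $\vareps$ by the $L^{\gamma+\nu}$ bound.
Use the untruncated continuity equation together with $\gamma+\nu>2$ to get $\chi_{\oeps}\rhoeps\to\Myf(\rho_0)$ strongly in $L^s((0,T),W^{1,\gamma}(\Omega)')$.
Pair this with the $L^\infty(W^{1,\gamma})$, resp.\ $L^2(H^1)$, bounds on $\qeps$.
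Letting $\vareps\to0$ and then $k\to\infty$ yields $\teps\rhoeps\to\rho_0$ in $L^1$.

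Your Step 3 (the $\rho\log\rho$ transport argument) is unnecessary here, because the pressure is already controlled through $\qeps$. As written, it is not an argument.
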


\begin{proof}
The main idea is related to the control of the so-called oscillation defect measure (see \cite{feireisl2017singular}, and \cite{feireisl2010homogenization} in the context of homogenization), of the unfolded sequence $\teps \rhoeps$. However, here we will not use the precise definition, but many aspects which are related to the control of the oscillation defect measure. Several ideas in the proof might be well-known in the theory for compressible fluids. However, for the sake of completeness, we also provide the  details.
We use the following notation. For a sequence $\veps$ in $L^1((0,T)\times \Omega)$ (here, if not stated otherwise, all functions are extended by zero from $\oeps$ to the whole domain $\Omega$) we denote its weak two-scale limit in $L^1$ (if it exists) by $\bar{v}$. More precisely, we have $\teps \veps \rightharpoonup \bar{v}$ in $L^1((0,T)\times \Omega \times Y)$.

We define for $k\in \N$ the cut-off function (see also \cite{feireisl2004dynamics}) 
\begin{align*}
T_k(z) := k T\left(\frac{z}{k}\right) \qquad  z \in \R
\end{align*}
with $T \in C^{\infty}(\R)$ such that
\begin{align*}
T(z):= \begin{cases}
z &\mbox{ for } z \in [0,1],
\\
2 &\mbox{ for } z \geq 3,
\\
-T(-z) &\mbox{ for } z \in (-\infty,0],
\end{cases}
\end{align*}
and $T$ is concave on $[0,\infty)$. We consider the following quantity, which is strongly connected with the 
so-called oscillation defect measure (see e.g. \cite{feireisl2010homogenization})
\begin{align*}
 \int_0^T \int_{\Omega} \int_{Y_f} \left| T_k \left(\teps \rhoeps \right) - T_k(\rho_0) \right|^{\gamma + 1} dx dt.
\end{align*}
First of all, we show that (see also \cite[Section 6.4]{feireisl2004dynamics})
\begin{align}\label{ineq:aux1_strong_conv_density}
a \left| T_k \left(\teps \rhoeps\right) - T_k(\rho_0)\right|^{\gamma + 1} \le p\left(\teps  \rhoeps - \rho_0\right) \left(T_k(\teps \rhoeps )- T_k(\rho_0)\right).
\end{align}
For $p$ we have for all $y \geq z \geq 0$
\begin{align*}
p(y) - p(z) \geq p(y - z).
\end{align*}
Further, from $|T_k(y) - T_k(z)| \le |y-z|$ we obtain
\begin{align*}
p\left(|T_k(y) - T_k(z)|\right) \le p(|y-z|).
\end{align*}
Hence, we obtain
\begin{align*}
a \left| T_k \left(\teps \rhoeps\right) - T_k(\rho_0)\right|^{\gamma + 1} &= p\left(|T_k(\teps \rhoeps) - T_k(\rho_0)|\right) |T_k(\teps \rhoeps - T_k(\rho_0)|
\\
&\le p\left(|\teps \rhoeps - \rho_0|\right) |T_k(\teps \rhoeps) - T_k(\rho_0)|
\\
&\le \left( p(\teps \rhoeps ) - p(\rho_0)\right) (T_k(\teps \rhoeps) - T_k(\rho_0)),
\end{align*}
which is $\eqref{ineq:aux1_strong_conv_density}$.
Hence, we obtain  for every $\psi \in C_0^{\infty}(I') $ with a fixed interval $I'$ such that $\overline{I'} \subset (0,T)$ and $\psi\geq 0$ (later we choose a sequence converging to $1$ in $L^1(0,T)$) 
\begin{align}
\begin{aligned}\label{ineq:aux_estimate_osc_defect}
 a  \int_0^T & \int_{\Omega} \int_{Y_f} \psi \left| T_k \left(\teps \rhoeps\right) - T_k(\rho_0)\right|^{\gamma + 1}  dy dx dt
\\
\le&  \int_0^T \int_{\Omega} \int_{Y_f} \psi \left( p\left(\teps  \rhoeps\right) -  p\left(\rho_0\right) \right) \left(T_k(\teps \rhoeps )- T_k(\rho_0)\right) dy dx dt
\\
=&\int_0^T \psi \int_{\Omega} \int_{Y_f} p(\teps \rhoeps) T_k(\teps \rhoeps) - \overline{p(\rho)} \,\overline{T_k(\rho)} dy dx dt
\\
&- \int_0^T \psi \int_{\Omega} \int_{Y_f} p(\teps \rhoeps) T_k(\rho_0) + p(\rho_0) T_k(\teps \rhoeps) - p(\rho_0) T_k(\rho_0) - \overline{p(\rho)} \,\overline{T_k(\rho)}  dy dx dt
\\
=&: A_{\vareps,k}^1 + A_{\vareps,k}^2.
\end{aligned}
\end{align}
Since $p$ is convex and $T_k$ is concave, we obtain for the second term (see \cite[Theorem 2.11]{feireisl2004dynamics}) for every $k\in \N$
\begin{align}
    \lim_{\vareps \to 0 } A_{\vareps,k}^2 = - \int_0^T \psi \int_{\Omega} \int_{Y_f} \left(\overline{p(\rho)}  - p(\rho_0)\right) \left( T_k(\rho_0) - \overline{T_k(\rho)} \right) dy dx dt  \le 0.
\end{align}
We have to estimate the first term $A_{\vareps,k}^1$. In the following, for an arbitrary function $h \in L^1(U)$ with $U\subset \R^3$ open and bounded we define
\begin{align*}
    \tilde{h}:= h - \frac{1}{|U|} \int_U h dx.
\end{align*}
Now,  we obtain with the properties of the unfolding operator and the fact that $p_0 = \overline{p(\rho)}$  (see $\eqref{def:Myf}$ for the definition of $\Myf$)
\begin{align*}
A_{\vareps,k}^1 =& \int_0^T \int_{\oeps} \psi  p(\rhoeps) T_k(\rhoeps) dx dt - \int_0^T \int_{\Omega}\psi  p_0 \Myf\left(\overline{T_k(\rho)}\right) dx dt 
\\
=& \left\{\int_0^T \int_{\oeps} \psi  \widetilde{p(\rhoeps) } T_k(\rhoeps) dx dt - \int_0^T \int_{\Omega}\psi \widetilde{ p_0} \Myf\left(\overline{T_k(\rho)}\right) dx dt  \right\}
\\
&+ \left\{\frac{1}{|\oeps|} \int_0^T \psi \int_{\oeps} T_k(\rhoeps) dx \int_{\oeps} p(\rhoeps) dz dt  - \frac{1}{|\Omega|} \int_0^T \psi \int_{\Omega} p_0 dx \int_{\Omega} \Myf\left( \overline{T_k(\rho)}\right) dz dt  \right\}
\\
=&: A_{\vareps,k}^{1,1} + A_{\vareps,k}^{1,2} .
\end{align*}
It is easy to check, that for any $k$ we have $A_{\vareps,k}^{1,2} \rightarrow 0$ for $\vareps \to 0$. For the first term $A_{\vareps,1}^{1,1}$ we obtain with a time-regularization $\left[T_k(\rhoeps)\right]_{\delta}$, see \cite[Lemma 6.5]{novotny2004introduction} of $T_k(\rhoeps)$ and $q_{\vareps}$ from Proposition \ref{prop:improved_pressure_estimate_Bogovskii}
\begin{align}
\begin{aligned}\label{ineq:aux_strong_conv_density_Aeps_k_11}
A_{\vareps,k}^{1,1} = & \int_0^T \int_{\oeps} \psi  p(\rhoeps) \widetilde{[T_k(\rhoeps)]_{\delta}}  dx dt + \underbrace{\int_0^T \int_{\oeps}\psi  \widetilde{p(\rhoeps)} \left( T_k(\rhoeps) - \left[T_k(\rhoeps)\right]_{\delta}\right) dx dt }_{=: a_{\vareps,k}^{\delta}}  
\\
& \quad - \int_0^T \int_{\Omega} \psi  \widetilde{p_0} \Myf\left(\overline{T_k(\rho)}\right) dx dt 
\\
=& a_{\vareps,k}^{\delta} +  \int_0^T\int_{\oeps} q_{\vareps} \psi   \widetilde{\left[T_k(\rhoeps)\right]_{\delta}} dx dt - \int_0^T \int_{\Omega} \psi \widetilde{p_0} \Myf\left(\overline{T_k(\rho)}\right) dx dt \\
& \quad + \int_0^T \int_{\oeps} (p(\rhoeps) - q_{\vareps}) \psi \widetilde{[T_k(\rhoeps)]_{\delta}} dx dt .
\end{aligned}
\end{align}
We estimate the last term on the right-hand side  of \eqref{ineq:aux_strong_conv_density_Aeps_k_11} using Proposition \ref{prop:improved_pressure_estimate_Bogovskii}. For this, we need the time-derivative of the test-function $\widetilde{[T_k(\rhoeps)]_{\delta}}$, and therefore the regularization in time was necessary. Our aim is to show that this term can be controlled by some $\vareps^{\varkappa}$ for some  $\varkappa>0$, uniform with respect to $\delta$. For this  we use similar arguments as in \cite[Section 7.9.5]{novotny2004introduction}. We notice that 
\begin{align}\label{eq:chain_rule_time}
    \partial_t \left( \psi \left[T_k(\rhoeps)\right]_{\delta} \right) = \psi' \left[T_k(\rhoeps) \right]_{\delta} + \psi \partial_t \left[T_k(\rhoeps)\right]_{\delta}.
\end{align}
From the renormalized continuity equation $\eqref{eq:renormalized_continuity_equation}$ we obtain  the regularized equation  ($b=T_k$ is admissible in $\eqref{eq:renormalized_continuity_equation}$)
\begin{align}\label{eq:continuity_equation_T_k_rhoeps}
\partial_t \left[T_k(\rhoeps)\right]_{\delta} + \div \left[ T_k(\rhoeps) \ueps \right]_{\delta} + \left[ \left(\rhoeps T_k'(\rhoeps) - T_k(\rhoeps) \right)\div \ueps \right]_{\delta} = 0 \qquad \mbox{ in } \mathcal{D}'(I' \times \R^3).
\end{align}
In other words, we have for all $\phi \in C_0^{\infty}(I')$ and $\eta \in C_0^{\infty}(\R^3)$ that 
\begin{align*}
\int_0^T \phi \int_{\R^3} \left[T_k(\rhoeps) \ueps \right]_{\delta} \cdot \nabla \eta dx dt = \int_0^T \int_{\R^3} \left[\partial_t \left[T_k(\rhoeps)\right]_{\delta} +  \left[\left(\rhoeps T_k'(\rhoeps) - T_k(\rhoeps) \right)\div \ueps \right]_{\delta}   \right] \eta dx dt.
\end{align*}
In particular, we have $\div \left[T_k(\rhoeps) \ueps \right]_{\delta} \in C^{\infty}(I',L^2(\R^3))$.  Using integration by parts we also obtain $\left[T_k(\rhoeps)\ueps \right]_{\delta} \cdot \nu = 0$ on $\partial \oeps$ (this means in $H^{-\frac12}(\partial \oeps)$) and everywhere in $I'$.

Now, using the estimate in Proposition \ref{prop:improved_pressure_estimate_Bogovskii} we get
\begin{align}
\begin{aligned}\label{ineq:aux_error_pressure_strong_conv_density}
\int_0^T \int_{\oeps} (p(\rhoeps) - q_{\vareps}) \psi \widetilde{[T_k(\rhoeps)]_{\delta}} dx dt \le& C \bigg\{\vareps^{\frac{\lambda}{2}} \|\Beps(\partial_t (\psi \widetilde{[T_k(\rhoeps)]_{\delta}})\|_{L^1((0,T),L^{\frac{2\gamma}{\gamma -1}}(\oeps))}   
\\
&+  \vareps^{\frac{\lambda}{s} - \frac{2\theta}{s} - 1} \vareps \|\nabla \Beps (\psi \widetilde{[T_k(\rhoeps)]_{\delta}})\|_{L^{s'}((0,T),L^{\frac{s'\gamma p}{(p-2)\gamma - p}}(\oeps))}
    \\
    &+ \vareps^{\frac{\lambda}{2}} \|\Beps(\psi \widetilde{[T_k(\rhoeps)]_{\delta}})\|_{W^{1,1}((0,T),L^{\frac{2\gamma}{\gamma -1}}(\oeps))} \bigg\}.
\end{aligned}
\end{align}
Let us consider the norms including the Bogovskii operator for $q = \frac{2\gamma }{\gamma -1} \in [2,6]$ to obtain with $\tilde \theta = \frac{3q - 6}{2q} = \frac{3}{2\gamma}$ from Corollary \ref{cor:Bogovskii} 
\begin{align*}
\big\|\Beps(\partial_t(\psi &\widetilde{[T_k(\rhoeps)]_{\delta}})) \big\|_{L^1((0,T),L^q(\oeps))} \\ 
\le & \|\Beps(\psi' \widetilde{[T_k(\rhoeps)]_{\delta}}) \|_{L^1((0,T),L^q(\oeps))} + \|\Beps(\div (\psi [T_k(\rhoeps) \ueps]_{\delta}))\|_{L^1((0,T),L^q(\oeps))}
\\
&+ \|\Beps( \psi \widetilde{[(\rhoeps T_k'(\rhoeps) - T_k(\rhoeps)) \div \ueps]_{\delta})} \|_{L^1((0,T),L^q(\oeps))}
\\
\le& C \|\psi'\|_{L^1(0,T)} \|\widetilde{[T_k(\rhoeps)]_{\delta}}\|_{L^{\infty}(I',L^q(\oeps))}
+ C \|\psi\|_{L^{\infty}(0,T)}\| [T_k(\rhoeps) \ueps]_{\delta}\|_{L^1(I',L^q(\oeps))}
\\
&+ C \vareps^{-\frac{3}{2\gamma}} \|\psi\|_{L^{\infty}(0,T)} \Big\{ \| \widetilde{[(\rhoeps T_k'(\rhoeps) - T_k(\rhoeps)) \div \ueps]_{\delta})} \|_{L^1(I',L^{\frac65}(\oeps))}
\\
&+ \vareps \| \widetilde{[(\rhoeps T_k'(\rhoeps) - T_k(\rhoeps)) \div \ueps]_{\delta})}\|_{L^1(I',L^2(\oeps))}
\Big\}.
\\
\le&  C \|\psi'\|_{L^1(0,T)} \|\widetilde{[T_k(\rhoeps)]_{\delta}}\|_{L^{\infty}(I',L^q(\oeps))}
+ C \|\psi\|_{L^{\infty}(0,T)}\| [T_k(\rhoeps) \ueps]_{\delta}\|_{L^1(I',L^q(\oeps))}
\\
&+ C \vareps^{-\frac{3}{2\gamma}} \|\psi\|_{L^{\infty}(0,T)}\| \widetilde{[(\rhoeps T_k'(\rhoeps) - T_k(\rhoeps)) \div \ueps]_{\delta})}\|_{L^1(I',L^2(\oeps))} .
\end{align*}
Using again Corollary \ref{cor:Bogovskii}, we get
\begin{align*}
\| \Beps(\psi\widetilde{[T_k(\rhoeps)]_{\delta}})\|_{L^1((0,T),L^q(\oeps))} \le C \|\psi\|_{L^{\infty}(0,T)} \| \widetilde{[T_k(\rhoeps)]_{\delta}}\|_{L^1(I',L^q(\oeps))}
\end{align*}
and 
\begin{align*}
\vareps \|\nabla  \Beps(\psi\widetilde{[T_k(\rhoeps)]_{\delta}})\|_{L^{s'}((0,T),L^{\frac{s'\gamma p}{(p-2)\gamma - p}}(\oeps))} \le C \|\psi\|_{L^{\infty}(0,T)} \| \widetilde{[T_k(\rhoeps)]_{\delta}}\|_{L^{s'}(I',L^{\frac{s'\gamma p}{(p-2)\gamma - p}}(\oeps))} .
\end{align*}
Now, using the above estimates in  $\eqref{ineq:aux_error_pressure_strong_conv_density}$ we have 
\begin{align*}
&\int_0^T  \int_{\oeps} (p(\rhoeps) - q_{\vareps}) \psi \widetilde{[T_k(\rhoeps)]_{\delta}} dx dt 
\\
&\le C \bigg( \vareps^{\frac{\lambda}{2}}\|\psi'\|_{L^1(0,T)} \|\widetilde{[T_k(\rhoeps) ]_{\delta}}\|_{L^{\infty}(I',L^{\frac{2\gamma}{\gamma -1}}(\oeps))}
+ \vareps^{\frac{\lambda}{2}} \|\psi\|_{L^{\infty}(0,T)}\| [T_k(\rhoeps) \ueps]_{\delta}\|_{L^1(I',L^{\frac{2\gamma}{\gamma -1}}(\oeps))}
\\
& \ + \vareps^{\frac{\lambda}{2} - \frac{3}{2\gamma}} \|\psi\|_{L^{\infty}(0,T)} \| \widetilde{[(\rhoeps T_k'(\rhoeps) - T_k(\rhoeps))\div \ueps]_{\delta})} \|_{L^1(I',L^2(\oeps))} \\
& \ + \vareps^{\frac{\lambda}{s} - \frac{2\theta}{s} - 1} \|\psi\|_{L^{s'}((0,T)} \| \widetilde{[T_k(\rhoeps)]_{\delta}}\|_{L^{s'}(I',L^{\frac{s'\gamma p}{(p-2)\gamma -p}}(\oeps))}
+ \vareps^{\frac{\lambda}{2}} \|\psi\|_{L^{\infty}(0,T)} \| \widetilde{[T_k(\rhoeps)]_{\delta}}\|_{L^1(I',L^{\frac{2\gamma}{\gamma -1}}(\oeps))} 
\bigg) .
\end{align*}
Now, we let $\delta \to 0$. Critical terms including time-derivatives of regularization do not occur in the estimates above. Using (see \cite[Lemma 6.5]{novotny2004introduction})
\begin{align*}
\left[ T_k(\rhoeps)\right]_{\delta} &\rightarrow T_k(\rhoeps) &\mbox{ in }& L^{\tilde{s}}(\overline{I}', L^q(\oeps)) \quad \mbox{ for } q,\tilde{s} \in [1,\infty),
\\
\left[ T_k(\rhoeps) \ueps\right]_{\delta} &\rightarrow T_k(\rhoeps) \ueps &\mbox{ in }& L^2(I',L^6(\oeps)),
\\
\left[ T_k(\rhoeps)\right]_{\delta} &\rightarrow T_k(\rhoeps) &\mbox{ in }& L^p(I',L^{\infty}(\oeps)) \quad \mbox{ for } p \in [1,\infty),
\\
\left[(\rhoeps T_k'(\rhoeps) - T_k(\rhoeps))\div \ueps \right]_{\delta} &\rightarrow (\rhoeps T_k'(\rhoeps) - T_k(\rhoeps))\div \ueps &\mbox{ in }& L^2(I',L^2(\oeps)) , 
\end{align*}
for any $k$ or $\vareps$, and same results are valid for the 
mean-values. Hence, for $\delta \to 0$ we obtain from $\eqref{ineq:aux_strong_conv_density_Aeps_k_11}$ that  (using $a_{\vareps,k}^{\delta} \to 0$ for $\delta \to 0$) 
\begin{align*}
A_{\vareps,k}^{1,1} &\le \int_0^T\int_{\oeps} q_{\vareps} \psi   \widetilde{T_k(\rhoeps)} dx dt - \int_0^T \int_{\Omega} \psi \widetilde{p_0} \Myf\left(\overline{T_k(\rho)}\right) dx dt 
\\
& \ + C \bigg( \vareps^{\frac{\lambda}{2}}\|\psi'\|_{L^1(0,T)} \|\widetilde{T_k(\rhoeps) }\|_{L^{\infty}((0,T),L^{\frac{2\gamma}{\gamma -1}}(\oeps))} 
+ \vareps^{\frac{\lambda}{2}} \|\psi\|_{L^{\infty}(0,T)}\| T_k(\rhoeps) \ueps\|_{L^1((0,T),L^{\frac{2\gamma}{\gamma -1}}(\oeps))}
\\
& \ + \vareps^{\frac{\lambda}{2} - \frac{3}{2\gamma}} \|\psi\|_{L^{\infty}(0,T)} \| \widetilde{(\rhoeps T_k'(\rhoeps) - T_k(\rhoeps))\div \ueps )} \|_{L^1((0,T),L^2(\oeps))} \\ 
& \ + \vareps^{\frac{\lambda}{s} - \frac{2\theta}{s} - 1} \|\psi\|_{L^\infty((0,T)} \| \widetilde{T_k(\rhoeps)}\|_{L^{s'}((0,T),L^{\frac{s'\gamma p}{(p-2)\gamma -p}}(\oeps))}
+ \vareps^{\frac{\lambda}{2}} \|\psi\|_{L^{\infty}(0,T)} \| \widetilde{T_k(\rhoeps)}\|_{L^1((0,T),L^{\frac{2\gamma}{\gamma -1}}(\oeps))}
\bigg) .
\end{align*}
Next, we  replace the function $\psi \in C_0^{\infty}(0,T)$ by $\psi = 1$ which is even though is not an admissible test-function for $\eqref{eq:renormalized_continuity_equation}$, since it has no compact support. However, the estimates above allow a density argument, more precisely we choose a sequence $\psi = \psi_m \in C_0^{\infty}(0,T)$ with $0 \le \psi_m \le 1$ and $\psi_m \rightarrow 1$ in $L^p(0,T)$ for every $p \in [1,\infty)$ and $\psi_m'$ is bounded in $L^1(0,T)$ (this is the only norm for which the derivative $\psi_m'$ occurs in the estimates above). Hence, in all calculations above we can replace $\psi$ by $\psi = 1$ (also on the left-hand side in $\eqref{ineq:aux_estimate_osc_defect}$), and in particular, we have
\begin{equation}\label{interm-A-11-k}
\begin{aligned}
A_{\vareps,k}^{1,1} \le& \int_0^T\int_{\oeps} q_{\vareps}   \widetilde{T_k(\rhoeps)} dx dt - \int_0^T \int_{\Omega}  \widetilde{p_0} \Myf\left(\overline{T_k(\rho)}\right) dx dt 
\\
&+ C \bigg( \vareps^{\frac{\lambda}{2}} \|\widetilde{T_k(\rhoeps) }\|_{L^{\infty}((0,T),L^{\frac{2\gamma}{\gamma -1}}(\oeps))}
+ \vareps^{\frac{\lambda}{2}} \| T_k(\rhoeps) \ueps\|_{L^1((0,T),L^{\frac{2\gamma}{\gamma -1}}(\oeps))}
\\
&+ \vareps^{\frac{\lambda}{2} - \frac{3}{2\gamma}}  \| \widetilde{(\rhoeps T_k'(\rhoeps) - T_k(\rhoeps))\div \ueps )} \|_{L^1((0,T),L^2(\oeps))} \\
&+ \vareps^{\frac{\lambda}{s} - \frac{2\theta}{s} - 1}  \| \widetilde{T_k(\rhoeps)}\|_{L^{s'}((0,T),L^{\frac{s'\gamma p}{(p-2)\gamma -p}}(\oeps))}
+ \vareps^{\frac{\lambda}{2}}  \| \widetilde{T_k(\rhoeps)}\|_{L^1((0,T),L^{\frac{2\gamma}{\gamma  -1}}(\oeps))} 
\bigg) . 
\end{aligned}
\end{equation} 
Now, we estimate all the norms in the right-hand side of \eqref{interm-A-11-k} including the cut-off function $T_k$. We have for all $s\geq 0$ 
\begin{align*}
    T_k(s) \le k, \qquad |sT_k'(s) - T_k(s)| \le 2k.
\end{align*}
This implies for a constant $C_k>0$ (which might explode for $k\to \infty$) and using the inequality $\eqref{ineq:Gagliardo_Nirenberg_ueps}$
\begin{align}\label{inter-A-11-k-2}
A_{\vareps,k}^{1,1} \le & \int_0^T\int_{\oeps} q_{\vareps}   \widetilde{T_k(\rhoeps)} dx dt - \int_0^T \int_{\Omega}  \widetilde{p_0} \Myf\left(\overline{T_k(\rho)}\right) dx dt 
\notag \\
&+ C_k \left(  \vareps^{\frac{\lambda}{2}} + \vareps^{\frac{\lambda}{2}} \| \ueps \|_{L^1((0,T),L^{\frac{2\gamma}{\gamma -1}}(\oeps))} + \vareps^{\frac{\lambda}{2} - \frac{3}{2\gamma}}\|\div \ueps \|_{L^1((0,T),L^2(\oeps))} + \vareps^{\frac{\lambda}{s} - \frac{2\theta}{s} - 1} \right)
\notag \\
\le& \int_0^T\int_{\oeps} q_{\vareps}   \widetilde{T_k(\rhoeps)} dx dt - \int_0^T \int_{\Omega}  \widetilde{p_0} \Myf\left(\overline{T_k(\rho)}\right) dx dt 
\notag \\
&+ C_k \left(  \vareps^{\frac{\lambda}{2}} + \vareps^{\frac{\lambda}{2} - \frac{3}{2\gamma}} + \vareps^{\frac{\lambda}{2} - \frac{3}{2\gamma} -1}+ \vareps^{\frac{\lambda}{s} - \frac{2\theta}{s} - 1} \right),
\end{align}
where  in the last inequality we used the a priori estimates for $\ueps$ from Proposition \ref{prop:apriori_basic} and $\eqref{ineq:Gagliardo_Nirenberg_ueps}$. Note that for any $k$, the terms in the second line of the above inequality tends to zero for $\vareps \to 0$ (under the assumption that $\lambda>1+ \frac{3}{\gamma}$, see also Remark \ref{rem:improved_pressure_estimate}).

It remains to estimate the term 
\begin{align*}
B_{\vareps,k}:= \int_0^T\int_{\oeps} q_{\vareps}   \widetilde{T_k(\rhoeps)} dx dt - \int_0^T \int_{\Omega}  \widetilde{p_0} \Myf\left(\overline{T_k(\rho)}\right) dx dt 
\end{align*}
in the right-hand side of \eqref{inter-A-11-k-2}. 
We emphasize that $q_{\vareps}$ converges to $p_0$ in the weak two-scale sense (see \eqref{weak-two-scale-q_eps}). Hence, if we would have the strong (two-scale) convergence of $T_k(\rhoeps)$ to $\overline{T_k(\rho)}$, we would obtain  that $B_{\vareps,k} \rightarrow 0$ for $\vareps \to 0$ for each $k$. To make this argument rigorous we proceed in the following way. We have
\begin{align*}
B_{\vareps,k} =& \int_0^T \int_{\oeps} q_{\vareps} \left[ \widetilde{T_k(\rhoeps)} - \frac{1}{|Y_f|} \widetilde{ \Myf(\overline{T_k(\rho)})}  \right] dx dt 
\\
+& \int_0^T \int_{\Omega} \left[ \frac{\chi_{\oeps} q_{\vareps}}{|Y_f|} - p_0 \right] \widetilde{ \Myf(\overline{T_k(\rho)})  } dx dt =: B_{\vareps,k}^1 + B_{\vareps,k}^2.
\end{align*}
Due to the two-scale convergence of $ \frac{\chi_{\oeps} q_{\vareps}}{|Y_f|} - p_0$ to $\left( \frac{\chi_{Y_f}}{|Y_f|} - 1\right) p_0$,  see again $\eqref{weak-two-scale-q_eps}$, we obtain (for each $k$) that $B_{\vareps,k}^2 \rightarrow 0 $ as $\vareps \to 0$. For the first term $B_{\vareps,k}^1$ we first notice that, due to Proposition \ref{prop:improved_pressure_estimate_Bogovskii}, we have that $q_{\vareps}$ is bounded in
\begin{align}\label{decomposition_qeps}
 L^2((0,T),H^1(\oeps)) + L^{\infty}((0,T),W^{1,\gamma}(\oeps)) + \vareps\left( L^2((0,T)\times \oeps) + L^{\infty}((0,T),L^{\gamma}(\oeps)) \right).
\end{align}
Without loss of generality we assume that $\gamma \in \left(\frac95,2\right]$ and we assume   (see also Remark \ref{rem:qeps_L2H1} below for the treatment of the additional term in the decomposition bounded in $L^2((0,T),H^1(\oeps))$)
\begin{align}\label{ineq:bound_qeps}
    \|q_{\vareps}\|_{L^{\infty}((0,T),W^{1,\gamma}(\oeps))} \le C.
\end{align}
Here, we emphasize that the parts in the decomposition of $q_{\vareps}$ bounded by $\vareps$ have no contribution for $\vareps \to 0$. Now, observe that  (we can suppress the mean value $\, \widetilde{\cdot}$)
\begin{align*}
B_{\vareps,k}^1 \le C \left\| T_k(\rhoeps) - \frac{1}{|Y_f|}  \Myf(\overline{T_k(\rho)})\right\|_{L^1((0,T),W^{1,\gamma}(\oeps)')}.
\end{align*}
We have
\begin{align*}
\bigg\| T_k(\rhoeps) - \frac{1}{|Y_f|}& \Myf (\overline{T_k(\rho)})\bigg\|_{L^1((0,T),W^{1,\gamma}(\oeps)')} \\
\le & \| T_k(\rhoeps) - \rhoeps\|_{L^1((0,T),W^{1,\gamma}(\oeps)')} + \left\|\rhoeps - \frac{1}{|Y_f| } \Myf (\rho_0) \right\|_{L^1((0,T),W^{1,\gamma}(\oeps)')}
\\
&+ \frac{1}{|Y_f|} \left\| \Myf (\rho_0) - \Myf (\overline{T_k(\rho)}) \right\|_{L^1((0,T),W^{1,\gamma}(\oeps)')}
\\
=&: \sum_{i=1}^3 B_{\vareps,k}^{1,i}.
\end{align*}
Due to the embedding $W^{1,\gamma}(\oeps) \subset L^{\gamma^{\ast}}(\oeps) $ with $\frac{1}{\gamma^{\ast}} = \frac{1}{\gamma} - \frac13$, we get $L^{(\gamma^{\ast})'}(\oeps) \cong L^{\gamma^{\ast}}(\oeps)' \subset W^{1,\gamma}(\oeps)'$ and therefore with $(\gamma^{\ast})' = \frac{3\gamma}{4\gamma - 3} $ 
\begin{align*}
\sup_{\vareps >0 } B_{\vareps,k}^{1,1} &\le \sup_{\vareps > 0}\|T_k(\rhoeps) - \rhoeps\|_{L^1((0,T),L^\frac{3\gamma}{4\gamma - 3}(\oeps))} \le h(k)
\end{align*}
with $h(k) \to 0$ for $k\to \infty$ (here $h(k)$ will be a generic function, independent of $\vareps$, which may change in the following, but always tending to $0$ for $k\to \infty$). The last inequality follows from  Proposition \ref{prop:apriori_pressure_refined}, Chebyshev and H\"older inequality, and the fact that $\frac{3\gamma}{4\gamma - 3} < \frac53 \gamma -1 = \gamma + \nu$. We refer to \cite[Exercise 7.59]{novotny2004introduction} for more details on the precise structure of $h(k)$.

\vspace{.2cm}

Next, we consider the term $B_{\vareps,k}^{1,3}$. With the same notation as above and similar arguments, we get
\begin{align*}
B_{\vareps,k}^{1,3} \le C \|\Myf (\rho_0) - \Myf(\overline{T_k(\rho)}) \|_{L^1((0,T),L^{(\gamma^{\ast})'}(\Omega))} 
\le C \|\rho_0 - \overline{T_k(\rho)} \|_{L^1((0,T),L^{\frac{3\gamma}{4\gamma - 3}}(\Omega \times Y_f))}.
\end{align*}
Due to Proposition \ref{prop:weak_compactness}, we have that $\chi_{\oeps} (\rhoeps - T_k(\rhoeps)) $ converges weakly in the two-scale sense to $\chi_{Y_f}(\rho - \overline{T_k(\rho)})$ in $L^s((0,T),L^{\gamma}(\Omega))$ for arbitrary $s \in [1,\infty)$, and therefore in particular in $L^1((0,T),L^{\frac{3\gamma}{4\gamma - 3}}(\Omega))$, since $\frac{3\gamma}{4\gamma - 3} < \gamma$ for $\gamma > \frac32$. Hence, due to the lower semicontinuity of the norm with respect to the weak two-scale convergence, we have
\begin{align*}
\|\rho_0 - \overline{T_k(\rho)}\|_{L^1((0,T),L^{\frac{3\gamma}{4\gamma - 3}}(\Omega \times Y_f))} \le \liminf_{\vareps \to 0} \|\rhoeps - T_k(\rhoeps)\|_{L^1((0,T),L^{\frac{3\gamma}{4\gamma - 3}}(\oeps))} \le h(k).
\end{align*}
The last inequality was already obtained above for the term $B_{\vareps,k}^{1,1}$.

\vspace{.2cm}

It remains to estimate the term $B_{\vareps,k}^{1,2}$. We emphasize that $B_{\vareps,k}^{1,2}$ is not depending on $k$. We find
\begin{align*}
B_{\vareps,k}^{1,2} &\le \|\chi_{\oeps} \rhoeps - \Myf(\rho_0)\|_{L^1((0,T),W^{1,\gamma}(\Omega)')} + \left\| \Myf (\rho_0) \left( 1 - \frac{\chi_{\oeps}}{|Y_f|}\right) \right\|_{L^1((0,T),W^{1,\gamma}(\Omega)')} \\
& =: B_{\vareps,k}^{1,2,1} + B_{\vareps,k}^{1,2,2}.
\end{align*}
It is well-known that $\left( 1 - \frac{\chi_{\oeps}}{|Y_f|}\right) \rightharpoonup 0$ weakly in $L^p(\Omega)$ for every $p\in [1,\infty)$ (even weakly$^{\ast}$ in $L^{\infty}(\Omega)$). Since the embedding $W^{1,p}(\Omega)$ into $L^p(\Omega)$ is compact, we obtain from the Schauder theorem, see \cite[X, 4]{Yosida1980}, that the embedding $ L^{p'}(\Omega) \hookrightarrow W^{1,p}(\Omega)'$ is compact. Hence, we obtain $\left( 1 - \frac{\chi_{\oeps}}{|Y_f|}\right) \rightarrow 0$ strongly in $W^{1,p}(\Omega)'$ for every $p \in (1,\infty)$. In particular, since this sequence is independent of time, this convergence is also valid in $L^1((0,T),W^{1,\gamma}(\Omega)')$. Since $\Myf (\rho_0) \in L^1((0,T),L^{\frac{3\gamma}{4\gamma - 3}}(\Omega))$, we obtain with Lemma \ref{lem:strong_convergence_product} that
\begin{align*}
B^{1,2,2}_{\vareps,k}=\left\| \Myf (\rho_0) \left( 1 - \frac{\chi_{\oeps}}{|Y_f|}\right) \right\|_{L^1((0,T),W^{1,\gamma}(\Omega)')} \rightarrow 0 \, \text{ as } \,  \vareps \to 0. 
\end{align*}
Next, we  estimate the term $B_{\vareps,k}^{1,2,1}$. Here we follow ideas from \cite{feireisl2010homogenization}, but provide additional details.  First of all, we notice that from $\eqref{eq:weak_mass_balance}$ we obtain for every $\psi \in C_0^{\infty}([0,T)\times \overline{\Omega})$ that for any $\tau \in [0,T]$ it holds that (remember that we assumed $\rhoeps \in C([0,T],L_{\mathrm{weak}}^{\gamma}(\oeps))$ for a weak solution)
\begin{align*}
\int_0^{\tau} \int_{\Omega} (\chi_{\oeps}\rhoeps \partial_t \psi + \chi_{\oeps} \rhoeps \ueps \cdot \nabla \psi) dx dt = \int_{\Omega} \chi_{\oeps} \rhoeps(\tau) \psi(\tau) dx - \int_{\Omega} \chi_{\oeps} \rho_{\vareps,0} \psi(0) dx.
\end{align*}
In particular, choosing $\psi(t,x) := \eta(x) \xi(t)$ with $\xi \in C_0^{\infty}((0,T))$ and $\eta \in C_0^{\infty}(\overline{\Omega})$ we obtain by integration by parts that
\begin{align*}
\frac{d}{dt} \int_{\Omega}  \chi_{\oeps} \rhoeps \eta dx = \int_{\Omega} \chi_{\oeps} \rhoeps \ueps \cdot \nabla \eta dx \qquad \mbox{in } \mathcal{D}'((0,T))
\end{align*}
and for all $\eta \in C_0^{\infty}(\overline{\Omega})$. Due to the a priori estimates in Proposition \ref{prop:apriori_basic} we obtain, since $\gamma > \frac95$ and therefore $\gamma + \nu = \frac53 \gamma - 1>2$,
\begin{align*}
\int_0^T \left| \int_{\Omega} \chi_{\oeps} \rhoeps \ueps \cdot \nabla \eta dx \right| dt  &\le C \|\rhoeps\|_{L^{\gamma + \nu}((0,T)\times \oeps)} \|\ueps\|_{L^2((0,T)\times \oeps)} \|\nabla \eta \|_{L^{\frac{10\gamma - 6}{5\gamma -9}}(\Omega)}
\\
&\le C \|\nabla \eta \|_{L^{\frac{10\gamma - 6}{5\gamma -9}}(\Omega)}.
\end{align*}
This implies $\chi_{\oeps} \rhoeps \in W^{1,1}((0,T),W^{1,\frac{10\gamma - 6}{5\gamma -9}}(\Omega)') \hookrightarrow C^0([0,T],W^{1,\frac{10\gamma - 6}{5\gamma -9}}(\Omega)')$ bounded. Further, $\chi_{\oeps}\rhoeps $ is bounded in $L^{\infty}((0,T),L^{\gamma}(\Omega))$. Arguing as in the proof of \cite[Lemma 6.2]{novotny2004introduction}, we obtain up to a subsequence for a $\tilde{\rho} \in C^0([0,T],L_{\mathrm{weak}}^{\gamma}(\Omega))$
\begin{align*}
\chi_{\oeps} \rhoeps \rightarrow \tilde{\rho} \qquad\mbox{in } C^0([0,T],L_{\mathrm{weak}}^{\gamma}(\Omega)).
\end{align*}
The weak convergence of $\chi_{\oeps} \rhoeps $ to $\Myf(\rho_0)$ in $L^s((0,T),L^{\gamma}(\Omega)$ for arbitrary $s\in [1,\infty)$, which is a consequence of the weak two-scale convergence of $\chi_{\oeps}\rhoeps$, implies $\tilde{\rho} = \Myf(\rho_0)$ and therefore $\chi_{\oeps} \rhoeps \rightarrow  \Myf(\rho_0)$ in $C^0([0,T],L_{\mathrm{weak}}^{\gamma}(\Omega))$. Using the compactness of the embedding $W^{1,\gamma}(\Omega)$ into $L^{\gamma'}(\Omega)$ for $\gamma > \frac95$, we obtain that $L^{\gamma}(\Omega) \hookrightarrow W^{1,\gamma}(\Omega)'$ compact (using again the Schauder theorem). Hence, with the same arguments as in the proof of \cite[Lemma 6.4]{novotny2004introduction}, we obtain for every $s \in [1,\infty)$
\begin{align*}
\chi_{\oeps} \rhoeps \rightarrow  \Myf(\rho_0) \qquad\mbox{strongly in } L^s((0,T),W^{1,\gamma}(\Omega)') .
\end{align*}
Hence, we obtain 
\begin{align*}
B_{\vareps,k}^{1,2,1} = \|\chi_{\oeps}\rhoeps - \Myf(\rho_0)\|_{L^1((0,T),W^{1,\gamma}(\Omega)')} \rightarrow 0
\end{align*}
for $\vareps \to 0$. In summary, we obtain
\begin{align*}
    B_{\vareps,k} \le \Delta_{\vareps,k} + h(k),
\end{align*}
where $\Delta_{\vareps,k} \rightarrow 0$ as $\vareps \to 0$ for each  $k$. Using $\eqref{ineq:aux_estimate_osc_defect}$, we finally end up with
\begin{align}\label{ineq:aux_estimate_osc_defect2}
     \left\| T_k \left(\teps \rhoeps\right) - T_k(\rho_0)\right\|_{L^{\gamma + 1}((0,T)\times \Omega \times Y_f)} \le \Delta_{\vareps,k} + h(k).
\end{align}
\begin{remark}\label{rem:qeps_L2H1}
We made without loss of generality the assumption $\eqref{ineq:bound_qeps}$. However, in the decomposition $\eqref{decomposition_qeps}$ also the bound in $L^2((0,T),H^1(\oeps))$ is critical. Let us shortly sketch that the previous arguments also hold in this case (the critical part is now the time-integrability, since $H^1(\oeps) \hookrightarrow W^{1,\gamma}(\oeps)$ for $\gamma \le 2$). We have to bound the term
\begin{align*}
\left\| T_k(\rhoeps) - \frac{1}{|Y_f|} \Myf (\overline{T_k(\rho)})\right\|_{L^2((0,T),H^1(\oeps)')}.
\end{align*}
Due to the embedding $L^{\frac65}(\oeps) \hookrightarrow H^1(\oeps)'$, we can consider in the terms $B_{\vareps,k}^1$ and $B_{\vareps,k}^3$ the norms on $L^2((0,T),L^{\frac65}(\oeps))$, and since $\rhoeps $ is  uniformly bounded in $L^{\infty}((0,T),L^{\gamma}(\oeps)) \hookrightarrow L^2((0,T),L^{\frac65}(\oeps))$, all the calculations above remain valid.
\end{remark}

To conclude, we use
\begin{align*}
\|\teps \rhoeps - \rho_0 \|_{L^1((0,T)\times \Omega \times Y_f)} \le& \|\teps \rhoeps - T_k(\teps \rhoeps) \|_{L^1((0,T)\times \Omega \times Y_f)} 
+ \| T_k(\teps \rhoeps) - T_k(\rho_0)\|_{L^1((0,T)\times \Omega \times Y_f)} 
\\
&+ \|T_k(\rho_0) - \rho_0\|_{L^1((0,T)\times \Omega \times Y_f)} =: \sum_{i=1}^3 C_{\vareps,k}^i.
\end{align*}
Using $T_k(\teps \rhoeps) = \teps T_k(\rhoeps)$, the properties of the unfolding operator, and similar arguments as above for the terms $B_{\vareps,k}^1$ and $B_{\vareps,k}^3$, we obtain $\sup_{\vareps >0} \left( C_{\vareps,k}^1 +C_{\vareps,k}^3\right) \le h(k)$. Further, using $\eqref{ineq:aux_estimate_osc_defect2}$ and the H\"older inequality, we get
\begin{align*}
C_{\vareps,k}^2 \le \Delta_{\vareps,k} +  h(k) 
\end{align*}
with $\Delta_{\vareps,k} \to 0$  as $\vareps \to 0$ for each $k$. Hence, we obtain $\teps \rhoeps \rightarrow \rho_0$ in $L^1((0,T)\times \Omega \times Y_f)$. Using interpolation and the boundedness of $\rhoeps $ in $L^{\gamma + \nu}((0,T)\times \oeps)$ from Proposition \ref{prop:apriori_pressure_refined}, we obtain that $\teps \rhoeps \rightarrow \rho_0 $ in $L^s((0,T)\times \Omega \times Y_f)$ for every $s \in [1,\gamma + \nu)$.
\end{proof}

\section{Derivation of the macroscopic model}
\label{sec:derivation_macro_model}

Based on the compactness results in Section \ref{sec:compactness} we now derive the limit model for $\vareps \to 0$. 

\begin{proposition}\label{prop:div_y_rho_0_u_0}
For the limit functions $\rho_0$ and $\uvect_0$ from Proposition \ref{prop:weak_compactness} it holds almost everywhere in $(0,T)\times \Omega \times Y_f$ that
\begin{align*}
\div_y(\rho_0 \uvect_0) = 0.
\end{align*}
\end{proposition}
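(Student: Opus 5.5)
We will test the weak mass balance \eqref{eq:weak_mass_balance} with oscillating test functions of the form
\begin{align*}
\psieps(t,x) := \vareps\, \phi\bftx,
\end{align*}
where $\phi \in C_0^{\infty}([0,T)\times \Omega, C^{\infty}_{\per}(Y))$ is arbitrary. Since $\psieps$ is defined on all of $\overline{\Omega}$, it is admissible in \eqref{eq:weak_mass_balance} (which allows test functions up to the boundary of $\oeps$, in particular up to $\geps$). Taking $\tau = T$ and using $\nabla \psieps = \vareps (\nabla_x \phi)\bftx + (\nabla_y \phi)\bftx$, we obtain
\begin{align*}
\vareps\int_0^T\!\!\int_{\oeps} \rhoeps \partial_t\phi\bftx \,dx\,dt + \int_0^T\!\!\int_{\oeps} \rhoeps\ueps\cdot\big(\vareps\nabla_x\phi + \nabla_y\phi\big)\bftx \,dx\,dt = -\vareps\int_{\oeps}\rho_{\vareps,0}\,\phi\Big(0,x,\fxe\Big)dx .
\end{align*}

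All terms carrying a factor $\vareps$ vanish as $\vareps\to 0$. Indeed, $\rhoeps$ is bounded in $L^\infty((0,T),L^\gamma(\oeps))$ by Proposition \ref{prop:apriori_basic}, $\rho_{\vareps,0}$ is bounded in $L^\gamma$ by \eqref{Assump-data}, and $\rhoeps\ueps$ is bounded in $L^1$ by $\|\rhoeps\|_{L^{2}}\|\ueps\|_{L^2}$. Here we use $\gamma+\nu>2$ for $\gamma>\frac95$ (Proposition \ref{prop:apriori_pressure_refined}).

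The key step is passing to the limit in $\int_0^T\int_{\oeps}\rhoeps\ueps\cdot\nabla_y\phi\bftx$. We unfold this term: it equals
\begin{align*}
\int_0^T\int_\Omega\int_{Y_f}\teps\rhoeps\,\teps\ueps\cdot\teps\big[(\nabla_y\phi)(\cdot,\cdot,\tfrac{\cdot}{\vareps})\big]\,dy\,dx\,dt
\end{align*}
up to the usual boundary-layer error, which vanishes here since $\Omega$ is an exact union of $\vareps$-cells. By Proposition \ref{prop:strong_compactness_rhoeps}, $\teps\rhoeps\to\rho_0$ strongly in $L^s$ for any $s<\gamma+\nu$, and we choose $s=2$, which is admissible since $\gamma+\nu>2$. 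Moreover $\teps\ueps\rightharpoonup\uvect_0$ weakly in $L^2$ by Proposition \ref{prop:weak_compactness}, and the unfolded smooth test function converges strongly, e.g.\ uniformly. The product therefore converges to
\begin{align*}
\int_0^T\int_\Omega\int_{Y_f}\rho_0\uvect_0\cdot\nabla_y\phi \,dy\,dx\,dt ,
\end{align*}
and this limit vanishes. The main obstacle is exactly this strong-times-weak pairing, which needs the integrability margin $\gamma+\nu>2$. That margin is where the restriction $\gamma>\frac95$ enters; if it were lacking, one would instead pair in $L^{s}\times L^{2}$ with $s$ slightly below $2$ after interpolation.

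Since $\phi$ is arbitrary and $Y$-periodic, and $\uvect_0=0$ on $Y_s$ (so extending by zero poses no issue), we conclude that $\div_y(\rho_0\uvect_0)=0$ in the distributional sense on $(0,T)\times\Omega\times Y$ with periodicity. Because $\rho_0$ is independent of $y$ by Proposition \ref{prop:strong_compactness_rhoeps}, this reads $\rho_0\,\div_y\uvect_0=0$. Here $\div_y\uvect_0\in L^2$ since $\uvect_0\in L^2((0,T)\times\Omega,H^1_{\per}(Y))$, so the identity holds almost everywhere, as claimed.
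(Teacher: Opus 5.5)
Your proof is correct and takes essentially the paper's route: you test the mass balance with $\vareps\phi\bigl(t,x,\frac{x}{\vareps}\bigr)$, drop the $O(\vareps)$ terms, and pass to the limit in the $\nabla_y\phi$ term by pairing the strong $L^2$ convergence of $\teps\rhoeps$ (available since $\gamma+\nu>2$) with the weak $L^2$ convergence of $\teps\ueps$; using $Y$-periodic test functions on all of $Y$, and using the $y$-independence of $\rho_0$ to read off the a.e.\ identity, are only minor refinements. The one inaccurate point is your closing aside, since strong $L^s$ convergence with $s<2$ cannot be paired by H\"older with a merely weakly convergent $L^2$ sequence, but it concerns a case excluded by $\gamma>\frac95$ and is not used in your argument.
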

\begin{proof}
We test the mass balance equation $\eqref{eq:weak_mass_balance}$ with $\psieps(t,x):= \vareps \psi\left(t,x,\fxe\right)$ with $\psi \in C_0^{\infty}((0,T)\times \Omega \times Y_f)$ and obtain
\begin{align*}
\int_0^T \int_{\oeps} \vareps \rhoeps \partial_t \psi\left(t,x,\fxe\right) dx dt + \vareps \int_0^T \int_{\oeps} \rhoeps \ueps \cdot \left[ \nabla_x \psi + \foe \nabla_y \psi\right] \left(t,x,\fxe\right) dx dt = 0.
\end{align*}
For $\vareps \to 0$ we obtain with the compactness results from Propositions \ref{prop:weak_compactness} and \ref{prop:strong_compactness_rhoeps} that
\begin{align*}
\int_0^T \int_{\Omega} \int_{Y_f} \rho_0 \uvect_0 \cdot \nabla_y \psi dy dx dt = 0.
\end{align*}
This implies the desired result.
\end{proof}

Next, we test the impulse equation $\eqref{eq:weak_impulse}$ with $\phieps(t,x):= \phi\left(t,x,\fxe\right)$ with $\phi \in C_0^{\infty}([0,T) \times \Omega, C_{\per}^{\infty}(Y))^3$ with $\phi = 0$ in $Y_s$ and $\nabla_y \cdot \phi = 0$ to obtain
\begin{align*}
\int_0^T &\int_{\oeps} \bigg\{ \vareps^{\lambda}\rhoeps  \ueps \cdot \partial_t \phi \left(t,x,\fxe\right) + \vareps^{\lambda}\rhoeps \ueps \otimes \ueps : \left[ \nabla_x \phi\left(t,x,\fxe\right) + \foe \nabla_y \phi\left(t,x,\fxe\right)\right]
\\
-& \vareps^2 \mathbb{S}(\nabla \ueps ) : \left[ \nabla_x \phi\left(t,x,\fxe\right) + \foe \nabla_y \phi\left(t,x,\fxe\right)\right]
\\
+& p(\rhoeps) \div_x \phi\left(t,x,\fxe\right)  + \rhoeps \nabla F \cdot \phi \left(t,x,\fxe\right) dx dt
\\
&= - \int_{\oeps} \vareps^{\lambda} \m_{\vareps,0} \phi\left(0,x,\fxe\right)dx.
\end{align*}
We pass to the limit $\vareps \to 0$ separately in each term. 

For the first term we use  the strong two-scale convergence of $\rhoeps$ from Proposition \ref{prop:strong_compactness_rhoeps} and the weak two-scale convergence of $\ueps$ from Proposition \ref{prop:weak_compactness}. 
First of all, we have for $\gamma \geq 2$:
\begin{align*}
\left|\int_0^T \int_{\oeps} \vareps^{\lambda} \rhoeps \ueps\cdot \partial_t \phi \left(t,x,\fxe\right)dx dt \right| \le C \vareps^{\lambda} \|\rhoeps\|_{L^{\infty}((0,T),L^{\gamma}(\oeps))} \|\ueps\|_{L^2((0,T)\times \oeps)}  \le C\vareps^{\lambda}.
\end{align*}
Secondly, for $\gamma < 2$ we use Gagliardo-Nirenberg inequality with $\theta=\frac{6-3\gamma}{2\gamma}$ to obtain 
\begin{align*}
\|\ueps\|_{L^{\frac{\gamma}{\gamma -1}}(\oeps)} \le C\|\nabla \ueps\|^{\frac{6-3\gamma}{2\gamma}}_{L^2(\oeps)} \|\ueps\|^{\frac{5\gamma - 6}{2\gamma}}_{L^2(\oeps)} + \|\ueps\|_{L^2(\oeps)}
\end{align*}
so that 
\begin{align*}
\left|\int_0^T \int_{\oeps} \vareps^{\lambda} \rhoeps \ueps\cdot \partial_t \phi \left(t,x,\fxe\right)dx dt \right|  &\le C \vareps^{\lambda} \|\rhoeps\|_{L^{\infty}((0,T),L^{\gamma}(\oeps))} \|\ueps\|_{L^2((0,T),L^{\frac{\gamma}{\gamma-1}}(\oeps))}
\\
&\le C \vareps^{\lambda} \left( \|\nabla \ueps\|^{\frac{6-3\gamma}{2\gamma}}_{L^2((0,T)\times \oeps)}\|\ueps\|^{\frac{5\gamma-6}{2\gamma}}_{L^2((0,T)\times \oeps)} + \|\ueps\|_{L^2((0,T)\times \oeps)} \right) \\
&\le C\vareps^{\lambda} \left( \vareps^{\frac{3\gamma - 6}{2\gamma}} +1\right).
\end{align*}
Hence, this term vanishes for $\vareps \to 0$, as we consider $\lambda > 1+\frac{3}{\gamma}$.

For the nonlinear term we use similar arguments as above (the term including $\nabla_x \phi$ is one order higher in $\vareps$ and we ignore it) and consequently
\begin{align*} 
\bigg|\int_0^T &\int_{\oeps} \vareps^{\lambda - 1} \rhoeps \ueps \otimes \ueps : \nabla_y \phi\left(t,x,\fxe\right) dxdt \bigg|
\\
&\le C\vareps^{\lambda - 1} \|\rhoeps\|_{L^{\infty}((0,T),L^{\gamma}(\oeps))} \|\ueps\|^2_{L^2((0,T),L^{\frac{2\gamma}{\gamma - 1}}(\oeps))} 
\\
&\le C \vareps^{\lambda-1} \left(  \|\nabla \ueps\|^{\frac{3}{\gamma}}_{L^2((0,T)\times \Omega_\vareps)} \|\ueps\|^{\frac{2\gamma-3}{\gamma}}_{L^2((0,T)\times \Omega_\vareps)} + \|\ueps\|^2_{L^2((0,T)\times \Omega_\vareps)}\right) \\ 
&\le C\vareps^{\lambda - 1} \left( \vareps^{-\frac{3}{\gamma}} + 1\right) \le C \vareps^{\lambda -1- \frac{3}{\gamma}} \overset{\vareps \to 0}{\longrightarrow} 0 \ \ \textnormal{ since }\lambda> 1+\frac{3}{\gamma},
\end{align*}
where we have used the Gagliardo-Nirenberg inequality (with $\theta=\frac{3}{2\gamma}$): 
\begin{align*}
\|\ueps\|_{L^{\frac{2\gamma}{\gamma-1}}(\Omega_\vareps)} \leq C \|\nabla \ueps\|^{\frac{3}{2\gamma}}_{L^2(\oeps)} \|\ueps\|^{\frac{2\gamma-3}{2\gamma}}_{L^2(\oeps)} + \|\ueps\|_{L^2(\oeps)}. 
\end{align*}

Next,
from the compactness result in Proposition \ref{prop:weak_compactness} we obtain with $\mathbb I \textnormal{d}:\nabla_y \phi= \div_y \phi = 0$
\begin{align*}
-\vareps \int_0^T \int_{\oeps}  \mathbb{S}(\nabla \ueps) : \nabla_y \phi\left(t,x,\fxe\right) dx dt &= -\mu \int_0^T \int_{\oeps} \vareps (\nabla \ueps + \nabla \ueps^T ) : \nabla_y \phi\left(t,x,\fxe\right) dx dt 
\\
&\overset{\vareps \to 0}{\longrightarrow} - 2\mu \int_0^T \int_{\Omega} \int_{Y_f} D_y(\uvect_0) : \nabla_y \phi \, dy dx dt
\end{align*}
with the symmetric gradient $D_y(\uvect_0) = \frac12 (\nabla_y \uvect_0 + \nabla_y \uvect_0^\top)$. For the pressure term we have using the strong convergence of the density from Proposition \ref{prop:strong_compactness_rhoeps}
\begin{align*}
\int_0^T \int_{\oeps} p(\rhoeps) \div_x \phi \left(t,x,\fxe\right) dxdt  \overset{\vareps\to 0}{\longrightarrow}& \int_0^T \int_{\Omega} \int_{Y_f} p_0 \div_x \phi \, dy dx dt 
\\
=& -\int_0^T \int_{\Omega} \int_{Y_f} \nabla p_0 \cdot \phi dy dx dt.
\end{align*}
Finally, for the terms including the data we have 
\begin{align*}
-\int_0^T &\int_{\oeps} \rhoeps \nabla F \cdot \phi\left(t,x,\fxe\right) dx dt - \int_{\oeps} \vareps^{\lambda} \m_{\vareps,0} \phi \left(0,x,\fxe\right) dx 
\\
&\overset{\vareps\to 0}{\longrightarrow} -\int_0^T\int_{\Omega} \int_{Y_f} \rho_0 \nabla F \cdot \phi dy dx dt. 
\end{align*}
Altogether, we obtain
\begin{align*}
2\mu \int_0^T \int_{\Omega} \int_{Y_f} D_y(\uvect_0) : D_y( \phi) dy dx dt 
= \int_0^T\int_{\Omega} \int_{Y_f}  \left[\rho_0 \nabla F - \nabla p_0\right] \cdot \phi dy dx dt 
\end{align*}
and by density this result is valid for all $\phi \in L^2((0,T)\times \Omega,H_{\per}^1(Y_f))^3 \cap L^2((0,T),L^{\gamma'}(\Omega,H^1_{\per}(Y_f)))^3$ with $\phi = 0$ on $\Gamma$ and $\div_y \phi = 0$ (remember that we have $\nabla p_0 \in L^2((0,T)\times \Omega)^3 + L^{\infty}((0,T),L^{\gamma}(\Omega))^3$).

Hence, there exists $p_1 \in L^2((0,T),L^q(\Omega,L^2(Y_f)))$ with $q = \min\{2,\gamma\}$, such that  
\begin{align*}
2\mu \int_0^T \int_{\Omega} \int_{Y_f} D_y(\uvect_0) : D_y( \phi) dy dx dt  - \int_0^T \int_{\Omega} \int_{Y_f} p_1 \operatorname{div}_y  \phi \, dy dx dt
= \int_0^T\int_{\Omega} \int_{Y_f}  \left[\rho_0 \nabla F - \nabla p_0\right] \cdot \phi dy dx dt 
\end{align*}
for all $\phi \in L^2((0,T)\times \Omega,H_{\per}^1(Y_f))^3 \cap L^2((0,T),L^{\gamma'}(\Omega,H^1_{\per}(Y_f)))^3$ with $\phi = 0$ on $\Gamma$.
In particular, we obtain for almost every $(t,x) \in (0,T)\times \Omega$
\begin{align*}
2\mu \int_{Y_f} D_y(\uvect_0) : D_y( \phi) dy  - \int_{Y_f} p_1 \operatorname{div}_y  \phi \, dy
=  \int_{Y_f}  \left[\rho_0 \nabla F - \nabla p_0\right] \cdot \phi dy 
\end{align*}
for all $\phi \in H_{\per}^1(Y_f)^3$ with $\phi = 0 $ on $\Gamma$. Since $\operatorname{div}_y (\rho_0 \uvect_0) = 0$, we obtain $(\uvect_0,p_1)$ is a solution of the problem
\begin{align*} 
    -2\mu \div_y (D_y( \uvect_0)) + \nabla_y p_1 &= \rho_0 \nabla F - \nabla p_0 &\mbox{ in }& \{\rho_0 > 0\}  \times Y_f,
\\
\div_y( \uvect_0) &= 0 &\mbox{ in }&  \{\rho_0 > 0\}  \times Y_f,
\\
\uvect_0 &= 0 &\mbox{ on }& \{\rho_0 > 0\}  \times \Gamma,
\\
(\uvect_0,p_1) \mbox{ are } Y\mbox{-periodic}.
\end{align*}
We emphasize that this problem only has a unique weak solution pointwise almost everywhere in $(0,T)\times \{\rho_0 > 0\} $ and we have no information on the set $\{ \rho_0 = 0\}$. By the linearity of the previous equation, we obtain for almost every $(t,x,y) \in \{\rho_0 >0\} \times Y_f$
\begin{align}\label{eq:representation_u0}
\uvect_0(t,x,y) &= \sum_{i=1}^3 \left[\rho_0(t,x) \nabla F(t,x) - \nabla_x p_0(t,x) \right]_i q_i,
\\
\label{eq:representation_p1}
p_1(t,x,y) &= \sum_{i=1}^3 \left[\rho_0(t,x) \nabla F(t,x) - \nabla_x p_0(t,x) \right]_i \pi_i,
\end{align}
where $(q_i,\pi_i) \in H_{\per}^1(Y_f)^3 \times L_0^2(Y_f)$ is the unique weak solution of the cell problem
\begin{align*}
-2\mu \div_y(D_y(q_i)) + \nabla_y \pi_i &= e_i &\mbox{ in }& Y_f,
\\
\div_y q_i &= 0 &\mbox{ in }& Y_f,
\\
q_i &= 0 &\mbox{ on }& \Gamma,
\\
(q_i,\pi_i) \mbox{ are } Y\mbox{-periodic}.
\end{align*}
We define the Darcy velocity via averaging with respect to $Y_f$ to obtain
\begin{align*}
 \bar{\uvect}_0(t,x):=\int_{Y_f} \uvect_0(t,x,y) dy = K\left[\rho_0 \nabla F - \nabla_x p_0 \right](t,x)
\end{align*}
with the permeability tensor $K\in \R^{3\times 3}$ defined by (for $i,j=1,2,3$)
\begin{align}\label{def:permeability_tensor}
K_{ij}:= \int_{Y_f} D_y(q_i): D_y(q_j) dy.
\end{align}
Since $K$ is positive, we obtain 
\begin{align*}
    \nabla_x p_0 = \rho_0 \nabla F - K^{-1} \bar{\uvect}_0 \qquad \mbox{a.e. in } \{\rho_0 > 0\}.
\end{align*}
Since $\rho_0 \nabla F - K^{-1} \bar{\uvect}_0 \in L^2((0,T)\times \Omega)^3$ (remember that $\gamma > \frac95$ and $\rho_0 \in L^{\gamma + \nu}((0,T)\times \Omega)$ with $\nu = \frac23 \gamma - 1$), we obtain
\begin{align}
   \rho_0 \nabla p_0 \in L^s((0,T)\times \Omega)^3
\end{align}
for a suitable $s>1$.

Finally, we test the mass balance equation $\eqref{eq:weak_mass_balance}$ with $\psieps(t,x) = \psi(t,x)$ with $\psi \in C_0^{\infty}([0,T) \times \overline{\Omega})$ to obtain
\begin{align*}
\int_0^T \int_{\oeps} \rhoeps \partial_t \psi  + \rhoeps  \ueps \cdot \nabla_x \psi dx dt = - \int_{\oeps} \rho_{\vareps,0} \psi(0) dx.
\end{align*}
For $\vareps \to 0$ we get with Propositions \ref{prop:weak_compactness} and \ref{prop:strong_compactness_rhoeps} together with the assumptions on $\rho_{\vareps,0}$ 
\begin{align*}
\int_0^T \int_{\Omega} |Y_f| \rho_0 \partial_t \psi + \rho_0 \bar{\uvect}_0 \cdot \nabla_x \psi dx dt = - \int_{\Omega} |Y_f| \rho_{0,0} \psi(0) dx .
\end{align*}
By density, this result is valid for all $\phi \in L^{s'}((0,T), W^{1,s'}(\Omega))$ with $\partial_t \phi \in L^{(\gamma + \nu)'}((0,T)\times \Omega)$ and $\phi(T) = 0$. The corresponding strong formulation with the representation for $\rho_0 \bar{\uvect}_0$ is 
\begin{align*}
|Y_f|\partial_t \rho_0 + \div_x \left(K \rho_0^2 \nabla F - a\rho_0\nabla \rho_0^{\gamma}\right) = 0 \qquad \mbox{in } (0,T)\times \Omega
\end{align*}
with the initial condition $\rho_0(0) = |Y_f| \rho_{0,0}$.

\vspace{.5cm}
\noindent
{\bf Acknowledgments.} 
The work of K.B. and \v{S}.N. are supported by  the Praemium Academiae of \v{S}.N. The Institute
of Mathematics, CAS is supported by RVO:67985840.

\begin{appendix}
\section{Auxiliary results}\label{sec:appendix}

The appendix contains some  classical technical results which are important for our homogenization work.

\subsection{Poincar\'e inequality}

It holds the following well-known Poincar\'e inequality for functions with vanishing traces on $\geps$:
\begin{lemma}[Poincar\'e inequality]\label{lem:Poincare_inequality}
Let $p \in [1,\infty)$. For every $\phieps \in W^{1,p}(\oeps)$ with $\phieps  =0 $ on $\geps$ it holds that
\begin{align*}
\|\phieps \|_{L^p(\oeps)} \le C \vareps \| \nabla \phieps\|_{L^p(\oeps)}.
\end{align*}
\end{lemma}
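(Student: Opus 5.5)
The plan is to prove the estimate first in a single reference cell and then scale it to the $\vareps$-cells. Let $\phi \in W^{1,p}(Y_f)$ have vanishing trace on $\Gamma$. Since $\Gamma = \partial Y_s$ has positive surface measure and $Y_f$ is a connected Lipschitz domain, the standard Poincaré inequality for functions vanishing on a boundary part of positive measure gives
\begin{align*}
\|\phi\|_{L^p(Y_f)} \le C_Y \|\nabla_y \phi\|_{L^p(Y_f)}.
\end{align*}
This follows from the usual compactness argument. Suppose the inequality fails. Then there is a sequence $\phi_m$ with $\|\phi_m\|_{L^p(Y_f)}=1$ and $\|\nabla \phi_m\|_{L^p(Y_f)}\to 0$. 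By Rellich compactness it converges strongly in $L^p$ to a constant. By continuity of the trace, this constant must vanish on $\Gamma$, so it is zero. This contradicts $\|\phi_m\|_{L^p(Y_f)}=1$.

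Next I would transfer the estimate to each micro-cell $\vareps(Y_f+k)$ with $k\in K_\vareps$. Set $\phi(y):=\phieps(\vareps(y+k))$. Then $\nabla_y\phi = \vareps (\nabla\phieps)(\vareps(y+k))$, and the Jacobian factor $\vareps^{3}$ appears identically on both sides. Hence
\begin{align*}
\|\phieps\|_{L^p(\vareps(Y_f+k))}^p \le C_Y^p\, \vareps^p \|\nabla \phieps\|^p_{L^p(\vareps(Y_f+k))},
\end{align*}
because the restriction of $\phieps$ vanishes on $\vareps(\Gamma+k)\subset\geps$.

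Finally I would sum over $k\in K_\vareps$. Since $\Omega=(a,b)$ with $a,b\in\Z^3$ and $\vareps^{-1}\in\N$, the closed cells $\vareps(\overline{Y}+k)$, $k\in K_\vareps$, tile $\overline{\Omega}$ exactly. Therefore $\oeps$ coincides, up to a null set, with the union of the sets $\vareps(Y_f+k)$, and summing the cell estimates gives the claim with $C=C_Y$, independent of $\vareps$.

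The only delicate point is the reference-cell inequality, in particular the connectedness of $Y_f$. This should be taken as part of the standing geometric assumptions, with $Y_s$ strictly included in $Y$ and $\partial Y_s$ Lipschitz. A second point is that restricting a $W^{1,p}(\oeps)$ function to a single cell keeps its trace zero on that cell's part of $\geps$. Both are routine, so I expect no real obstacle. Notably, no periodicity or boundary condition on the outer faces of the cells is needed, since the cell estimate uses only the hole boundary.
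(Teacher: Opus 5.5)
The paper gives no proof of this lemma (it is quoted as well known); your argument, which proves the reference-cell Poincar\'e inequality by compactness, rescales it to the cells $\vareps(Y_f+k)$, and sums over the exact tiling of $\Omega$ by $\vareps(Y+k)$, $k\in K_\vareps$, is the standard proof and is correct. The connectedness of $Y_f$ that you flag is not actually needed. Any connected component of $Y_f$ whose boundary missed $\Gamma$ would be both open and relatively closed in the connected set $Y$, which is impossible since $Y_s\neq\emptyset$. So, with $\Gamma$ Lipschitz, every component meets $\Gamma$ in a set of positive surface measure, and the compactness limit, being constant on each component with zero trace on $\Gamma$, still vanishes.
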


\subsection{The Bogovskii operator}
\label{sec:Bogovskii_operator}

In this section we introduce a Bogovskii operator $\Beps : {L_0^p(\oeps)} \rightarrow W_0^{1,p}(\oeps)^3$ for $p \in (1,\infty)$ such that $\div \Beps \feps = \feps $ and 
\begin{align*}
\vareps \|\nabla \Beps \feps \|_{L^p(\oeps)} \le C \left( \|\feps\|_{W^{1,p'}(\oeps)'} + \vareps \|\feps\|_{L^p(\oeps)}\right)
\end{align*}
with a constant $C>0$ independent of $\vareps$. 
For the construction we use the restriction operator $\Reps: W_0^{1,p}(\Omega)^3 \rightarrow W_0^{1,p}(\oeps)^3$ from \cite{Lu-Schwarzacher} (below we give more details)  and the standard Bogovskii operator $\B: L^p_0(\Omega) \rightarrow W_0^{1,p}(\Omega)^3$,  see \cite{BogovskiSolutionFirstBVP} and \cite{novotny2004introduction}, which fulfills 
\begin{align*}
\div \B f = f,\qquad \|\nabla \B f\|_{L^p(\Omega)} \le C \|f\|_{L^p(\Omega)}
\end{align*}
for all $f \in L_0^p(\Omega)$. This operator can be extended to a bounded linear operator 
\begin{align*}
\B: \left\{ f \in W^{1,p'}(\Omega)' \, : \, \langle f,1\rangle_{W^{1,p'}(\Omega)',W^{1,p'}(\Omega)} = 0 \right\} \rightarrow L^p(\Omega),
\end{align*}
such that 
\begin{align*}
\|\B f\|_{L^p(\Omega)} \le C \|f\|_{W^{1,p}(\Omega)'}.
\end{align*}

For the restriction operator we use the operator constructed in the proof of \cite[Proposition 2.2]{Lu-Schwarzacher}, which is defined for perforated domains such that $\Omega \setminus \oeps$ is connected and fulfills additional important properties compared to standard restriction operators defined in the literature, see for example \cite{tartar1980incompressible} and \cite{allaire1989homogenization}.  First of all, for $u \in W_0^{1,q}(\Omega)^3$ with $q \in (1,\infty)$ we have 
\begin{align*}
    \Reps(u) \in W_0^{1,q}(\oeps)^3, \qquad \vareps \|\nabla \Reps(u)\|_{L^q(\oeps)} \le C \left( \|u\|_{L^q(\Omega_\vareps)} + \vareps \|\nabla u\|_{L^{q}(\oeps)}\right) .
\end{align*}
Further, for $\div u = 0$ in $\Omega \setminus \oeps$ it holds that
\begin{align*}
\div \Reps(u) = \div u.
\end{align*}
While the previous properties are quite standard, the following is of particular importance for the homogenization of compressible fluids: let $g \in L^r(\oeps)^3$ for $r > \frac32$ with $\div g \in L^q(\oeps)$ for $q \in (1,\infty)$ and $g \cdot \nu =0 $ on $\partial \oeps$. In particular, we can extend $g$ by zero to a function $\tilde{g}$ having the same regularity. Then, it holds that
\begin{align}\label{ineq:Restriction_operator_div}
 \|\Reps(\mathcal{B}(\div \tilde{g}))\|_{L^r(\oeps)} \le C \|g\|_{L^r(\oeps)}.
\end{align}
Now, we can define the operator $\Beps$ via the composition $\Beps(\feps):= \Reps \circ  B \tfeps$ for $\feps\in L_0^p(\oeps)$, where $\tfeps$ denotes the zero extension of $\feps$ to the whole domain $\Omega$. Using the properties of the restriction operator  we obtain immediately $\div \Beps \feps =  \feps$. Further, we have the inequality 
\begin{align*}
\vareps \|\nabla \Beps \feps \|_{L^p(\oeps)} &=  \vareps \|\nabla \Reps (\B \tfeps)\|_{L^p(\oeps)} \le C \left( \|\B \tfeps\|_{L^p(\Omega)} + \vareps \|\nabla \B \tfeps\|_{L^p(\Omega)}\right)
\\
&\le C \left( \|\tfeps \|_{W^{1,p'}(\Omega)'} + \vareps \|\tfeps\|_{L^p(\Omega)} \right)
\\
&\le C \left( \|\feps\|_{W^{1,p'}(\oeps)'} + \vareps \|\feps\|_{L^p(\oeps)} \right).
\end{align*}
In summary, we obtain the following result.
\begin{lemma}\label{lem:Bogovskii_Beps}
Let $p\in (1,\infty)$. There exists a linear and bounded operator  $\Beps : L_0^p(\oeps) \rightarrow W_0^{1,p}(\oeps)^3$ such that for all $\feps \in L_0^p(\oeps)$ it holds that
\begin{align*}
\div \Beps \feps = \feps
\end{align*}
and
\begin{align}\label{ineq:Bogovski_lemma}
\vareps \|\nabla \Beps \feps \|_{L^p(\oeps)} \le C \left( \|\feps\|_{W^{1,p'}(\oeps)'} + \vareps \|\feps\|_{L^p(\oeps)} \right)
\end{align}
with a constant $C>0$ independent of $\vareps$.
\end{lemma}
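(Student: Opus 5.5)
The statement to prove is Lemma \ref{lem:Bogovskii_Beps}. Almost all of it has been assembled in the discussion just before it, so the proof consists of checking the construction $\Beps := \Reps \circ \B \circ (\text{zero extension})$ step by step.

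\emph{Well-definedness and the divergence identity.} Take $\feps \in L_0^p(\oeps)$ and let $\tfeps$ be its zero extension to $\Omega$. Then $\tfeps \in L^p_0(\Omega)$, since the mean value is preserved. The standard Bogovskii operator gives $\B\tfeps \in W_0^{1,p}(\Omega)^3$ with $\div \B\tfeps = \tfeps$. In particular $\div \B\tfeps = 0$ on $\Omega\setminus\oeps$. By the stated property of the restriction operator of \cite{Lu-Schwarzacher}, this yields $\Reps(\B\tfeps)\in W_0^{1,p}(\oeps)^3$ and
\[
\div \Reps(\B\tfeps) = \div \B\tfeps = \feps \quad \text{in } \oeps .
\]
Linearity is clear, because each map in the composition is linear. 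Boundedness for fixed $\vareps$ follows from the estimate in the next step, or simply from the $L^p$ bounds of $\B$.

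\emph{The uniform estimate.} I apply the restriction estimate with $q=p$ and $u = \B\tfeps$:
\[
\vareps\|\nabla\Beps\feps\|_{L^p(\oeps)} \le C\bigl(\|\B\tfeps\|_{L^p(\Omega)} + \vareps\|\nabla \B\tfeps\|_{L^p(\Omega)}\bigr).
\]
The second term is bounded by $C\vareps\|\tfeps\|_{L^p(\Omega)} = C\vareps\|\feps\|_{L^p(\oeps)}$. The first is bounded by $C\|\tfeps\|_{W^{1,p'}(\Omega)'}$, using the extension of $\B$ to mean-zero functionals in $W^{1,p'}(\Omega)'$; the compatibility condition $\langle\tfeps,1\rangle=0$ holds. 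Finally I need
\[
\|\tfeps\|_{W^{1,p'}(\Omega)'} \le \|\feps\|_{W^{1,p'}(\oeps)'}.
\]
This holds by duality: for $\varphi\in W^{1,p'}(\Omega)$ we have $\int_\Omega \tfeps\varphi = \int_{\oeps}\feps\,\varphi|_{\oeps}$, and the restriction $\varphi|_{\oeps}$ has $W^{1,p'}(\oeps)$-norm at most that of $\varphi$.

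\emph{Main point of care.} The only genuinely nontrivial ingredients are taken from the literature: the $\vareps$-uniform restriction operator with the divergence-preserving property, and the negative-norm estimate for $\B$. The step I would double-check is that the constant in the restriction estimate is independent of $\vareps$ and valid for every $p\in(1,\infty)$. This uses $\vareps^{-1}\in\N$ and the cell-wise construction on $\vareps(Y+k)$ with a rescaled fixed-cell operator, which requires that $Y_s$ is strictly contained in $Y$.
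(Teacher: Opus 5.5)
Your proposal is correct and matches the paper's proof: $\Beps := \Reps\circ\B\circ(\text{zero extension})$, the divergence-preserving property of $\Reps$ (since $\div \B\tfeps = 0$ on $\Omega\setminus\oeps$) gives $\div\Beps\feps=\feps$, and the estimate follows by chaining three inequalities. These are the uniform restriction estimate, the bound $\|\B\tfeps\|_{L^p(\Omega)}\le C\|\tfeps\|_{W^{1,p'}(\Omega)'}$ for mean-zero data, and $\|\tfeps\|_{W^{1,p'}(\Omega)'}\le\|\feps\|_{W^{1,p'}(\oeps)'}$. The paper uses that last inequality without comment, and your duality argument via restricting test functions to $\oeps$ supplies the justification.
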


\begin{remark}\label{rem:Bogovskii}
Under the assumptions in Lemma \ref{lem:Bogovskii_Beps}, we can also consider the operator $\Beps$ as an operator from $L^p(\oeps)$ (without mean-value zero condition) to $W_0^{1,p}(\oeps)^3$ such that for every $\feps \in L^p(\oeps)$ it holds that
\begin{align*}
\div\Beps \feps = \feps - \frac{1}{|\oeps|}\int_{\oeps} \feps dx,
\end{align*}
and the estimate  $\eqref{ineq:Bogovski_lemma}$ is valid. More precisely, we define  
\begin{align*}
\tBeps \feps := \Beps \bigg( \underbrace{\feps - \frac{1}{|\oeps|}\int_{\oeps} \feps dx}_{:= \bar{f}_{\vareps}}\bigg).
\end{align*}
It is obvious that
\begin{align*}
\|\bar{f}_{\vareps}\|_{L^p(\oeps)} \le C \|\feps \|_{L^p(\oeps)}.
\end{align*}
Further, we have for every $\phieps \in W^{1,p'}(\oeps)$ that
\begin{align*}
\langle \bar{f}_{\vareps}, \phieps \rangle_{W^{1,p'}(\oeps)',W^{1,p'}(\oeps)} &= \int_{\oeps} \feps \phieps dx + \frac{1}{|\oeps|} \int_{\oeps} \feps dx \cdot \int_{\oeps} \phieps dx 
\\
&\le \|\feps\|_{W^{1,p'}(\oeps)'} \|\phieps\|_{W^{1,p'}(\oeps)}  + C\|\feps\|_{W^{1,p'}(\oeps)'} \|1\|_{W^{1,p'}(\oeps)} \|\phieps \|_{L^1(\oeps)}
\\
&\le C\|\feps\|_{W^{1,p'}(\oeps)'} \|\phieps\|_{W^{1,p'}(\oeps)}.
\end{align*}
Hence, we have 
\begin{align*}
\|\bar{f}_{\vareps}\|_{W^{1,p'}(\oeps)'} \le C \|\feps\|_{W^{1,p'}(\oeps)'}.
\end{align*}

\end{remark}

\begin{corollary}\label{cor:Bogovskii} Let $p \in (1,\infty)$.
For every $f_{\vareps} \in L^p(\oeps)$ we have the following inequality:
\begin{align*}
\|\Beps(f_{\vareps})\|_{L^p(\oeps)} + \vareps \|\nabla \Beps(f_{\vareps})\|_{L^p(\oeps)} &\le C\left( \|f_{\vareps}\|_{L^{\underline{p}}(\oeps)} + \vareps \|\feps\|_{L^p(\oeps)} \right) \le C \|f_{\vareps}\|_{L^p(\oeps)}.
\end{align*}

For $g_{\vareps} \in L^r(\oeps)^3$ with $\div g_{\vareps} \in L^p(\oeps)$ and $g_{\vareps} \cdot \nu = 0$ on $\partial\oeps$ it holds that
\begin{align*}
    \|\Beps(\div g_{\vareps})\|_{L^r(\oeps)} \le C \|g_{\vareps}\|_{L^r(\oeps)}.
\end{align*}
Further, let $n=3$ and $p\in [2,6]$. With $\theta := \frac{3p - 6}{2p} \in [0,1]$ for every $f_{\vareps}\in L^p(\oeps)$ it holds that
\begin{align*}
\vareps \|\Beps(f_{\vareps})\|_{L^p(\oeps)}  \le C \vareps^{1-\theta}\left( \|f_{\vareps}\|_{L^{\frac65}(\oeps)} + \vareps \|f_{\vareps}\|_{L^2(\oeps)} \right).
\end{align*}
\end{corollary}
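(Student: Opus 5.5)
The three assertions are all consequences of Lemma \ref{lem:Bogovskii_Beps}, Remark \ref{rem:Bogovskii}, the restriction estimate \eqref{ineq:Restriction_operator_div} and the Poincar\'e inequality of Lemma \ref{lem:Poincare_inequality}. I would prove them in the order stated, always working with $\Beps = \tBeps$ in the sense of Remark \ref{rem:Bogovskii}, so that no mean-value condition on $\feps$ is needed. One uniformity point recurs throughout. Every embedding or interpolation constant is obtained after extending by zero, either to $\Omega$ (which is fixed) or to $\R^3$. This is legitimate because $\Beps \feps \in W_0^{1,p}(\oeps)^3$. Hence all constants are independent of $\vareps$.

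\textbf{First estimate.}
\begin{enumerate}[label=(\alph*)]
\item Since $\Beps\feps$ vanishes on $\geps$, Lemma \ref{lem:Poincare_inequality} gives $\|\Beps\feps\|_{L^p(\oeps)} \le C\vareps\|\nabla\Beps\feps\|_{L^p(\oeps)}$. So it suffices to bound $\vareps\|\nabla\Beps\feps\|_{L^p(\oeps)}$.
\item Apply \eqref{ineq:Bogovski_lemma}, which holds without the mean-zero condition by Remark \ref{rem:Bogovskii}.
\item Estimate the dual norm $\|\feps\|_{W^{1,p'}(\oeps)'}$ by duality and the Sobolev embedding $W^{1,p'}(\oeps)\hookrightarrow L^{(\underline p)'}(\oeps)$. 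Here $\underline p$ is the exponent dual to the Sobolev exponent of $p'$, i.e.\ $\frac{1}{\underline p} = \frac1p + \frac13$, or any $\underline p>1$ when $p'\ge 3$.
\item The constant in this embedding is uniform in $\vareps$. I would justify this with a uniform extension operator $W^{1,p'}(\oeps)\to W^{1,p'}(\Omega)$ (as in \cite{Acerbi1992}) followed by the embedding on the fixed domain $\Omega$. I expect this to be the only point needing care.
\item The final inequality $\le C\|\feps\|_{L^p(\oeps)}$ is H\"older's inequality on the bounded set $\oeps\subset\Omega$, since $\underline p\le p$.
\end{enumerate}

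\textbf{Second estimate.}
\begin{enumerate}[label=(\alph*)]
\item Because $g_{\vareps}\cdot\nu=0$ on $\partial\oeps$, the divergence theorem gives $\int_{\oeps}\div g_{\vareps}\,dx=0$. Hence $\div g_{\vareps}\in L^p_0(\oeps)$.
\item The zero extension $\tilde g_{\vareps}$ satisfies $\div\tilde g_{\vareps}=\widetilde{\div g_{\vareps}}$ in $\mathcal D'(\Omega)$, again using the vanishing normal trace.
\item By definition, $\Beps(\div g_{\vareps}) = \Reps(\B(\div\tilde g_{\vareps}))$. The claim is then exactly \eqref{ineq:Restriction_operator_div}, which applies for the relevant range $r>\frac32$.
\end{enumerate}

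\textbf{Third estimate.} Write $w_{\vareps}:=\Beps\feps\in W_0^{1,2}(\oeps)^3$ (zero traces) and extend it by zero to $\R^3$.
\begin{enumerate}[label=(\alph*)]
\item The Gagliardo--Nirenberg inequality on $\R^3$, with exponent $\theta=3(\frac12-\frac1p)=\frac{3p-6}{2p}$, gives
\begin{align*}
\|w_{\vareps}\|_{L^p}\le C\|w_{\vareps}\|_{L^2}^{1-\theta}\|\nabla w_{\vareps}\|_{L^2}^{\theta}.
\end{align*}
\item The Poincar\'e inequality of Lemma \ref{lem:Poincare_inequality} gives $\|w_{\vareps}\|_{L^2}\le C\vareps\|\nabla w_{\vareps}\|_{L^2}$. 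Combining with (a),
\begin{align*}
\vareps\|w_{\vareps}\|_{L^p}\le C\vareps^{1-\theta}\,\vareps\|\nabla w_{\vareps}\|_{L^2(\oeps)}.
\end{align*}
\item Apply \eqref{ineq:Bogovski_lemma} with exponent $2$, and use $L^{6/5}(\oeps)\hookrightarrow H^1(\oeps)'$, which is the first estimate with $\underline 2=\frac65$. This yields $C\vareps^{1-\theta}\big(\|\feps\|_{L^{6/5}(\oeps)}+\vareps\|\feps\|_{L^2(\oeps)}\big)$, as claimed.
\end{enumerate}

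The main obstacle is not any single computation but the $\vareps$-uniformity of the constants. In the third estimate this is settled by the zero extension to $\R^3$. In the first, the embedding $W^{1,p'}(\oeps)\hookrightarrow L^{(\underline p)'}(\oeps)$ needs the uniform extension operator.
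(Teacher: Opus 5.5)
Your proposal is correct and follows essentially the same route as the paper: Poincar\'e, Lemma~\ref{lem:Bogovskii_Beps}, Remark~\ref{rem:Bogovskii} and the dual Sobolev embedding for the first bound; the restriction estimate \eqref{ineq:Restriction_operator_div} for the second; and Gagliardo--Nirenberg, Poincar\'e and the case $p=2$ (with $\underline{2}=\frac65$) for the third. Your added care about $\vareps$-uniformity only makes explicit points the paper leaves implicit: zero extension to $\R^3$ removes the lower-order term in Gagliardo--Nirenberg, and a uniform extension operator handles $W^{1,p'}(\oeps)\hookrightarrow L^{(\underline{p})'}(\oeps)$. The extension operator could also be avoided by applying the Sobolev embedding on the fixed domain $\Omega$ to the zero extension $\tilde f_\vareps$, at the intermediate step of the proof of Lemma~\ref{lem:Bogovskii_Beps}.
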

\begin{proof}
For $p>\frac{n}{n-1}$ the first inequality is a direct consequence of the Poincar\'e inequality in Lemma \ref{lem:Poincare_inequality}, Lemma \ref{lem:Bogovskii_Beps} and Remark \ref{rem:Bogovskii}, as well as the embedding $L^{\tilde{q}}(\oeps) \hookrightarrow W^{1,q}(\oeps)' $ for $\tilde{q}:= \frac{nq}{nq - n -q }$ for $q > \frac{n}{n-1}$. We also used that $\tilde{p'} = \frac{np}{n+p} = \underline{p}$. The case $p \le \frac{n}{n-1}$ can be treated in a similar way, where we notice that $L^q(\oeps) \hookrightarrow W^{1,p'}(\oeps)'$ for every $q>1$.
The inequality for $\Beps(\div g_{\vareps})$ is just $\eqref{ineq:Restriction_operator_div}$.

It remains to show the last inequality. Using the Gagliardo-Nirenberg interpolation and the Poincar\'e inequality from Lemma \ref{lem:Poincare_inequality} (see also $\eqref{ineq:Gagliardo_Nirenberg_ueps}$), we obtain for $f_{\vareps} \in L^p(\oeps)$
\begin{align*}
\|\Beps(f_{\vareps}) 
\|_{L^p(\oeps)} &\le C \left( \|\nabla \Beps (f_{\vareps}) \|_{L^2(\oeps)}^{\theta} \|\Beps(f_{\vareps})\|_{L^2(\oeps)}^{1 -\theta} + \|\Beps(f_{\vareps})\|_{L^1(\oeps)} \right)
\\
&\le C  \vareps^{1-\theta}  \|\nabla \Beps(f_{\vareps})\|_{L^2(\oeps)} .
\end{align*}
Using again Lemma \ref{lem:Bogovskii_Beps} and Remark \ref{rem:Bogovskii} together with the embedding $H^1(\oeps) \hookrightarrow L^6(\oeps)$, we obtain
\begin{align*}
\vareps \|\Beps(f_{\vareps})\|_{L^p(\oeps)} &\le C \vareps^{1-\theta}\left( \|f_{\vareps}\|_{H^1(\oeps)'} + \vareps \|f_{\vareps}\|_{L^2(\oeps)}\right) 
\\
&\le C \vareps^{1-\theta}\left( \|f_{\vareps}\|_{L^{\frac65}(\oeps)} + \vareps \|f_{\vareps}\|_{L^2(\oeps)} \right).
\end{align*}
\end{proof}

\subsection{A convergence result}

The next lemma establishes a strong convergence result for the product of a fixed integrable function and a uniformly bounded sequence that converges strongly to zero in a negative order Sobolev space. Although such results are classical, we provide a proof.

\begin{lemma}\label{lem:strong_convergence_product}
Let $s\in (1,\infty)$ and $f \in L^1((0,T), L^{(s^{\ast})'}(\Omega))$ with $\frac{1}{s^{\ast}} = \frac{1}{s} - \frac13$ and $g_n \in L^{\infty}((0,T)\times \Omega)$ such that
\begin{align*}
\|g_n \|_{L^{\infty}((0,T)\times \Omega)} \le C_g
\end{align*}
for a constant $C_g>0$. Further, we assume that $g_n \rightarrow 0$ in $L^1((0,T),W^{1,s}(\Omega)')$. Then it holds that
\begin{align*}
fg_n \rightarrow 0 \qquad\mbox{strongly in } L^1((0,T),W^{1,s}(\Omega)').
\end{align*}

\end{lemma}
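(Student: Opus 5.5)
The plan is to prove the claim by duality, exploiting the uniform $L^\infty$ bound on $g_n$ to compensate for the fact that $f$ is merely integrable. Recall that $L^{(s^\ast)'}(\Omega) \hookrightarrow W^{1,s}(\Omega)'$, since $W^{1,s}(\Omega)\hookrightarrow L^{s^\ast}(\Omega)$; the same embedding shows that $f g_n \in L^1((0,T),W^{1,s}(\Omega)')$ is well defined with
\begin{align*}
\|f g_n\|_{L^1((0,T),W^{1,s}(\Omega)')} \le C\, C_g \|f\|_{L^1((0,T),L^{(s^\ast)'}(\Omega))}.
\end{align*}
This uniform bound suggests a density argument. For $\eta>0$ I choose $f_\eta \in C^\infty_0((0,T)\times\Omega)$ (or at least $f_\eta \in L^\infty((0,T),W^{1,\infty}(\Omega))$) with $\|f - f_\eta\|_{L^1((0,T),L^{(s^\ast)'}(\Omega))} \le \eta$; this is possible because $(s^\ast)'<\infty$. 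I then split
\begin{align*}
\|f g_n\|_{L^1(W^{1,s}{}')} \le \|(f-f_\eta) g_n\|_{L^1(W^{1,s}{}')} + \|f_\eta g_n\|_{L^1(W^{1,s}{}')} \le C C_g \eta + \|f_\eta g_n\|_{L^1(W^{1,s}{}')}.
\end{align*}

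The key step is to show that multiplication by the smooth function $f_\eta$ is bounded on $W^{1,s}(\Omega)'$ with a constant uniform in time. For $\phi\in W^{1,s}(\Omega)$ and a.e.\ $t$ we have
\begin{align*}
\langle f_\eta(t) g_n(t),\phi\rangle = \langle g_n(t), f_\eta(t)\phi\rangle \le \|g_n(t)\|_{W^{1,s}{}'}\|f_\eta(t)\phi\|_{W^{1,s}} \le \|f_\eta\|_{L^\infty(W^{1,\infty})}\|g_n(t)\|_{W^{1,s}{}'}\|\phi\|_{W^{1,s}}.
\end{align*}
Here I use that $g_n(t)\in L^\infty$ acts on $W^{1,s}$ through the $L^2$ pairing, which is consistent with its identification as an element of the dual. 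Integrating in time gives
\begin{align*}
\|f_\eta g_n\|_{L^1(W^{1,s}{}')}\le C_\eta \|g_n\|_{L^1(W^{1,s}{}')}\to 0.
\end{align*}
Hence $\limsup_n \|f g_n\|_{L^1(W^{1,s}{}')}\le C C_g \eta$, and letting $\eta\to0$ concludes the proof.

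I expect the only delicate point to be the approximation step. Approximating $f$ in the Bochner space $L^1((0,T),L^{(s^\ast)'}(\Omega))$ by functions that are smooth in $x$ and bounded in time is standard: one uses simple functions in time with values in $C_0^\infty(\Omega)$. The remaining care is in checking that the error bound uses only the $L^\infty$ bound on $g_n$ and not any convergence of $g_n$. If $s\ge3$, the exponent $s^\ast$ is to be interpreted as any large finite number, or as $\infty$ when $s>3$, and the same argument goes through.
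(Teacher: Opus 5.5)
Your proposal is correct and follows the paper's proof essentially step by step. You approximate $f$ by a smooth $f_\delta$ in $L^1((0,T),L^{(s^{\ast})'}(\Omega))$, bound $(f-f_\delta)g_n$ using only $\|g_n\|_{L^\infty}\le C_g$ and the embedding $L^{(s^{\ast})'}(\Omega)\hookrightarrow W^{1,s}(\Omega)'$, and bound $f_\delta g_n$ by $C\|f_\delta\|_{W^{1,\infty}}\|g_n\|_{W^{1,s}(\Omega)'}$ via the multiplier estimate. Your version is slightly more careful than the written proof: it keeps track of the embedding constant in the error term and comments on the case $s\ge 3$.
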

\begin{proof}
By the Sobolev-embedding we have $f\in L^1((0,T),W^{1,s}(\Omega)')$. For given $\delta >0$ there exists $f_{\delta} \in C_0^{\infty}((0,T)\times \Omega)$, such that 
\begin{align*}
\|f - f_{\delta}\|_{L^1((0,T),L^{(s^{\ast})'}(\Omega))} \le \frac{\delta}{2C_g}. 
\end{align*}
Further, we have
\begin{align*}
\|fg_n \|_{W^{1,s}(\Omega)'} \le \|(f - f_{\delta}) g_n \|_{W^{1,s}(\Omega)'} + \|f_{\delta} g_n \|_{W^{1,s}(\Omega)'}.
\end{align*}
For the second term we have 
\begin{align*}
\|f_{\delta} g_n \|_{W^{1,s}(\Omega)'} &= \sup_{\phi \in W^{1,p}(\Omega), \, \|\phi\|_{W^{1,p}(\Omega)} \le 1} \int_{\Omega} f_{\delta} g_n \phi dx 
\\
&\le \sup_{\phi \in W^{1,s}(\Omega), \, \|\phi\|_{W^{1,p}(\Omega)} \le 1} \|g_n \|_{W^{1,p}(\Omega)'} \|f_{\delta}\|_{W^{1,\infty}(\Omega)} 
\\
&\le C_0 \|g_n\|_{W^{1,s}(\Omega)'}
\end{align*}
with a constant $C_0>0$ independent of $n$ (only depending on the choice of $f_{\delta}$). Further, we have
\begin{align*}
\|(f-f_{\delta}) g_n\|_{W^{1,p}(\Omega)'} \le \|g_n\|_{L^{\infty}(\Omega)} \|f - f_{\delta}\|_{L^{(s^{\ast})'}(\Omega)} \le C_g \|f - f_{\delta}\|_{L^{(s^{\ast})'}(\Omega)}.
\end{align*}
Altogether, we have 
\begin{align*}
\|fg_n\|_{L^1((0,T),W^{1,s}(\Omega)')} \le C_0 \|g_n\|_{L^1((0,T),W^{1,s}(\Omega)'} + \frac{\delta}{2} < \delta
\end{align*}
for all $n$ large enough. This gives the desired result.
\end{proof}

\end{appendix}

\section*{Statements and Declarations}

\subsection*{Data Availability}

Data sharing is not applicable to this article as no new data were generated or analyzed
during the current study.

\subsection*{Conflict of interest}

The authors  declare that there is no conflict of interest.

\bibliographystyle{abbrv}
\bibliography{literature}

\end{document}